\documentclass[12pt,reqno,english]{amsart}
\usepackage{fullpage}
\usepackage{amsfonts}
\usepackage{amssymb}
\usepackage[T1]{fontenc}
\usepackage[utf8]{inputenc}
\usepackage{babel}
\usepackage{stmaryrd}
\usepackage{comment}
\usepackage{xcolor}
\usepackage[inline]{enumitem}

\usepackage{csquotes}
\usepackage{newtx} 
\usepackage{amsmath}
\usepackage{url}
\usepackage{tikz-cd}
\usepackage{mathtools}
\usepackage{multicol}
\usepackage{graphicx}
\usepackage{amsmath,calligra,mathrsfs}
\vfuzz=2pt

\usepackage{xurl}
\usepackage[maxbibnames=10,style=ext-alphabetic,backend=bibtex,url=false,eprint=false,isbn=false,articlein=false]{biblatex}
\addbibresource{nonbasicrig.bib}

\usepackage%
 [pdfusetitle,
 bookmarksdepth=subsection,
 unicode]%
 {hyperref}

\numberwithin{equation}{section}

\newtheorem{Theorem}[equation]{Theorem}
\newtheorem{Theoremx}{Theorem} %
	
\newtheorem{Corollary}[equation]{Corollary}
\newtheorem{Lemma}[equation]{Lemma}
\newtheorem{Proposition}[equation]{Proposition}

\theoremstyle{definition}
\newtheorem{Example}[equation]{Example}
\newtheorem{Definition}[equation]{Definition}

\newtheorem{Conjecture}[equation]{Conjecture}
\newtheorem{Construction}[equation]{Construction}
\newtheorem{Remark}[equation]{Remark}

\newtheorem{Question}[equation]{Question}

    \newtheorem{Notation}[equation]{Notation}
  \newtheorem{Warning}[equation]{Warning}

\newcommand{\SL}{\textnormal{SL}}
\newcommand{\Sc}{\textnormal{sc}}
\newcommand{\psc}{\textnormal{(sc)}}

\newcommand{\Inn}{\textnormal{Inn}}
\newcommand{\ov}{\overline}
\newcommand{\wt}{\widetilde}
\newcommand\restr[2]{{
  \left.\kern-\nulldelimiterspace %
  #1 %
  \vphantom{\big|} 
  \right|_{#2} 
  }}

\newcommand{\Hom}{\textnormal{Hom}}
\DeclareMathOperator\Irr{Irr}
\newcommand*{\sheafhom}{\textnormal{H}\kern -.5pt om}

\newcommand{\Res}{\textnormal{Res}}
\DeclareMathOperator{\sHom}{\mathscr{H}\text{\kern -3pt {\calligra\large om}}\,}
 \raggedbottom

\newcommand{\Z}{\mathbb{Z}}
\newcommand{\Q}{\mathbb{Q}}

\newcommand{\cE}{\mc{E}}
\newcommand{\cC}{\mc{C}}
\newcommand{\GL}{\textnormal{GL}}
\newcommand{\C}{\mathbb{C}}

\newcommand\Aut{\textnormal{Aut}}
\newcommand\Gal{\textnormal{Gal}}
\newcommand\PGL{\textnormal{PGL}}
\newcommand\cal{\mathcal}
\newcommand\bbC{\mathbb{C}}
\newcommand\bbD{\mathbb{D}}
\newcommand\bbF{\mathbb{F}}
\newcommand\bbG{\mathbb{G}}

\newcommand\bbP{\mathbb{P}}
\newcommand\bbQ{\mathbb{Q}}
\newcommand\bbZ{\mathbb{Z}}
\newcommand\scrB{\mathscr{B}}
\newcommand\scrM{\mathscr{M}}
\newcommand\scrT{\mathscr{T}}

\newcommand\tn{\textnormal}

\newcommand{\mc}{\mathscr} %

\newcommand\onto\twoheadrightarrow
\newcommand\into\hookrightarrow
\newcommand\longisoto{\mathrel{\tilde\longrightarrow}}
\newcommand\isoto{\mathrel{\tilde\to}}

\newcommand\smat[4]{\big(\begin{smallmatrix}#1&#2\\#3&#4\end{smallmatrix}\big)}

\newcommand\Greg{{G\textnormal{-reg}}}
\newcommand\Gphireg{{(G,\phi)\textnormal{-reg}}}
\newcommand\actson\curvearrowright

\newcommand\defeq{:=}

\begin{document}

\title[NBrigv22.7.26]{Non-basic rigid packets for discrete $L$-parameters}

\author{Peter Dillery}
\address{University of Bonn, Mathematics Institute}
\email{dillery@math.uni-bonn.de}
\thanks{The first author was supported by the Brin postdoctoral fellowship}

\author{David Schwein}
\address{University of Utah, Department of Mathematics}
\email{david.schwein@utah.edu}

\begin{abstract}
This article initiates the study of non-basic rigid inner forms over $p$-adic local fields,
extending the basic theory developed by Kaletha.
Motivated by the recent work of Bertoloni Meli--Oi
on the $B(G)$-parametrization of the local Langlands conjectures,
our main application is to extend the basic rigid refined local Langlands conjectures
for a discrete $L$-parameter $\phi$ of a quasi-split connected reductive group $G$.
The packets of our extended construction are Weyl orbits of representations
of inner forms of twisted Levi subgroups $N$ of $G$ for which $\phi$
factors through a member of the canonical $L$-embeddings
$^{L}N_{\pm} \to \prescript{L}{}G$ constructed by Kaletha.
\end{abstract}

\maketitle

\tableofcontents

\section{Introduction}

Let $G$ be a reductive group over a nonarchimedean local field~$F$.
Elliptic twisted Levi subgroups of $G$
are a rich source of discrete $L$-parameters~$\phi$ of~$G$
and discrete series representations of~$G(F)$,
objects that are predicted to match under the local Langlands conjectures.
Perhaps the most prominent example of this phenomenon is
Yu's construction of supercuspidal representations \cite{Yu01}
and Kaletha's parametrization of most of them \cite{Kaletha21}.
The goal of this paper is to show that the rigid refined local Langlands
conjectures imply a new, extended correspondence
that organizes representations of all twisted Levi subgroups $N$ of $G$
that yield $\phi$ by generalized Langlands functoriality,
that is, for which $\phi$ factors through an $L$-embedding $^{L}N_\pm \to \prescript{L}{}G$.
Our main technical accomplishment 
is to develop analogues of the Newton and Kottwitz maps on $B(G)$
for the set $H^1_\tn{reg}(\cE_\tn{rig},G)$ of regular cohomology classes of the rigid gerbe.

\subsection{Background}
The local Langlands conjectures predict a finite-to-one map
from irreducible (smooth) representations of $G(F)$
on $\C$-vector spaces to $\widehat{G}$-conjugacy classes of $L$-parameters
$\phi\colon W_{F} \times \SL_{2} \to \prescript{L}{}G \defeq \widehat{G} \rtimes W_{F}$,
where $\widehat{G}$ is a Langlands dual group of $G$ and $W_{F}$ is the Weil group of $F$.
The conjectures further predict this map to satisfy numerous desiderata
(see for instance~\cite[\S 6.1]{Taibi22}). 

A refined local Langlands correspondence is a parametrization
of the fibers of this finite-to-one map using precise auxiliary data associated to $\phi$.
When $G$ is quasi-split, the conjectural parametrization asserts that
the fiber over the conjugacy class of an $L$-parameter~$\phi$,
denoted by $\Pi_{\phi}(G)$,
is in bijection with irreducible representations of the finite group
$\pi_{0}(Z_{\widehat{G}}(\phi)/Z(\widehat{G})^{\Gamma})$.
When $G$ is not quasi-split,
Adams, Barbasch, and Vogan discovered, originally for $F=\mathbb{R}$ (see~\cite{ABV92}),
that one can group together representations of inner twists of $G$,
which all have the same set of $L$-parameters,
and they gave a precise parametrization of this \textit{compound $L$-packet}.
Following their philosophy, in the rest of the introduction we take $G$ to be quasi-split.

A subtle question in the study of compound $L$-packets
is when to identify representations of two inner twists
$(G_{1}, \psi_{1}, \pi_{1})$ and $(G_{2}, \psi_{2}, \pi_{2})$,
where $\psi_i\colon G_{\ov{F}} \to G_{i,\ov{F}}$
is a choice of twisting isomorphism and $\pi_{i}$ is a representation of $G_{i}(F)$.
As Vogan observed in \cite{Vogan93},
the natural equivalence of inner twists
is unsatisfactory because there are automorphisms of inner twists
which identify non-isomorphic representations.
To overcome this obstacle, Vogan chose a $1$-cocycle $z_{\psi}$
valued in $G$ and lifting $\bar{z}_{\psi} \in Z^{1}(F, G_{\textnormal{ad}})$,
the cocycle associated to the inner twisting $\psi$,
and worked with triples $(G', z_{\psi}, \pi)$ instead,
insisting that isomorphisms of such triples preserve the cocycle up to a coboundary.
This correction is not entirely satisfactory either
because a lift $z_\psi$ need not exist for every inner form,
meaning that not all reductive groups can be reached.

To rectify these shortcomings, it has proved useful to take $z_{\psi}$
to be a cocycle not for the absolute Galois group~$\Gamma$ of~$F$
but for a certain more general type of group~$\mc E$
called by Langlands and Rapoport a \textit{Galois gerbe} \cite[\S~2]{LanglandsRapoport87}.
Such a group~$\mc E$ is, by definition, an extension
\[
1 \to A(\overline F) \to \mc E \to \Gamma \to 1
\]
of~$\Gamma$ by a commutative group scheme~$A$,
called the \textit{band}.
Then $\mc E$ acts on $G(\ov{F})$ through the inflation of the Galois action,
allowing one to form the nonabelian group cohomology set
$H^1(\cE,G)\defeq H^1(\cE,G(\ov{F}))$.
There are three standard choices for~$\mc{E}$,
each yielding a flavor of the local Langlands conjectures:
the trivial Galois gerbe $\Gamma$, yielding pure inner forms;
the isocrystal gerbe $\mc E_\tn{iso}$, yielding extended pure inner forms;
and the rigid gerbe $\mc E_\tn{rig}$, yielding rigid inner forms.
See \cite[\S~6.3]{Taibi22} for a discussion of all three of these approaches.
The third of these will be the focus of this paper,
but first we discuss the isocrystal gerbe.

\subsubsection*{The algebraic structure of $B(G)$}
Suppose $G$ is quasi-split.
When $\mc{E} = \mc{E}_\tn{iso}$ is the isocrystal gerbe,
the band is the pro-torus $\bbD$ with character group~$\bbQ$.
In other words, $\mathbb D\defeq\varprojlim_n \bbG_m$
with the inverse limit taken over nonnegative integers with transition map for
$n\mid m$ the $(m/n)$th-power map.
Define $B(G)$ to be the classes in $H^1(\mc E_\tn{iso},G)$
represented by a cocycle $b$ whose restriction $b|_\bbD$ to $\bbD(\ov{F})$ is algebraic,
and define the set of basic classes
$B(G)_{\tn{bas}}\subseteq B(G)$ to be those classes whose image in
$H^{1}(\cE_{\tn{iso}}, G_{\tn{ad}})$ is contained in $H^{1}(F, G_{\tn{ad}})$.

Given an (algebraic) cocycle $b\in Z^1(\mc E_\tn{iso},G)$
with $b|_\bbD$ is defined over $F$,
we can form the centralizer $Z_G(b|_\bbD)$,
a Levi subgroup of~$G$ whose stable conjugacy class
is independent of the choice of $b$ within its cohomology class.
Refining this construction,
we can twist the quasi-split inner form of $Z_G(b|_\bbD)$
by the image of $b$ in $H^1(F,G_\tn{ad})$,
forming the \textit{extended pure inner form}~$G_b$.
If $b$ is basic, or equivalently, if $b|_\bbD$ factors through $Z(G)$,
then $G_b$ is an inner twist of~$G$. 

One also has the Newton and Kottwitz maps
\begin{equation}\label{bgnewtkott}
\nu\colon B(G) \to \pi_1(G)_\Gamma,\qquad
\kappa\colon B(G)\to (X_*(T)_\bbQ^+)^\Gamma,
\end{equation}
where $T$ is a minimal Levi subgroup of~$G$.

\subsubsection*{The $B(G)$ local Langlands conjectures}
Kottwitz has conjectured \cite[Conjecture~4.1]{Kaletha14} that there is a bijection
\begin{equation}\label{BGparam}
\iota_\mathfrak{w} \colon
\bigsqcup_{b \in B(G)_{\tn{bas}}} \Pi_{\phi}(G_{b}) \longrightarrow
\Irr(Z_{\widehat{G}}(\phi)/Z_{\widehat{G}_{\tn{der}}}^\circ(\phi)),
\end{equation}
depending on a choice of Whittaker datum $\mathfrak{w}$ for~$G$.
When the center of $G$ is connected,
so that the map $B(G)_{\tn{bas}} \to H^{1}(F, G_{\tn{ad}})$
is surjective, the bijection parametrizes $\Pi_\phi(G')$
for any inner twist $G'$ of~$G$.
In fact, there are many such parametrizations,
since different elements $b \in B(G)_{\tn{bas}}$
can yield the same inner form $G_{b}$. 

The sets $B(G)$ and $B(G)_{\tn{bas}}$ also appear in the geometry of the space $\textnormal{Bun}_{G}$, the moduli stack of $G$-bundles on the Fargues--Fontaine curve, which underlies Fargues and Scholze's geometrization of the local Langlands correspondence \cite{FS21}. Namely, the topological space $|\textnormal{Bun}_G|$ is homeomorphic to $B(G)$ and contains $B(G)_{\tn{bas}}$ as the semistable locus. In light of this description and the role of $\textnormal{Bun}_G$ in the local Langlands correspondence, it is natural to seek to extend the conjectural bijection \eqref{BGparam} to all of $B(G)$.

This extension was carried out by Bertoloni Meli and Oi in \cite[Theorem 1.1]{BMOi23}. Given a basic local Langlands correspondence \eqref{BGparam} for $G$ and its Levi subgroups, they define for every $L$-parameter $\phi$ and every $b\in B(G)$ an $L$-packet $\Pi_\phi(G_b)$ and construct a bijection
\begin{equation}\label{BMObijection}
\bigsqcup_{b \in B(G)} \Pi_{\phi}(G_{b}) \xrightarrow{\iota_{\mathfrak{w}}} \Irr(S_\phi)
\qquad (S_\phi\defeq Z_{\widehat{G}}(\phi)).
\end{equation}
The extended parametrization has many advantages. For example, as explained in the introduction to \cite{BMOi23}, irreducible representations of $S_{\phi}$ correspond to coherent sheaves on the classifying stack $[\ast/S_{\phi}]$, which naturally embeds into $Z^{1}(W_{F}, \widehat{G})/\widehat{G}$, the stack of $L$-parameters of~$G$. As a result one expects the extended parametrization to provide a rough framework for relating the construction of Fargues and Scholze to the refined local Langlands conjectures. The Bertoloni Meli--Oi framework also allows for a generalization, \cite[Theorem 5.13]{BMOi23}, of the endoscopic character identities involving Levi subgroups of endoscopic groups.

\subsubsection*{Searching for twisted Levi subgroups}
When $\phi$ is a discrete $L$-parameter,
the Bertoloni Meli--Oi parametrization reduces
to the original basic conjecture~\eqref{BGparam}:
by definition of discreteness, $\phi$ does not factor through the
$L$-embedding associated to any proper Levi subgroup of~$G$.
For the study of discrete parameters,
it is much more convenient to work with \textit{twisted} Levi subgroups of~$G$,
those $F$-rational subgroups that become isomorphic to a Levi subgroup
of~$G_{\overline F}$ over~$\overline F$.

If $M \subseteq G$ is a twisted Levi subgroup then there is in general
no canonical embedding $^{L}M \to \prescript{L}{}G$.
To circumvent this difficulty, Kaletha constructed in \cite{Kaletha19b}
a double cover $M(F)_{\pm} \to M(F)$ and an $L$-group $^{L}M_{\pm}$
which does admit a canonical $\widehat{G}$-conjugacy class of embeddings
$^{L}M_{\pm} \to \prescript{L}{}G$.
For brevity we will refer to any such $L$-embedding as \textit{admissible}.
Although $\phi$ cannot factor through the $L$-embedding for a proper Levi subgroup,
it can, and often does, factor through the $L$-embedding
of a proper twisted Levi subgroup.
For example, when $p$ does not divide the order of the Weyl group of~$G$,
every semisimple discrete $L$-parameter factors through an admissible $L$-embedding
for an elliptic maximal torus.
These factorizations are key to Kaletha's construction
of torally wild supercuspidal $L$-packets \cite{Kaletha21}.

These developments, and the analogy with \cite{BMOi23}, lead us to ask:

\begin{Question} \label{mainQuestion}
Is there a version of the refined local Langlands conjectures which constructs, for every twisted Levi subgroup $N$ of $G$ such that $\phi$ factors through an admissible $L$-embedding $^{L}N_{\pm} \to \prescript{L}{}G$, a packet of representations of rigid inner forms of $N$? 
\end{Question}

\subsection{Results}
One goal of this paper is to propose a coarse answer to Question~\ref{mainQuestion}.
Our main object of study and tool towards this end is the cohomology set
$H^1_\tn{reg}(\cE_\tn{rig},G)$, which we will show has many formal similarities to $B(G)$
but seems better adapted to the study of discrete $L$-parameters.
Assume again that $G$ is quasi-split.

\subsubsection*{Non-basic classes for $\mc{E}_{\tn{rig}}$}
For $\mc E_\tn{rig}$, the band is the group
$u \defeq \varprojlim_{E/F,n} \Res_{E/F}(\mu_n)$,
where the projective limit is taken over finite Galois extensions $E/F$ and positive integers~$n$ and where the transition map for $(K/E,n\mid m)$ is the $(m/n)$th-power of the $K/E$ norm map. We can again define $H^1_\tn{alg}(\mc{E}_\tn{rig},G)$ as the classes whose restriction to the band is algebraic, and define the basic subset $H^1_\tn{bas}(\mc{E}_\tn{rig},G)$ as we defined $B(G)_\tn{bas}$, replacing $\mc{E}_\tn{iso}$ with $\mc{E}_\tn{rig}$. In this setting, the map $H^{1}_{\tn{bas}}(\mc{E}_{\tn{rig}}, G) \to H^{1}(F, G_{\tn{ad}})$ is always surjective.

In the same way that $b\in B(G)$ gave rise to a reductive $F$-group $G_b$, a class $[x]\in H^1_\tn{alg}(\mc{E}_\tn{rig},G)$ gives rise to a reductive $F$-group $G_{[x]}$: choosing a representative~$x$ of $[x]$ whose restriction to the band $x|_u\colon u_{\ov F}\to G_{\ov F}$ is defined over~$F$, we form the connected centralizer $Z_G^\circ(x|_u)$, then use $x$, which is automatically valued in $Z_{G}(x|_u)$, to construct the inner twist $G_{[x]}$ of the quasi-split inner form of $Z_G^\circ(x|_u)$.

One surprising feature of this construction is that there are strange classes $[x]\in H^1_\tn{alg}(\mc{E},G)$ for which $G_{[x]}$ has smaller rank than~$G$, or is even a finite $F$-group. For example, in Lemma~\ref{nontoralcocycle}, we construct an algebraic cocycle~$x$ of~$\mc{E}$ for $\PGL_2$ such that the image of $x|_u$ is a form of $\mu_2^2$, forcing $Z_{G_{\ov{F}}}(x|_u) = \mu_{2}^{2}$. The source of these examples is the potential failure of $x|_u$ to factor through a maximal torus. To avoid this pathology, we restrict our attention to the \textit{regular classes} in $H_{\tn{alg}}^{1}(\mc{E}_{\tn{rig}}, G)$, those lying in the image of $H^{1}_{\tn{alg}}(\mc{E}_{\tn{rig}}, T)$ for some maximal torus $T$. In particular, if $[x]$ is regular then the map $x|_u$ factors through a torus and we can use this torus to analyze the structure of~$x$ in terms of the usual structure theory of reductive groups.

Even after we restrict to the regular classes $H^1_\tn{reg}(\mc{E}_\tn{rig},G)$, there remains a rich supply of groups of the form $Z_G^\circ(x|_u)$, which we call \textit{rigid Newton centralizers}. It is difficult to combinatorially describe these groups, but, as we show in Theorem~\ref{twistedLevithm}, they include all elliptic twisted Levi subgroups of~$G$. This observation gives strong evidence that $\mc{E}_\tn{rig}$ is equipped to answer Question~\ref{mainQuestion}: unlike $\mc{E}_\tn{iso}$, it can produce subgroups of~$G$ whose $L$-embeddings are amenable to the study of discrete parameters.

\subsubsection*{The basic rigid Langlands conjectures}
The rigid analogue of the $B(G)_\tn{bas}$ local Langlands conjecture~\eqref{BGparam}, articulated in \cite[(1.3)]{Kaletha16a}, is a conjectural bijection
\begin{equation} \label{basicRigidBijection}
\iota_{\mathfrak{w}} \colon
\bigsqcup_{[x] \in H^{1}_{\tn{bas}}(\mc{E}_{\tn{rig}}, G)} \Pi_{\phi}(G_{[x]})
\longisoto \Irr(\pi_{0}(S_{\phi}^{+})),
\end{equation}
where $S_{\phi}^{+}$ is the preimage of $S_{\phi}$ in the algebraic universal cover
$\widehat G_\tn{univ}$ of~$\widehat G$ (see Section~\ref{sec:univ}).
To work towards answering Question~\ref{mainQuestion},
we would like to extend the scope of \eqref{basicRigidBijection}
by weakening ``basic'' to ``regular'',
thereby parametrizing representations of subgroups of the form $G_{[x]}$
for $[x]\in \smash{H^1_\tn{reg}(\cE_\tn{rig},G)}$.
Since not every rigid Newton centralizer is a twisted Levi subgroup
(Example~\ref{nonadmex}),
and it is unclear how the dual groups of such subgroups should embed in~$\widehat G$,
we restrict our attention to the classes $[x] \in H^{1}_{\tn{reg}}(\mc{E}, G)$ such that
$\smash{Z_{G_{\ov{F}}}^\circ(x|_u)}$ is a Levi subgroup of $G_{\ov{F}}$,
which we call \textit{Levi regular} and denote by $\smash{H^{1}_{\tn{L-reg}}(\mc{E}, G)}$.

\subsubsection*{Newton and Kottwitz maps}
One initial observation is that the assignment $x\mapsto Z_G^\circ(x|_u)$
can be used to partition $\smash{H^1_\tn{reg}(\cE_\tn{rig},G)}$
into subsets parametrized by stable conjugacy classes of rigid Newton centralizers,
a partition that we call the \textit{Newton decomposition}:
\[
H^1_\tn{reg}(\mc{E},G) = \bigsqcup_{[M]} H^1_\tn{reg}(\mc{E},G)_M.
\]
Such partitions are a motif throughout the structure theory of $H^1_\tn{reg}(\cE_\tn{rig},G)$,
and reappear in the development of the Newton and Kottwitz maps.

The Newton map $\nu\colon H^1_\tn{reg}(\cE_\tn{rig},G)\to\cC_\tn{st}(G)$
is simply restriction to the band~$u$.
The challenge is to suitably interpret the target,
a certain space of conjugacy classes of homomorphisms $u\to G$
which has a rather different flavor than
the target $(X_*(T)_\bbQ)^\Gamma$ of the Newton map for~$B(G)$.
The difference is due to the fact that $\bbD$
is connected while $u$ is totally disconnected,
and while $(X_*(T)_\bbQ)^\Gamma$ is very nearly a Weyl chamber,
the set $\cC_\tn{st}(G)$ is roughly built from affine Weyl chambers,
indexed by elliptic maximal tori of~$G$
and glued together in a certain complicated way.

The Kottwitz map is much more subtle.
Our starting point is Kaletha's functor
$\ov{Y}_{+,\tn{tor}}(Z\to G)$
and pair of isomorphisms (\cite[Theorem 4.11]{Kaletha16a})
\begin{equation} \label{kalethaisom}
H^{1}_{\tn{bas}}(\mc{E}_{\tn{rig}}, G)
\isoto \ov{Y}_{+,\tn{tor}}(Z(G)\to G)
\isoto X^{*}(\pi_{0}(Z(\widehat{G})^{\Gamma,+})),
\end{equation}
the latter of which provides the ultimate connection
to the Galois side of the local Langlands correspondence.
Conjecturally, \eqref{kalethaisom} identifies which rigid inner twist of $G$
carries the representation corresponding to a fixed
$\rho \in \Irr(\pi_{0}(S_{\phi}^{+}))$,
by passing the restriction of $\rho$ to $\pi_{0}(Z(\widehat{G})^{\Gamma,+})$
through the isomorphism.
A prerequisite for any kind of reasonable non-basic rigid correspondence
is therefore the extension of this duality map to all of
$H^{1}_{\tn{L-reg}}(\mc{E}_{\tn{rig}}, G)$. 
This extension is accomplished in Sections \ref{Kottmapsec}
and \ref{dualEsec},
culminating in \eqref{fullTN}.

\begin{Theoremx}
Kaletha's functor $\ov{Y}_{+,\tn{tor}}(Z\to G)$
extends to a functor $\ov{Y}_{+,\tn{tor}}(G)$,
and there are injective Kottwitz maps
$H^1_\tn{reg}(\cE_\tn{rig},G)\into\ov{Y}_{+,\tn{tor}}(G)$
and
\[
\begin{tikzcd}
H^1_\tn{L-reg}(\cE_\tn{rig},G) \rar[hookrightarrow] &
\displaystyle\bigsqcup_{[N]}
\frac{X^{*}(\pi_{0}(Z(\widehat{G})^{\Gamma,+}_{(\widehat N)}))}{W(G,N)(F)},
\end{tikzcd}
\]
where $[N]$ ranges over stable conjugacy classes of rigid Newton centralizers and
$Z(\widehat G)^{\Gamma,+}_{(\widehat N)}$ is the preimage of
$Z(\widehat G)^\Gamma$ in $\widehat G_\tn{univ}$.
\end{Theoremx}

\subsubsection*{Applications to the Langlands correspondence}
Using our structure theory of $H^1_\tn{reg}(\cE_\tn{rig},G)$,
we can formulate a coarse generalization of
the $B(G)$ local Langlands conjectures to the rigid setting.
Fix a discrete $L$-parameter~$\phi$.
Given a quasi-split twisted Levi subgroup $N$ of~$G$,
when $\phi$ factors through an $L$-embedding
${}^LN_\pm\into{}^LG$,
or equivalently (Lemma~\ref{lem:dualizetwistedlevis}),
normalizes the image of~$\widehat N$ under such an embedding,
there is a resulting $L$-parameter~$\phi_{N,\pm}$ for~$N$.
Assuming the basic rigid local Langlands correspondence for~$N$
as in \eqref{basicRigidBijection},
from which one can deduce a similar correspondence for $N(F)_\pm$,
we construct a compound $L$-packet $\Pi^{\cE_\tn{rig}}_\phi(N_\pm)$
of representations of $N(F)_\pm$ and its rigid inner forms.
Writing $\Phi^{\cE_\tn{rig}}_\phi(G)$
for the collection of $\widehat G$-conjugacy classes
of basic rigid enhancements of~$\phi_{N,\pm}$
for all possible~$N$ whose restriction to $Z(\widehat{G})^{\Gamma,\circ,+}_{(\widehat{N})} \hookrightarrow \Hom_{F}(u, Z(N))$ (cf. Proposition \ref{Newtonbandprop}) has connected centralizer in $G$ equal to $N$,
we can formulate the following non-basic analogue of 
\eqref{basicRigidBijection}.

\begin{Theoremx}[Theorem~\ref{maintheorem2}]
Let $\cE=\cE_\tn{rig}$
and let $\phi$ be a discrete $L$-parameter for a quasi-split group~$G$.
Assume that the basic rigid local Langlands correspondence
\eqref{basicRigidBijection} exists for $G$
and each of its quasi-split elliptic twisted Levi subgroups.
Then there is a bijection $\iota^\tn{rig}_\mathfrak{w}$
fitting into the commutative square
\[
\begin{tikzcd}[row sep=small,column sep=large]
\displaystyle\bigsqcup_{[N]} \frac{\Pi^\cE_\phi(N_\pm)}{W(G,N)(F)}
\rar{\iota^\tn{rig}_\mathfrak{w}}\dar &
\Phi^\cE_\phi(G) \dar \\
H^1_\tn{L-reg}(\cE_\tn{rig},G) \rar{\tau} &
\displaystyle\bigsqcup_{[N]}
\frac{X^{*}(\pi_{0}(Z(\widehat{G})^{\Gamma,+}_{(\widehat N)}))}{W(G,N)(F)},
\end{tikzcd}
\]
where $[N]$ ranges over stable conjugacy classes of
twisted Levi subgroups of~$G$
such that $\phi$ factors through ${}^LN_\pm\into{}^LG$.
\end{Theoremx}

\subsection{Notation} \label{sec:notation}
Throughout this article, $F$ is a nonarchimedean local field of characteristic zero.

\subsubsection*{Galois groups}
Let $\ov{F}$ be a fixed algebraic closure of~$F$
with absolute Galois group $\Gamma$ and Weil group $W_{F}$.
For a finite Galois subextension $E/F$ we denote by $\Gamma_{E/F}$ the Galois group of $E/F$.
The Weil Deligne group $W_{F} \times \SL_{2}$ of $F$ is denoted by $W_{F}'$.%

\subsubsection*{Cohomology}
For $G$ a finite type $F$-group scheme,
the notation $H^{1}(F,G)$ always refers to $\smash{H^{1}_{\text{fppf}}}(F, G)$.
In our context, $G$ will always be either reductive or commutative,
and is thus always smooth,
so we could just as well write $H^{1}_{\text{\'{e}t}}(F, G)$ or
$H^1(F,G)\defeq H^{1}(\Gamma, G(\ov{F}))$.
Given a group $\Omega$ acting on a possibly nonabelian group $G$,
the group $G$ acts on the left on $Z^1(\Omega,G)$:
define, for $g\in G$ and $x\in Z^1(\Omega,G)$,
the cocycle ${}^gx\colon \sigma \mapsto g\cdot x(\sigma)\cdot{}^\sigma g^{-1}$,
which we call the \textit{twist of $x$ by $g$}.
The quotient by this action is then $H^1(\Omega,G)$.
For $g\in G$, we write $dg\colon\sigma\mapsto g^{-1}\cdot{}^\sigma g$
for the coboundary of~$g$, a $1$-cocycle.

\subsubsection*{Algebraic groups}
For an affine algebraic group $G$ over $F$,
we denote by $Z(G)$ the center of $G$,
by $G^{\circ}$ its identity component,
and by $\pi_{0}(G)$ the quotient $G/G^{\circ}$.
For $G$ connected reductive,
we denote by $G_\tn{der}$ the derived subgroup of~$G$
and by $G_{\Sc}$ the simply-connected cover of~$G_\tn{der}$.
We denote conjugation by left superscript: ${}^gx\defeq gxg^{-1}$.
For $H \subseteq G$ a subgroup scheme,
we denote by $Z_{G}(H)$ and $N_{G}(H)$ the scheme-theoretic centralizer
and normalizer of $H$, respectively,
and for an $F$-rational homomorphism $f\colon G \to G'$,
we denote by $Z_{G}(f)$ the centralizer of the scheme-theoretic image of $f$.
We abbreviate $Z_G^\circ(H)\defeq Z_G(H)^\circ$.

Algebraic subgroups of $G$ are assumed to be $F$-subgroups,
so that, for instance, if $T$ is a maximal torus of~$G$
then $T$ is defined over~$F$.
If it becomes necessary to discuss groups over~$\ov{F}$,
then we will denote their base change to~$\ov{F}$ by a subscript,
such as $G_{\ov{F}}$.

A torus $T$ of~$G$ is \textit{elliptic}
(with respect to~$G$) if $T/Z(G)$ is anisotropic,
and a full-rank reductive subgroup $H$ of~$G$
is \textit{elliptic} if $Z(H)$ is elliptic.
For a maximal torus of $T$ of $G$,
we denote by $T_{\psc}$ the preimage of $T$ in $G_{\Sc}$,
or by $T_{\psc,G}$ to stress the dependence on $G$
or to allow another group to replace it.
The parentheses around ``sc'' are to avoid confusion with $T_\tn{sc}$,
which is trivial.

A reductive subgroup $M$ of~$G$ is \textit{full-rank}
if $M$ contains a maximal torus of~$G$.
Given a full-rank reductive subgroup $M$ of~$G$,
let $W(G,M)\defeq N_G(M)/M$ denote the relative Weyl group.
Two full-rank reductive subgroups $M$ and $M'$ of~$G$
are \textit{stably conjugate}
if there is $g\in G(\ov{F})$ such that $M' = {}^gM$ and $dg\in Z^1(F,M)$.

\subsubsection*{Dual groups}
We denote by $\smash{\widehat{G}}$ a complex dual group of connected reductive $G$,
identified with its $\C$-points.
Fix an $F$-pinning $(\mathscr{B}_{G}, \mathscr{T}_{G}, \{X_{\hat{\alpha}_{G}}\})$
of~$\widehat G$ and a $\Gamma$-stable pinning $(B_{G}, T_{G}, \{X_{\alpha_{G}}\})$ of $G$.
Let ${}^LG = \widehat G\rtimes\Gamma$ be the Galois form of the $L$-group.

\addtocontents{toc}{\protect\setcounter{tocdepth}{0}}
\subsection*{Acknowledgements}
The authors thank Alexander Bertoloni Meli for suggesting that the approach of \cite{BMOi23} be applied to the rigid framework and providing invaluable feedback throughout the writing of this paper. They also thank Tom Haines for many useful conversations about various technical points, especially related to the rigid Kottwitz map. The authors further thank Jeff Adams, Serin Hong, Tasho Kaletha, Alex Youcis, and Zhiwei Yun for helpful discussions. The GAP computer algebra system \cite{GAP4} helped direct our search for examples.

\addtocontents{toc}{\protect\setcounter{tocdepth}{2}}
\section{Regular $\mc{E}$-cohomology}\label{Ecohomsec}

\subsection{The rigid gerbe and band}\label{prelimsubsec}
Following \cite[Section~3.1]{Kaletha16a},
define the profinite group scheme of multiplicative type
\[
u \defeq \varprojlim_{E/F,n} u_{E/F,n},
\qquad u_{E/F,n} \defeq \Res_{E/F}(\mu_n)/\mu_n,
\]
where $n$ ranges over nonnegative integers,
$E$ ranges over finite Galois extensions of~$F$, and
the transition map for $E\subseteq K$ and $n\mid m$ is
the composition of the $K/E$-norm and multiplication by $m/n$.
That is, the transition map corresponds to the morphism of $\Gamma$-modules
\[
\frac{1}{m}\bbZ/\bbZ[\Gamma_{K/F}]_0 \to \frac{1}{n}\bbZ/\bbZ[\Gamma_{E/F}]_0\colon
\sum_\gamma c_\gamma [\gamma]
\mapsto \sum_\gamma \sum_{\sigma\mapsto\gamma} c_\gamma [\sigma],
\]
where the subscript $0$ denotes the kernel of the augmentation map.

\begin{Lemma}
The map $\varprojlim_{E/F,n} \Res_{E/F}(\mu_n) \to u$ is an isomorphism
and $X^*(u) \simeq C^\infty(\Gamma,\bbQ/\bbZ)$.
\end{Lemma}

\begin{proof}
For the first claim, apply the anti-equivalence $X^*$:
\[
X^*\bigl(\varprojlim_{E/F,n} \Res_{E/F}(\mu_n)\bigr)
\simeq \varinjlim_{E/F,n} \frac1n\bbZ/\bbZ[\Gamma_{E/F}]
\simeq \varinjlim_{E/F} \bbQ/\bbZ[\Gamma_{E/F}],
\]
and there is a similar expression for $u$ if we include the subscript~$0$.
It now suffices to show that the map
$\varinjlim_E \bbQ/\bbZ[\Gamma_{E/F}]_0
\to\varinjlim_E \bbQ/\bbZ[\Gamma_{E/F}]$ is surjective, hence an isomorphism.
This follows from the fact that for every finite Galois extension $E/F$ and every integer $n$,
there is a finite Galois extension $E'$ such that $n \mid [E':E]$.
That is, if $1/n[\gamma]\in\bbQ/\bbZ[\Gamma_{E/F}]$
then the image of this element in $\bbQ/\bbZ[\Gamma_{E'/F}]$ is
of the form $1/n[\sigma_1] + \cdots + 1/n[\sigma_{[E':E]}]$,
which lies in the kernel of the augmentation map.
For the second claim, identify $a[\sigma]\in\bbQ/\bbZ[\Gamma_{E/F}]$
with the indicator function taking value $a$ on the coset $\sigma\Gamma_E$
and $0$ otherwise.
\end{proof}

Let $\mu\defeq\varprojlim_n \mu_n$ with inverse limit taken over
$n$th-power maps, so that $u \simeq \varprojlim_{E/F} \Res_{E/F}(\mu)$.

\begin{Lemma} \label{bandhoms}
Let $A$ be a multiplicative type group.
\begin{enumerate}
\item
If $A$ is finite type, then
$\Hom_F(u,A) \simeq \varinjlim_n\Hom_{\ov F}(\mu_n,A)$.

\item
If $A$ is finite and $n$-torsion, then
$\Hom_F(u,A) \simeq \Hom_{\ov F}(\mu_n,A)$.

\item
If $A$ is finite, then $\Hom_F(u,A) \simeq \Hom_{\ov F}(\mu,A)$.
\end{enumerate}
\end{Lemma}

\begin{proof}
For the first part,
we first claim that 
\[
\textstyle\Hom_F\bigl(\varprojlim_{E/F,n}\Res_{E/F}(\mu_n),A\bigr)
= \varinjlim_{E/F,n}\Hom_F(\Res_{E/F}(\mu_n),A).
\]
After applying $X^*$, this claim follows from the fact that
finitely-generated $\Gamma$-modules are compact objects
in the category of $\Gamma$-modules.
Moreover, since restriction and induction
are bi-adjoint functors between $\Gamma_F$-modules
and $\Gamma_E$-modules,
$\Hom_F(\Res_{E/F}(\mu_n),A) \simeq \Hom_E(\mu_n,A)$.
For the second part, note that if $A$ is $n$-torsion
and $n$ divides $m$, then the map
$\Hom_E(\mu_m,A)\to\Hom_E(\mu_n,A)$
induced by multiplication by $m/n$ is an isomorphism.
The third part follows from the second part
and the definition of $\mu$ as an inverse limit.
\end{proof}

Using local class field theory,
one can show that $H^{1}(F,u) = 0$ and there is a canonical isomorphism
$H^{2}(F, u) \xrightarrow{\sim} \widehat{\Z}$ (see \cite[Theorem 3.1]{Kaletha16a}).

\begin{Definition}
For any cocycle $\xi \in Z^{2}(\Gamma, u(\ov{F}))$ with image $-1 \in H^{2}(F,u)$, we define the \textit{rigid gerbe} $\mc{E}$ to be the gerbe $\mc{E}_{\xi} \to \tn{Sch}/\tn{Spec}(F)$. 
\end{Definition}

For ease of calculations, since we are in mixed characteristic we may view $\mc{E}$ as a group extension
\begin{equation*}
0 \to u(\ov{F}) \to \mc{E} \to \Gamma \to 1.
\end{equation*}

The gerbe $\mc{E} \to  \tn{Sch}/\tn{Spec}(F)$ inherits the fpqc topology from the base, and it thus makes sense, for a finite type affine algebraic group $G$ over $F$, to consider fpqc $G_{\mc{E}}$-torsors on $\mc{E}$, which we will simply refer to as \textit{$G$-torsors on $\mc{E}$}. The set of such torsors is denoted by $Z^{1}(\mc{E}, G)$ and isomorphism classes by $H^{1}(\mc{E}, G)$. Under the dictionary between $\mc{E}$ and the associated group extension (see \cite[\S~7.1]{Taibi22}), which we will use freely, these torsors correspond to arbitrary $1$-cocycles $z$ of $\cE$ with coefficients in~$G(\ov F)$ such that the group homomorphism $z|_{u(\ov{F})}$ is algebraic. We will write $Z^1(\cE,G)$ for the set of such cocycles, and $H^1(\cE,G)$ for cohomology classes of such.

Although the choice of $\xi$ representing $-1 \in H^{2}(F, u)$ is far from canonical,
for any two choices $\xi, \xi'$ there is a non-canonical isomorphism of
$u$-gerbes $\mc{E}_{\xi} \to \mc{E}_{\xi'}$,
and the vanishing of $H^1(F,u)$ implies that the induced bijection
$H^{1}(\mc{E}_{\xi'}, G) \to H^{1}(\mc{E}_{\xi}, G)$
is independent of choices.
For this reason, it is harmless to fix a representative $\xi$
once and for all and set $\mc{E} = \mc{E}_{\xi}$. 

\subsection{Regular cohomology classes}\label{cohomsetdefs}
In this subsection we introduce a type of cohomology class of $\cE$,
the regular classes,
that generalizes basic cohomology classes and is our main focus.

Let $G$ be a connected reductive group over $F$.
Restricting $x\in Z^1(\cE,G)$ to $u$ yields a homomorphism
$x|_u\colon \smash{u_{\ov{F}} \to G_{\ov{F}}}$,
and cohomologous cocycles have conjugate restrictions:
$({}^gx)|_u = \Inn(g)\circ(x|_u)$.
Moreover, since $x$ is a $1$-cocycle,
the $G(\ov{F})$-conjugacy class of $x|_u$ is defined over~$F$.
Recall the following fundamental definition from \cite{Kaletha16a}:

\begin{Definition}
\begin{enumerate}
\item $x\in Z^1(\cE,G)$ is \textit{basic} if $x|_u$ factors through $Z(G)(\ov{F})$.
\item $[x]\in H^1(\cE,G)$ is \textit{basic} if one (equivalently, any)
representative of~$[x]$ is basic.
\end{enumerate}
\noindent Write $Z^1_\tn{bas}(\cE,G)$ and $H^1_\tn{bas}(\cE,G)$
for the sets of basic cocycles and cohomology classes.
\end{Definition}

In this article, we will generalize this definition in the following way:

\begin{Definition} 
\begin{enumerate}
\item $x\in Z^1(\cE,G)$ is \textit{regular} if
$x|_u$ is defined over~$F$ and
$x\in Z^1(\cE,Z_G^\circ(x|_u))$.
\item $[x]\in H^1(\cE,G)$ is \textit{regular}
if it lies in the image of $H^1(\cE,S)$
for some maximal torus $S$.
\end{enumerate}
\noindent Write $Z^1_\tn{reg}(\cE,G)$ and $H^1_\tn{reg}(\cE,G)$
for the sets of regular cocycles and cohomology classes.
\end{Definition}

Our applications to the local Langlands conjectures in Section~\ref{llcsec}
use a strengthening of regularity:

\begin{Definition}
\begin{enumerate}
\item
$x\in Z^1_\tn{reg}(\cE,G)$ is \textit{Levi regular} if
$Z_G^\circ(x|_u)$ is a Levi subgroup of~$G$.
\item
$[x]\in H_\tn{reg}^1(\cE,G)$ is \textit{Levi regular}
if it is cohomologous to a Levi-regular cocycle.
\end{enumerate}
\noindent Write $Z^1_\tn{L-reg}(\cE,G)$ and $H^1_\tn{L-reg}(\cE,G)$
for the sets of Levi-regular cocycles and cohomology classes.
\end{Definition}

The set $H^1_\tn{reg}(\cE,G)$ is functorial in~$G$
with respect to homomorphisms of~$F$-groups,
but the sets $H^1_\tn{bas}(\cE,G)$ and $H^1_\tn{L-reg}(\cE,G)$ are not.

\begin{Remark}
It is not true that every algebraic cocycle $x$ is regular,
even if we ask for $x|_u$ to be defined over~$F$.
In Lemma~\ref{nontoralcocycle}, for the group $G=\PGL_2$ with $p\neq2$
we construct an algebraic cocycle $x\in Z^1(\cE,G)$
for which $x|_u$ is defined over~$F$
and $x(u)$ is a form of the Klein four-group,
so that $Z_G(x|_u)$ is the symmetric group on four letters.
We exclude such cocycles because they are radically different from the regular ones.
However, it would be extremely interesting to find a role for these cocycles
in the local Langlands correspondence.
\end{Remark}

\begin{Proposition} \label{cocyclefac1}
\begin{enumerate}
\item Let $x\in Z^1(\cE,G)$.
If $x|_u$ is defined over~$F$, then $x\in Z^1(\cE,Z_G(x|_u))$.
\item $Z^1_\tn{bas}(\cE,G)\subseteq Z^1_\tn{reg}(\cE,G)$
and $H^1_\tn{bas}(\cE,G)\subseteq H^1_\tn{reg}(\cE,G)$.
\item If $x\in Z^1_\tn{reg}(\cE,G)$, then $x\in Z^1_\tn{bas}(\cE,Z_G^\circ(x|_u))$.
\item A class in $H^1(\cE,G)$ is regular if and only if
it is represented by a regular cocycle.
\item For $x\in Z^1_\tn{reg}(\cE,G)$,
the group $Z_G^\circ(x|_u)$ contains a maximal $F$-torus of~$G$,
is reductive, and is defined over $F$.
\item 
Let $x,y\in Z^1_\tn{reg}(\cE,G)$.
If $y = {}^gx$, then $dg\in Z^1(F,Z_G^\circ(x|_u))$.
Hence $Z_G^\circ(x|_u)$ and $Z_G^\circ(y|_u)$ are stably conjugate.
\end{enumerate}
\end{Proposition}

\begin{proof}
The first part follows from the crossed homomorphism property:
for $\dot\sigma\in\cE$ a lift of $\sigma\in\Gamma$
and $a\in u(\ov{F})$, we have that
$x({}^\sigma a) = x(\dot\sigma a\dot\sigma^{-1})
= x(\dot\sigma)\cdot{}^\sigma x(a)\cdot x(\dot\sigma)^{-1}$.
Since $x|_u$ is defined over~$F$,
the expression above equals ${}^\sigma x(a)$,
meaning that $x(\dot\sigma)$ must commute with $x(u)$.

For the second part, to show the inclusion on $Z^1$,
we need to show that if $x$ is basic,
then $x|_u$ is defined over~$F$.
This implication follows from the string of equations
in the proof of the first part:
there we have $x(\dot\sigma)\in Z(G)(\ov{F})$
and conclude that $x({}^\sigma a) = {}^\sigma x(a)$,
so that $x|_u$ is defined over~$F$.
The inclusion on $H^1$ in the first part will then follow from the fourth part.

The third part follows from the definitions:
the image of $x|_u$ lies in $Z_G^\circ(x|_u)$ and commutes with it,
hence lies in $Z(Z_G^\circ(x|_u))$.

Next, for the special case of the fifth part where $x\in Z^1(\cE,S)$,
with $S$ a maximal $F$-torus of~$G$,
we can see that the morphism $x|_u$, and thus $Z_G(x|_u)$
is defined over~$F$ by using the string of equations
from the proof of the first part, since $S$ is abelian.
The rest of the fifth part in this special case
follows from Springer's computation
of the centralizer of a subset of a reductive group
contained in a maximal torus \cite[2.14~Lemma]{Steinberg75}.

For the forward implication in the fourth part, if the class is regular,
then by definition we may choose a representative $x\in Z^1(\cE,S)$
for some maximal $F$-torus~$S$ of~$G$.
By the special case of the fifth part proved above,
$S$ is a maximal torus of $Z_G(x|_u)$,
hence is contained in $Z_G^\circ(x|_u)$,
so that $x\in Z^1(\cE,Z_G^\circ(x|_u))$ as well.
Conversely, if $x$ is a regular cocycle,
then by the third part it is a basic cocycle for $Z_G^\circ(x|_{u})$,
and thus, by \cite[Corollary~3.7(2)]{Kaletha16a},
it is cohomologous to an element of $H^1(\cE,S)$
for some elliptic maximal torus $S$ of $G$.

For the general case of the fifth part,
by the fourth part there is $x'\in Z^1(\cE,S)$ cohomologous to~$x$.
The groups $Z_G^\circ(x|_u)$ and $Z_G^\circ(x'|_u)$
are defined over~$F$ because $x|_u$ and $x'|_u$ are,
and the two groups are isomorphic over~$\ov{F}$
because any $g\in G(\ov{F})$
that takes $x$ to~$x'$ conjugates the first group to the second.

For the sixth part, $g$ conjugates $x|_u$ to $y|_u$
and thus $Z_G^\circ(x|_u)$ to $Z_G^\circ(y|_u)$.
For every $e\in\cE$,
\[
x(e) = g^{-1}\cdot y(e)\cdot {}^eg = g^{-1} x(e) g\cdot dg(e),
\]
and since $x(e),g^{-1}y(e)g\in Z_G^\circ(x|_u)(\ov{F})$
because $x$ and $y$ are regular,
also $dg(e)\in Z_G^\circ(x|_u)(\ov{F})$.
\end{proof}

It will also be useful to define a ``finite-level''
version of $H^1_\tn{reg}(\cE,G)$,
where the image of the band is constrained.
Given a finite central subgroup $Z$ of~$G$,
we write $H^1(\cE,Z\to G)$ for the cohomology classes
for which some (equivalently, any) representative cocycle~$x$
has the property that $x|_u$ factors through~$Z$.

\begin{Definition}
Let $A$ be a multiplicative-type subgroup of~$G$ contained in a maximal torus.
We define $H^1_\tn{reg}(\cE,A\to G)$
as the elements lying in the image of $H^1(\cE,A\to S)$
for some maximal torus~$S$ of~$G$ containing~$A$.
\end{Definition}

It is proved in \cite[Lemma 3.6, Remark 3.11]{BMD26} that, in the special case $G = \mathrm{GL}_{n}$, the set $H^{1}(\cE, G)$ is exceptionally well-behaved:

\begin{Proposition}
Every class in $H^{1}(\cE, \mathrm{GL}_{n})$ is regular, and every class has a regular representative $x$ with $Z_{\mathrm{GL}_{n}}(x|_{u}) = \mathrm{Res}_{E/F}(\mathrm{GL}_{s})$ for some $E/F$ a finite extension such that $[E \colon F] \cdot s = n$, viewed as a subgroup of $\mathrm{GL}_{n}$ in the usual way.
\end{Proposition}

\subsection{The Newton decomposition}
The sixth part of Proposition~\ref{cocyclefac1}
associates to each regular cohomology class
a stable conjugacy class of full-rank reductive subgroups of~$G$,
which are twisted Levi subgroups if, in addition,
the cohomology class is Levi regular.
A regular cocycle lies in $H^1_\tn{reg}(\cE,A\to G)$
if and only if it is represented by some $x\in Z^1_\tn{reg}(\cE,G)$
for which $x|_u$ factors through~$A$.

\begin{Definition}
A full-rank reductive subgroup $M$ of~$G$
is a \textit{rigid Newton centralizer}
if $M = Z_G^\circ(x|_u)$ for some $x\in Z^1_\tn{reg}(\cE,G)$.
\end{Definition}

\noindent We will study but not completely classify
rigid Newton centralizers in Section~\ref{admsbgpsubsec}.

The construction of rigid Newton centralizers
from regular cohomology classes provides a convenient way to organize $H^1_\tn{reg}(\cE,G)$,
which plays a key role in several of our constructions.

\begin{Definition}
Let $M$ be a full-rank reductive subgroup of~$G$.
\begin{enumerate}
\item Let $H^1_\tn{reg}(\mc{E},G)_M$
be the classes with regular representative $x$
such that $Z_G^\circ(x|_u) = M$,
or equivalently, such that $Z_G^\circ(y|_u)$
is stably conjugate to $M$ for any regular representative~$y$.

\item Let $H^1_\tn{bas}(\mc{E},M)_\Greg$
be the classes such that $Z_G^\circ(x|_u)=M$
for some (equivalently, any) representative~$x$.
We call such classes \textit{$G$-regular}.
\end{enumerate}
\end{Definition}

So $H^1_\tn{reg}(\cE,G)_M$ is nonempty precisely when $M$
is a rigid Newton centralizer.
This construction yields a partition of $H^1_\tn{reg}(\mc{E},G)$,
which we call the \textit{Newton decomposition}:
\[
H^1_\tn{reg}(\mc{E},G) = \bigsqcup_{[M]} H^1_\tn{reg}(\mc{E},G)_M,
\]
the disjoint union ranging over stable conjugacy classes~$[M]$
of rigid Newton centralizers $M$ of~$G$.
In fact, elements in the component for~$M$
can be reduced to basic classes for~$M$,
as we now explain.

The short exact sequence defining the relative Weyl group
$W(G,M)$ gives rise to an action $W(G,M)(F)\actson H^1_\tn{reg}(\mc{E},M)$
by the following classical construction \cite[I\S~5.5]{Serre02}:
for $w \in W(G,M)(F)$ and $x\in Z^1(\mc{E},M)$,
choose a lift $\dot w \in N_{G}(M)(\ov{F})$ of~$w$
and set
\begin{equation}\label{Weyltransaction}
w\cdot[x] = [{}^{\dot w}x].
\end{equation}
Since $N_G(M)$ normalizes $Z(M)$,
this action also stabilizes the subset $H^1_\tn{bas}(\mc{E},M)$.

\begin{Theorem} \label{newtondecomp}
For every rigid Newton centralizer~$M$,
the fibers of the map $H^1_\tn{bas}(\cE,M)_\Greg\to H^1_\tn{reg}(\cE,G)$
are orbits for the action of $W(G,M)(F)$.
Hence there is a decomposition
\[
H^1_\tn{reg}(\cE,G) \simeq \bigsqcup_{[M]}
\frac{H^1_\tn{bas}(\cE,M)}{W(G,M)(F)}
\]
where $[M]$ ranges over stable conjugacy classes of rigid Newton centralizers.
\end{Theorem}

\begin{proof}
By Lemma~\ref{cocyclefac1}, the natural map
$H^1_\tn{reg}(\cE,M)\to H^1_\tn{reg}(\cE,G)$ restricts to a surjection
\[
\begin{tikzcd}
H^1_\tn{bas}(\mc{E},M)_\Greg \rar[twoheadrightarrow] &
H^1_\tn{reg}(\mc{E},G)_M.
\end{tikzcd}
\]
We will show that this map is also injective
after dividing out the target by~$W(G,M)(F)$.

Let $x,y\in Z^1_\tn{bas}(\cE,M)_\Greg$
and suppose that the images of $x$ and $y$ are cohomologous in~$G$,
so that $y = {}^gx$ for $g\in G(\ov{F})$.
Then $g$ normalizes~$M$ because it conjugates
$Z_G^\circ(x|_u)$ to $Z_G^\circ(y|_u)$ and these both equal~$M$.
Moreover, the image of $g$ in $W(G,M)(\ov{F})$
is defined over~$F$ because $dg\in Z^1(F,M)$
by the sixth part of Proposition~\ref{cocyclefac1}.
Hence, by \eqref{Weyltransaction},
the classes $[x]$ and $[y]$
lie in the same $W(G,M)(F)$-orbit.
Hence $H^1_\tn{bas}(\cE,M)_\Greg/W(G,M)(F)\to H^1_\tn{reg}(\cE,G)_M$
is injective.
\end{proof}

There is a further refinement of the Newton decomposition
analogous to the construction of extended pure inner twists
in the theory of the Kottwitz set.

\begin{Notation}
Given $x\in Z^1_\tn{reg}(\cE,G)$,
let $G_x$ be the inner form of $Z_G^\circ(x|_u)$
obtained from twisting this group by the image of $x$
in $H^1(F,(Z_G^\circ(x|_u))_\tn{ad})$.
The inner isomorphism equivalence class of this group depends only on~$[x]$,
and we denote it by~$G_{[x]}$.
\end{Notation}

For later use, we record:

\begin{Lemma}\label{miracleLem}
Let $M$ be a quasi-split twisted Levi subgroup of~$G$.
If $[y] = w \cdot [x] \in H^{1}_{\tn{bas}}(\mc{E}, M)$ for $w \in W(G,M)(F)$,
then $[x]$ and $[y]$ have the same image in $H^{1}(F, M_{\tn{ad}})$.
\end{Lemma}	

\begin{proof}
By twisting we can reduce to the case when $[x]$ is the neutral class and $[y]$ has representative $d\dot w$ for $\dot w$ lifting $w$.
A choice of pinning of $M$ gives rise to a splitting of the short exact sequence
\begin{equation*}
1 \to \underline{\tn{Inn}}_{M} \to \underline{\tn{Aut}}_{M}
\to \underline{\tn{Out}}_{M} \to 1.
\end{equation*}
Hence any $w \in W(G,M)(F) \subseteq \tn{Out}_{M}(F)$
can be lifted to an $F$-automorphism $\theta_{w} \in \tn{Aut}_{M}(F)$.
The morphism $\Inn(\dot w) \in \underline{\tn{Aut}}_{M}(\ov{F})$
has the same image in $\underline{\tn{Out}}_{M}(F)$ as $\theta_{w}$
and so there is some $m \in M(\ov{F})$ such that $\Inn(\dot wm)$ is defined over $F$.
Now we can use $d(\dot wm)$ as the representative for $[y]$,
and we just saw that it has trivial image in $H^{1}(F, M_{\tn{ad}})$.
\end{proof}

\section{The Newton map}
Simply stated, the Newton map restricts an element of
$H^1_\tn{reg}(\cE,G)$ to the band~$u$,
yielding a conjugacy class of homomorphisms $u\to G$.
The difficulty in studying this map is to make precise its target,
which is much more complex than in the isocrystal case.

\subsection{Defining the map}
\begin{Definition}
\begin{enumerate}
\item[]
\item
Let $\Hom_F(u,G)/_\tn{st}G$ be the set of homomorphisms $u\to G$
modulo the following equivalence relation:
$\lambda\sim\lambda'$ if there is $g\in G(\ov{F})$ such that
$\lambda' = {}^g\lambda$ and $dg\in Z^1(F,Z_G^\circ(\lambda))$.

\item
Given a maximal $F$-torus $T\subseteq G$ with Weyl group $W$,
define $\cC_\tn{st}(G,T)$ as the quotient of $X_*(T)\otimes\bbQ/\bbZ$
by the equivalence relation
$\lambda\sim\lambda'$ if there is $w\in W$ such that
\[
\tn{$\lambda'=w\cdot\lambda$ and
$dw\in Z^1(F,Z^\circ_W(\Gamma\cdot\lambda))$,}
\]
where, for $A\subseteq X_*(T)$,
we let $Z^\circ_W(A)$ be the subgroup generated by
reflections $s_\alpha$ for $\alpha\in R(G,T)$ such that
$\langle\alpha,a\rangle = 0$ for all $a\in A$.
\item
Define $\cC_\tn{st}(G)$ as the quotient of
$\bigsqcup_{T\subseteq G} X_*(T)\otimes\bbQ/\bbZ$,
the union taken over all maximal $F$-tori of~$G$,
by the equivalence relation
$(\lambda,T)\sim(\lambda',T')$ if there is $g\in G(\ov F)$ such that
\[
\tn{$(\lambda',T') = g(\lambda,T)g^{-1}$ and
$dg\in Z^1(F,N_G(T)\cap Z_G^\circ(\Gamma\cdot\lambda))$.
}
\]
\end{enumerate}
\end{Definition}

The letter $\cC$ stands for ``chamber'',
since in many (but not all cases) a subset of $\cC_\tn{st}(G,T)$
can be identified with an affine Weyl chamber,
and the letters ``st'' stand for ``stable'',
since the equivalence relation defining $\cC_\tn{st}(G,T)$
is stable conjugacy.

\begin{Lemma}
For every $T\subseteq G$, the map
$\cC_\tn{st}(G,T)\to\cC_\tn{st}(G)$ is injective.
As $T$ ranges over representatives of stable conjugacy classes of maximal tori,
these maps jointly surject onto $\cC_\tn{st}(G)$.
\end{Lemma}

\begin{proof}
Joint surjectivity is clear.
For injectivity, note first that
$(N_G(T)\cap Z_G^\circ(\Gamma\cdot\lambda))(\ov{F})
\simeq Z_W^\circ(\Gamma\cdot\lambda)$
by Steinberg's description of centralizers
\cite[2.14~Lemma]{Steinberg75}.
If $(\lambda,T)$ and $(\lambda',T)$
are equivalent in $\cC_\tn{st}(G)$,
then there is $g\in N_G(T)(\ov F)$ such that
$dg\in Z^1(F,N_G(T)\cap Z_G^\circ(\Gamma\cdot\lambda))$,
and the image of~$g$ in $W(G,T)(\ov F)$
witnesses the equivalence of $\lambda$ and~$\lambda'$
in $\cC_\tn{st}(G,T)$.
\end{proof}

\begin{Lemma} \label{newtonmaptarget}
Restriction to the identity component yields an isomorphism
\[
\Hom_F(u,G) /_\tn{st} G \simeq \cC_\tn{st}(G).
\]
\end{Lemma}

\begin{proof}
Let $E/F$ be a finite Galois extension.
We can identify $\Hom_{\ov F}(\Res_{E/F}(\mu_n),G)$
with the set of functions $f\colon\Gal(E/F)\to \Hom_{\ov F}(\mu_n,G)$.
The group $\Gamma$ acts on these functions
through its projection to~$\Gal(E/F)$,
by $(\sigma f)(\tau) = \sigma f(\sigma^{-1}\tau)$.
Hence $f$ is defined over~$F$ if and only if
$\sigma f(1) = f(\sigma)$ for all $\sigma\in\Gal(E/F)$.
In this case, we can recover $f$ from $f(1)\in G(E)$,
reflecting the adjunction
$\Hom_F(\Res_{E/F}(\mu_n),G) = \Hom_E(\mu_n,G)$.
Note, however, that if $f$ is not defined over~$F$
then we cannot recover $f$ from $f(1)$.

We define the map as follows.
Given a function $f$ as above which is defined over~$F$,
let $T$ be a maximal $F$-torus containing~$f(1)$.
Send the class of $f$ to the class of $(f(1),T)$.

Let us check that the map is well defined.
Note that $Z_G(f) = Z_G(\Gamma\cdot f(1))$,
as $f$ is defined over~$F$.
First, suppose $T'$ is another torus through which $f(1)$ factors.
Since $T$ and $T'$ are maximal tori of~$Z_G^\circ(f)$,
we can conjugate $T$ to $T'$ by some $g\in Z_G^\circ(f)(\ov F)$,
which satisfies $dg \in Z^1(F,N_G(T)\cap Z_G^\circ(f))$
because $T'$ is defined over~$F$.
Hence $(f(1),T)$ and $(f(1),T')$
represent the same element of $\cC_\tn{st}(G)$.
Second, suppose $f'$ is defined over~$F$ and
$f' = {}^g f$ for some $g\in G(\ov F)$ such that $dg\in Z^1(F,Z_G^\circ(f))$.
This condition still holds if we multiply $g$
on the left by an element of $Z_G^\circ(f')(\ov F)$,
meaning that we may assume ${}^g T$ is defined over~$F$.
Then $dg\in Z^1(F,Z_G^\circ(f)\cap N_G(T))$, meaning that
$(f(1),T)$ is equivalent to $(f'(1),T')$ in $\cC_\tn{st}(G)$.
\end{proof}

\begin{Definition}
The \textit{Newton map} $\nu\colon H^1_\tn{reg}(\cE,G)\to\cC_\tn{st}(G)$
is the composition of the restriction map
$H^1_\tn{reg}(\cE,G)\to\Hom_F(u,G)/_\tn{st} G$
with the isomorphism of Lemma~\ref{newtonmaptarget}.
\end{Definition}

\noindent Evidently $\cC_\tn{st}(G)$ is functorial in~$G$
and $\nu$ is natural in~$G$.

As the terminology suggests, the rigid Newton centralizer
for a regular cohomology class can be read off from its image under the Newton map.
If $x$ is a regular representative and 
$T$ is a maximal torus of $Z_G^\circ(x|_u)$,
and $\lambda\in\cC_\tn{st}(G,T)$ represents $\nu([x])$,
then $Z_G^\circ(x|_u)$ is stably conjugate to
\[
Z_G^\circ(\Gamma\cdot\lambda)
= \bigcap_{\sigma\in\Gamma} Z_G^\circ({}^\sigma\lambda).
\]
The equivalence relation on $\cC_\tn{st}(G)$
is designed so that any two choices of representative $\nu([x])$
yield stably conjugate subgroups $Z_G^\circ(\Gamma\cdot\lambda)$.
We can use this construction to define a Newton decomposition for $\cC_\tn{st}(G)$:
Given a rigid Newton centralizer~$M$ with maximal torus~$T$,
let $\cC_\tn{st}(G)_M$ be the subset of points of $\cC_\tn{st}(G)_M$
with some representative $\lambda$ for which $Z_G^\circ(\Gamma\cdot\lambda)=M$.
Then
\[
\cC_\tn{st}(G) = \bigsqcup_{[M]} \cC_\tn{st}(G)_M,
\]
the disjoint union ranging over stable conjugacy classes~$[M]$
of rigid Newton centralizers $M$ of~$G$.
Moreover, $\nu$ restricts to a map
$H^1_\tn{reg}(\cE,G)_M\to\cC_\tn{st}(G)_M$.

It is difficult to give a simple combinatorial description of $\cC_\tn{st}(G)_M$,
but one can get some feel for the global structure of $\cC_\tn{st}(G)$
by considering the extreme values of~$M$.

\begin{Remark}
At one extreme, if $\lambda$ is regular,
then $Z_G(\Gamma\cdot\lambda) = T$,
and the equivalence class of $\lambda$ in $\cC_\tn{st}(G,T)$
can be identified with the rational Weyl orbit $W(F)\cdot\lambda$.
Hence the ``dense subset'' $\cC_\tn{st}(G,T)_\tn{reg}\subseteq\cC_\tn{st}(G,T)$
of regular elements can be identified with
\[
\bigl(X_*(T)\otimes\bbQ/\bbZ\bigr)_\tn{reg}/W(F),
\]
though this subset is typically strictly smaller than $\cC_\tn{st}(G,T)_\tn{reg}$.
When $T$ is split, or anisotropic with $\Gamma$
acting by negation through a quadratic character, say,
we can interpret this quotient as the rational points
in an affine Weyl chamber.
In general, there are examples of $T\subseteq G$
for which $W(F)$ is trivial, for instance,
$E^\times \subseteq \GL_n(F)$ if $E/F$ is a degree-$n$ separable extension
with $\Aut(E/F)=1$,
and in these cases, this subset is a union of such chambers.
Finally, $\cC_\tn{st}(G,T)_\tn{reg}$ is identified with
$\cC_\tn{st}(G,T')_\tn{reg}$ in $\cC_\tn{st}(G)$
if and only if $T$ and $T'$ are stably conjugate.
At the other extreme, if $\lambda=1$ is the trivial torsion cocharacter,
then all elements of the form $(1,T)$ are identified in $\cC_\tn{st}(G)$.
Hence the cells $\cC_\tn{st}(G,T)$ are glued together at least at~$1$,
and often at other points besides.
\end{Remark}

\subsection{Classifying rigid Newton centralizers}\label{admsbgpsubsec}
A natural question is which subgroups of $G$ arise as $Z_G^\circ(x|_u)$
for $x \in Z^1_{\tn{reg}}(\cE, G)$, the rigid Newton centralizers.
By design, every regular class is in the image of $H^{1}(\cE, T)$
for some $F$-rational maximal torus $T$ of $G$.
We can thus compute all possible rigid Newton centralizers in $G$
by ranging over all $T$ and studying the Newton map of $H^{1}(\cE, T)$.

Recall from \cite[Theorem 4.8]{Kaletha16a} (or Theorem~\ref{Tatenak1tori})
the Tate--Nakayama duality isomorphism $\iota_{[Z\to T]}$
for $H^1(\cE,Z\to T)$.

\begin{Lemma} \label{Lemtncompat}
Let $T$ be a torus,
let $Z\subseteq T$ be a finite multiplicative subgroup,
let $E/F$ be a finite Galois extension splitting $T$
and such that $N_{E/F}(X^*(Z))=0$,
and let $n$ be a multiple of the order of $X^*(Z)$.
The Tate--Nakayama duality isomorphism $\iota_{[Z \to T]}$
fits into the following diagram,
which is commutative with exact bottom row:
\begin{equation}\label{TNbanddiag}
\begin{tikzcd}
H^{1}(\cE, Z \to T) \dar{\sim}[swap]{\iota_{[Z\to T]}} \arrow{r} &
\Hom_{\ov{F}}(\mu_n, Z) \dar{\sim} &  \\
\dfrac{X_{*}(T/Z)^{N_{E/F}}}{I_{E/F}\cdot X_{*}(T)} \arrow{r} &
\dfrac{X_*(T/Z)}{X_*(T)} \arrow{r} &
\dfrac{X_{*}(T)^{\Gamma}}{N_{E/F}(X_{*}(T))},
\end{tikzcd}
\end{equation}
\end{Lemma}

\begin{proof}
The bottom row is \cite[Fact~4.1]{Kaletha16a},
noting, by our assumption on~$E/F$,
that every element in the middle group lies in the kernel of $N_{E/F}$.
The right vertical arrow is the inverse of \cite[(4.1)]{Kaletha16a},
and the square commutes by the final part of \cite[Theorem~4.8]{Kaletha16a}.
\end{proof}

\begin{Proposition}\label{Newtonimageprop}
Let $T$ be a torus, let $n\geq1$,
and let $E/F$ be a finite Galois extension splitting~$T$.
A map $\nu \in \Hom_{\ov{F}}(\mu_n, T[n])$
lies in the image of the restriction map from $H^{1}(\cE, T[n] \to T)$
if and only if $\nu$ extends to an element of $X_{*}(T)^{N_{E/F}}$.
\end{Proposition}

\begin{proof}
First, take $E/F$ such that $N_{E/F}(X^*(Z)\otimes\bbZ/n\bbZ)=0$,
so that we may apply Lemma~\ref{Lemtncompat}.
The isomorphism $T/T[n] \to T$ induced by the $n$th power map
gives a more concrete description
of the bottom row of \eqref{TNbanddiag} for $Z=T[n]$.
Under this identification,
\[
\frac{X_{*}(T/T[n])^{N_{E/F}}}{I_{E/F}X_{*}(T)}
= \frac{X_{*}(T)^{N_{E/F}}}{I_{E/F}[nX_{*}(T)]}, \qquad
\frac{X_{*}(T/T[n])}{X_{*}(T)} = \frac{X_{*}(T)}{nX_{*}(T)},
\]
the left bottom-row map sends the class of $\lambda \in X_{*}(T)$
to the image of $\lambda|_{\mu_n}$,
and if the left vertical map takes $\nu$ to the class of
$\lambda\in X_*(T)$, then the right bottom-row map
sends the class of $\lambda$ to the class of $N_{E/F}(\lambda)$.
It follows that the kernel of the right bottom-row map is identified with the
$\nu \in \Hom_{\ov{F}}(\mu_n, T[n])$
which have a lift $\lambda \in X_{*}(T)$ such that
$N_{E/F}(\lambda) = N_{E/F}(n\lambda')$ for some $\lambda' \in X_{*}(T)$.
Then the map $\lambda - n \lambda'$ lies in $X_{*}(T)^{N_{E/F}}$
and has the same restriction to $\mu_n$ as $\lambda$,
since $\lambda'|_{\mu_n}$ takes values in $T[n]$. 
To finish, where $E/F$ is the splitting field of~$T$,
note that $X_*(T)^{N_{E'/F}}$
is the same for all finite extensions $E'/F$ splitting~$T$.
\end{proof}

We can use Proposition \ref{Newtonimageprop} to show:

\begin{Theorem}\label{twistedLevithm}
Every elliptic twisted Levi subgroup of $G$ is a rigid Newton centralizer.
\end{Theorem}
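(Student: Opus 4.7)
If $L = G$, the trivial cocycle in any elliptic maximal torus gives $Z_G(f_x) = G$, so assume $L \subsetneq G$. The plan is as follows. By hypothesis we have an elliptic maximal torus $T \subseteq L$; set $A := Z(L)^\circ$, an $F$-rational subtorus of $T$ satisfying $L = Z_G(A)$ over $\ov F$. By Proposition~\ref{Newtonimageprop}, it will suffice to produce $\lambda \in X_*(A)^{N_{E_k/F}}$ and to enlarge $k$ in the cofinal system so that the resulting class $[x] \in H^1(\cE, T[n_k] \to T) \subseteq H^1_{\tn{reg}}(\cE, G)$, whose identity-coordinate homomorphism corresponds to $\lambda \bmod n_k$, has $Z_G(f_x) = L$.

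For the construction of $\lambda$, I will first choose $k$ so that $E_k$ splits $T$, and decompose $X_*(A)_\Q = V_0 \oplus V$ into the trivial and nontrivial $\Gamma_{E_k/F}$-isotypic components; then $V = \ker(N_{E_k/F})$, and ellipticity of $T$ identifies $V_0$ with $X_*(A_G)_\Q$, where $A_G$ is the maximal $F$-split central torus of $G$. Since $A \supsetneq A_G$ (as $L \subsetneq G$), the subspace $V$ has positive dimension. The crux is to choose $\lambda \in X_*(A) \cap V$ avoiding (a) each root hyperplane $\alpha^\perp$ for $\alpha \in R(G,T) \setminus R(L,T)$, and (b) each fixed subspace $V^w$ for $w \in W(G,T) \setminus W(L,T)$. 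Both families consist of finitely many proper subspaces of $V$: any $\alpha \notin R(L,T)$ is nontrivial on $A$ but trivial on $A_G \subseteq Z(G)$, hence $\alpha|_V \neq 0$; any $w \notin W(L,T)$ is nontrivial on $A$ (since $W(L,T)$ is precisely the pointwise stabilizer of $A$ in $W(G,T)$) but trivial on $V_0 = X_*(A_G)$ (as $W$ fixes $Z(G)$ pointwise), hence acts nontrivially on $V$. A generic element of $V$ then gives, after rescaling, the desired $\lambda \in X_*(A)^{N_{E_k/F}}$.

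Finally, I will enlarge $k$ further so that $n_k$ strictly exceeds both $\max\{|\langle \lambda, \alpha\rangle| : \alpha \in R(G,T) \setminus R(L,T)\}$ and the largest entry (in a fixed basis of $X_*(T)$) of any of the finitely many nonzero vectors $(w-1)(\gamma\lambda)$ with $w \in W(G,T) \setminus W(L,T)$ and $\gamma \in \Gamma_{E_k/F}$. Proposition~\ref{Newtonimageprop} then produces the class $[x]$, and for each $\gamma$ the identity component $Z_G(\prescript{\gamma}{}f_{x,e})^\circ$ has root system $\{\alpha : \langle \gamma\lambda, \alpha\rangle \equiv 0 \pmod{n_k}\} = R(L,T)$ by (a) and the first bound, while no Weyl element outside $W(L,T)$ fixes $\gamma\lambda \bmod n_k$ by (b) and the second bound; hence $Z_G(\prescript{\gamma}{}f_{x,e}) = L$, and intersecting over $\gamma$ yields $Z_G(f_x) = L$. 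The hard part is the doubly-generic choice of $\lambda$ in the middle paragraph, which relies essentially on the ellipticity hypothesis to ensure that $V$ has positive dimension and separates both non-$L$ roots from zero and non-$L$-Weyl elements from the identity.
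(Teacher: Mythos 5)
Your proof is correct and follows essentially the same strategy as the paper's: produce a cocharacter killed by the norm (so that it lies in the image of the map from $H^1(\cE, T)$ via Proposition~\ref{Newtonimageprop}) whose reduction mod $n_k$ has centralizer exactly~$L$. The paper starts from an arbitrary $\lambda\in X_*(T)$ with $Z_G(\lambda)=L$ and passes to $\mu=[E:F]\lambda-N_{E/F}\lambda$, using ellipticity to see that $N_{E/F}\lambda$ is central and hence $\langle\mu,\alpha\rangle=[E:F]\langle\lambda,\alpha\rangle$; you instead decompose $X_*(Z(L)^\circ)_\Q$ into trivial and nontrivial isotypic components and choose a generic integral vector in the nontrivial piece~$V$. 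These are the same idea in different clothing --- the paper's $\mu$ is precisely $[E:F]$ times the projection of $\lambda$ onto your~$V$.

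Your presentation does have one substantive advantage: you explicitly bound $n_k$ against the differences $(w-1)(\gamma\lambda)$ for $w\in W(G,T)\setminus W(L,T)$, which is what ensures the \emph{full} centralizer $Z_G(f_x)$, not merely its identity component, equals~$L$. (Condition~(b) on~$\lambda$ is in fact a consequence of~(a), since $Z_G(\lambda)=L$ already gives $W(G,T)_\lambda=W(L,T)$; but the accompanying bound on~$n_k$ is not redundant and is what is really needed.) The paper records only the root-pairing condition $\langle\mu,\alpha\rangle\in n_k\Z\Leftrightarrow\langle\mu,\alpha\rangle=0$, which controls $Z_G(\mu|_{\mu_{n_k}})^\circ$ but not a priori its component group: a Weyl element $w$ with $w\mu\neq\mu$ but $w\mu\equiv\mu\pmod{n_kX_*(T)}$ would contribute extra components. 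For example, in $G=\PGL_2$ with $T=L$ anisotropic and $\lambda$ a generator of $X_*(T)\simeq\Z$, one has $\mu=2\lambda$, $\langle\mu,\alpha\rangle=2$, and $w\mu-\mu=-4\lambda$; taking $n_k=4$ satisfies the paper's root condition, yet $Z_G(\mu(\zeta_4))=N_G(T)\neq T$. So the paper's choice of~$k$ implicitly needs to be strengthened, as you do, to also exceed the entries of the nonzero vectors $(w-1)(\gamma\mu)$.
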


\begin{proof}
Let $M$ be the twisted Levi subgroup and let $T = Z^\circ(M_\tn{der})$.
The torus $T$ is anisotropic because $M$ is elliptic in~$G$,
and thus $N_{E/F}(X_*(T)) = 0$ for every finite Galois extension $E/F$
splitting~$T$.

We claim that there is a homomorphism $\nu\colon\mu_n\to T_{\ov{F}}$
such that $Z_G(\nu) = M$.
This follows from Steinberg's description of
the centralizer of a semisimple element
\cite[2.14~Lemma]{Steinberg75}.
Indeed, choose a maximal torus $S$ of~$G$ containing~$T$.
Since $Z_G(T) = M$,
as $\alpha$ ranges over $R(G,S)\setminus R(M,S)$
the proper subtori $\ker(\alpha)\cap T_{\ov{F}}\subseteq T_{\ov{F}}$
do not cover $T_{\ov{F}}$.
So as soon as $\nu$ avoids these tori
we will have $Z_G^\circ(\nu) = M$.
Moreover, as soon as the order of~$\nu$
on every irreducible factor of~$G_\tn{der}$
is not bad for~$G$, its centralizer will be connected,
proving the claim.

Now let $\nu$ be as in the claim.
By Proposition~\ref{Newtonimageprop},
the map $\nu$ extends to a cocycle $x\in Z^1(\cE,T)$.
Then $Z_G(x|_u)$ is the intersection of all Galois conjugates of~$M$,
but since $M$ is defined over~$F$, this is $M$ itself.
\end{proof}

The converse of Theorem~\ref{twistedLevithm} is false:
there are rigid Newton centralizers that are non-elliptic twisted Levi subgroups
(Example~\ref{exnotell}) or that are not twisted Levi subgroups at all
(Example~\ref{nonadmex}).

\section{The Kottwitz map}\label{Kottmapsec}
Classically, the Kottwitz map is an isomorphism
$H^1(F,G)\longrightarrow \pi_1(G)_{\Gamma,\tn{tor}}$
where $\pi_1(G)$ is Borovoi's algebraic fundamental group,
the finitely-generated $\Gamma$-module defined by
\[
\pi_1(G) \defeq \varinjlim_{S\subseteq G} \frac{X_*(S)}{X_*(S_\psc)},
\]
in which the colimit ranges over maximal $F$-tori of~$G$
and the transition maps from $S$ to~$S'$, all isomorphisms,
are induced by the elements of $G(\ov{F})$ conjugating $S$ to~$S'$
(see \cite[11.3.1 and 11.7.7]{KalethaPrasad}).
Similarly, for the isocrystal gerbe one has a Kottwitz map (see \cite[\S~11]{Kottwitz14})
\begin{equation*}
B(G) \to \pi_{1}(G)_{\Gamma},
\end{equation*}
which now is no longer necessarily an isomorphism.
In both settings, the target of the Kottwitz map admits a dual description:
\[
\pi_1(G)_{\Gamma,\tn{tor}} \simeq X^*(\pi_0(Z(\widehat G))^\Gamma),\qquad
\pi_1(G)_\Gamma \simeq X^*(Z(\widehat G)^\Gamma).
\]

The goal of this section is to generalize these constructions to the rigid setting.
More precisely, in Section~\ref{kotttargsec}
we construct a set $\ov{Y}_{+,\tn{tor}}(G)$
generalizing Kaletha's $\ov{Y}_{+,\tn{tor}}(Z\to G)$,
and in Section~\ref{kottdefsec} we define a map
\begin{equation}\label{Kottinteq1}
H^{1}_{\tn{reg}}(\cE, G) \to \ov{Y}_{+,\tn{tor}}(G).
\end{equation}
To motivate our constructions, which are somewhat elaborate,
we first recall the basic case of \eqref{Kottinteq1},
due to Kaletha \cite{Kaletha16a}.

Define the category $\mathcal{R}$ whose objects are pairs $[A \to G]$
with $G$ a reductive $F$-group, $A$ a finite multiplicative $F$-group,
and $A\to G$ a monomorphism that factors through some $F$-rational torus.
The morphisms $[A_{1} \to G_{1}] \to [A_{2} \to G_{2}]$
in $\mathcal{R}$ are $F$-rational homomorphisms
$h\colon G_{1} \to G_{2}$ sending $A_{1}$ into $A_{2}$
and such that $Z_{G_{2}}^\circ(h(A_{1})) = Z_{G_{2}}^\circ(A_{2})$.
Kaletha uses the full subcategory $\mathcal{R}_\tn{cent}$ of $\mathcal{R}$
consisting of the objects $[Z \to G]$ with $f(Z)$ central.
The assignment $[Z \to G] \mapsto H^{1}(\cE, Z \to G)$
is a functor from $\mathcal{R}_\tn{cent}$ to sets,
or even, as follows from the theory of the basic Kottwitz map
(see Theorem \ref{Tatenak1} below), to abelian groups.

Kaletha first constructs the functor $\ov{Y}_{+,\tn{tor}}$
on the full subcategory $\mathcal{T}$ of $\mathcal{R}_\tn{cent}$ where $G$ is a torus:
\begin{equation}\label{linalgfuncbas}
\ov{Y}_{+,\tn{tor}}(Z \to S)
\defeq \varinjlim_{E/F} \frac{X_{*}(S/Z)^{N}}{I\cdot X_{*}(S)}
\end{equation}
where $I = I_{E/F}$ is the augmentation ideal for $\Gamma_{E/F}$,
the superscript $N$ denotes the elements killed by the norm map $N_{E/F}=N$,
and the colimit is taken over finite separable extensions $E/F$ splitting~$S$.
Equivalently, $\ov{Y}_{+,\tn{tor}}(Z \to S)$
is the torsion subgroup of $X_{*}(S/Z)^N/I\cdot X_{*}(S)$, for any such $E/F$.

\begin{Theorem}[{\cite[Theorem 4.8]{Kaletha16a}}]\label{Tatenak1tori}
There is a canonical isomorphism of functors on $\mathcal{T}$:
\begin{equation*}
H^{1}(\cE, -) \isoto \ov{Y}_{+,\tn{tor}}(-).
\end{equation*}
\end{Theorem} 
The functor $\ov{Y}_{+,\tn{tor}}$ extends to $\mathcal{R}_\tn{cent}$ via the formula below,
which is formally similar to the definition of~$\pi_1(G)_\Gamma$:
\begin{equation} \label{oldybar}
\ov{Y}_{+,\tn{tor}}(Z \to G) \defeq \varinjlim_{S \subseteq G} \varinjlim_{E/F} \frac{(X_{*}(S/Z)/X_{*}(S_{\psc}))^{N}}{I(X_{*}(S)/X_{*}(S_{\psc}))},
\end{equation}
Here the inner limit is as in \eqref{linalgfuncbas},
the outer limit ranges over all maximal $F$-rational tori $S$ of $G$,
and the outer transition maps from $S$ to $S'$
are isomorphisms induced by the elements of~$G(\ov{F})$
conjugating $S$ to~$S'$ (see \cite[Lemma~4.2]{Kaletha16a}).
This construction gives a well-defined functor
$\ov{Y}_{+,\tn{tor}}$ from $\mathcal{R}_\tn{cent}$ to abelian groups,
as shown in \cite[\S~4.1]{Kaletha16a},
and this functor computes basic cohomology of the rigid gerbe:

\begin{Theorem}[{\cite[Theorem 4.11]{Kaletha16a}}]\label{Tatenak1}
The map of Theorem~\ref{Tatenak1tori} extends to an isomorphism
of functors on $\mathcal{R}_\tn{cent}$:
\begin{equation*}
\iota_{-} \colon H^{1}(\cE, -) \longisoto \ov{Y}_{+,\tn{tor}}(-)
\end{equation*}
\end{Theorem} 

\begin{Remark}
In the theory of the Kottwitz map for $B(G)$,
both the basic and non-basic versions of the map have the same target,
$\pi_1(G)_\Gamma$, and the map $B(G)\to\pi_1(G)_\Gamma$ is far from a bijection.
In our rigid setting, however, the map
$H^1_\tn{reg}(\cE,G)\to\ov{Y}_{+,\tn{tor}}(G)$
that we call the Kottwitz map
has a different target than the basic Kottwitz map
$H^1_\tn{bas}(\cE,G)\isoto\ov{Y}_{+,\tn{tor}}(Z(G)\to G)$,
and our ``Kottwitz map'' will turn out to be injective
(Proposition~\ref{kottinj}).
The fundamental source of this discrepancy
is the failure of $\ov{Y}_{+,\tn{tor}}(Z(G)\to G)$
to be functorial in~$G$ (see Section~\ref{kottfuncex}),
and it is unclear to us how to develop
the rigid theory to formally match the isocrystal theory.
\end{Remark}

\subsection{The target of the map}\label{kotttargsec}
Our first step is to extend $\ov{Y}_{+,\tn{tor}}$
from $\mathcal{R}_\tn{cent}$ to all of $\mathcal{R}$.
For ease of notation, given $[A \to G] \in \text{Ob}(\mathcal{R})$,
we identify $A$ with $f(A) \subseteq G$ to treat $A$
as an $F$-rational multiplicative subgroup of $G$.
The basic idea is to replace $Z$ with $A$ in \eqref{oldybar},
taking care to define the direct limit correctly.

\begin{Lemma} \label{trivaction}
Let $S\subseteq G$ be a maximal torus and $A\subseteq S$ a finite subgroup.
\begin{enumerate}
\item $N_G(S)(\ov{F})$ acts trivially on $X_*(S)/X_*(S_\psc)$.
\item $(Z_G^\circ(A)\cap N_G(S))(\ov{F})$ acts trivially on $X_*(S/A)/X_*(S_\psc)$.
\end{enumerate}
\end{Lemma}

\noindent In the second part we cannot replace $Z_G^\circ(A)$ by $Z_G(A)$:
when $G=\PGL_2$ and $A=\mu_2$,
the group $Z_G(A) = N_G(S)$ acts nontrivially, by negation,
on $X_*(S/A)/X_*(S_\psc)\simeq\bbZ/4\bbZ$.

\begin{proof}
The first part is well known, and implicit in the definition of $\pi_1(G)$;
see, for example, the proof of \cite[Lemma~4.2]{Kaletha16a}.
For the second part, let $H=Z_G^\circ(A)$.
Applying the first part to the maximal torus $S/A$
of the reductive group $H/A$,
we see that $N_{H/A}(S/A)(\ov{F})$ acts trivially on
$X_*(S/A)/X_*(T)$, where $T$ is the inverse image of $S/A$ in
$(H/A)_\tn{sc} = H_\tn{sc}$.
The inclusion $H\into G$ induces a map $H_\tn{sc}\to G_\tn{sc}$,
whence a map $T\to S_\psc$ and an inclusion $X_*(T)\into X_*(S_\psc)$.
So $X_*(S/A)/X_*(S_\psc)$ is a quotient of $X_*(S/A)/X_*(T)$,
and thus $N_H(A)(\ov{F})$ acts trivially on it.
\end{proof}

Now we turn to defining the inverse limit.

\begin{Lemma}\label{stabisom}
Let $g\in G(\ov{F})$, let $S\subseteq G$ be a maximal torus,
and let $A\subseteq S$ be a finite subgroup.
If $dg\in Z^1(F,Z_G^\circ(A)\cap N_G(S))$,
then the isomorphism
\[
\frac{X_*(S/A)}{X_*(S_\psc)} \longisoto
\frac{X_*({}^gS/{}^gA)}{X_*({}^gS_\psc)}
\]
induced by conjugation by $g$ is $\Gamma$-equivariant.
\end{Lemma}

\begin{proof}
By assumption, the difference between the maps induced by $g$ and ${}^\sigma g$
is an element of $(Z_G^\circ(A)\cap N_G(S))(\ov{F})$,
and by Lemma~\ref{trivaction},
this group acts trivially on $X_*(S/A)/X_*(S_\psc)$.
So the two maps must be equal.
\end{proof}

\begin{Definition}\label{Ydef}
Let $A$ be a finite subgroup of~$G$ contained in a maximal $F$-torus.
\begin{enumerate}
\item Given a maximal torus $S$ of~$G$ containing $A$, define
\[
\wt{Y}_{+,\text{tor}}(A\to G;S) \defeq
\varinjlim_{E/F} \frac{(X_{*}(S'/A')/X_{*}(S'_{\psc}))^{N}}%
{I(X_{*}(S')/X_{*}(S'_{\psc}))},
\]
taking the direct limit, as in \eqref{linalgfuncbas},
over all finite extensions $E/F$ splitting~$S'$.

\item Define
$\wt{Y}_{+,\text{tor}}(A\to G) \defeq 
\varinjlim_{A'\subseteq S'\subseteq G} \wt{Y}_{+,\tn{tor}}(A'\to G,S')$,
taking the direct limit over the connected groupoid containing $(S,A)$
whose objects are pairs $(S',A')$ and whose morphisms from $(S',A')$ to $(S'',A'')$
are the $g\in G(\ov{F})$ such that ${}^g(S',A') = (S'',A'')$ and
$dg\in Z^1(F,Z_G^\circ(A')\cap N_G(S'))$.
\end{enumerate}
\end{Definition}

That is, by Lemma~\ref{stabisom},
any $g\in G(\ov{F})$ as in Definition~\ref{Ydef}
induces $\Gamma$-equivariant isomorphisms
\[
\frac{X_*(S'/A')}{X_*(S'_\psc)} \longisoto
\frac{X_*(S''/A'')}{X_*(S''_\psc)},
\qquad
\frac{X_*(S')}{X_*(S'_\psc)} \longisoto
\frac{X_*(S''/A'')}{X_*(S''_\psc)},
\]
whence an isomorphism
$\wt{Y}_{+,\tn{tor}}(A'\to G;S')\isoto
\wt{Y}_{+,\tn{tor}}(A''\to G;S'')$
that we use to form the outer direct limit in Definition~\ref{Ydef}.
The groupoid in the definition is connected
because any two maximal tori containing $A$
are conjugate by some $g\in Z_G^\circ(A)(\ov{F})$,
which thus satisfies $dg\in Z^1(F,Z_G^\circ(A)\cap N_G(S))$.
Moreover, $\wt{Y}_{+,\tn{tor}}$ agrees with $\ov{Y}_{+,\tn{tor}}$
on $\mathcal{R}_\tn{cent}$,
since in that case $Z_G(A) = G$
and thus the outer limit there passes over all
$F$-rational maximal tori in $G$.

\begin{Lemma}
Let $S$ be a maximal torus containing~$A$.
Then there is a canonical isomorphism
\[
\wt{Y}_{+,\tn{tor}}(A\to G)\simeq
\frac{\wt{Y}_{+,\tn{tor}}(A\to G;S)}{W(G,Z_G^\circ(A))(F)}.
\]
\end{Lemma}

\begin{proof}
By definition of $\wt{Y}_{+,\tn{tor}}(A\to G)$,
this set is isomorphic to the orbits
for the action on the set $\wt{Y}_{+,\tn{tor}}(A\to G;S)$
of the group of $n\in N_G(A,S)(\ov{F})$
such that $dn\in Z^1(F,Z_G^\circ(A)\cap N_G(S))$.
But since the group $(Z_G^\circ(A)\cap N_G(S))(\ov{F})$
acts trivially on this set by Lemma~\ref{trivaction},
the action descends to an action of
\[
\biggl(\frac{N_G(A,S)(\ov{F})}{(Z_G^\circ(A)\cap N_G(S))(\ov{F})}\biggr)^\Gamma
\]
which is isomorphic to $W(G,Z_G^\circ(A))(F)$.
\end{proof}

\begin{Proposition} \label{Yfunctorial}
The set $\wt{Y}_{+,\tn{tor}}(A\to G)$ is functorial in $[A\to G]$
for morphisms in $\mathcal{R}$.
\end{Proposition}

\begin{proof}
Fix a morphism $\phi\colon [A_{1} \to G_{1}] \to [A_{2} \to G_{2}]$ in $\mathcal{R}$.
The goal is to assign a morphism of pointed sets between
$\wt{Y}_{+,\text{tor}}(A_{1} \to G_{1})$
and $\wt{Y}_{+,\text{tor}}(A_{2} \to G_{2})$.
For brevity, write $H_j \defeq Z_G^\circ(A_j)$.

Let $S_j$ be maximal tori of $G_j$ containing $A_j$ for $j=1,2$
and such that $\phi(S_1)\subseteq S_2$.
The map $\phi$ lifts uniquely to a map $G_{1,\Sc} \to G_{2,\Sc}$,
then restricts to a map $S_{1,\psc}\to S_{2,\psc}$
which induces a map $\wt{Y}_{+,\tn{tor}}(A_1\to G_1;S_1)\to
\wt{Y}_{+,\tn{tor}}(A_2\to G_2;S_2)$.
To show that this map descends to the direct limit defining 
$\wt{Y}_{+,\tn{tor}}(A\to G)$,
we need to show that for any $g_{1} \in G_1(\ov{F})$
with $dg_1\in Z^1(F,N_{H_1}(S_1))$,
there is $g_2\in G_2(\ov{F})$ with $dg_2\in Z^1(F,N_{H_2}(S_2))$
making the following diagram commute:
\[
\begin{tikzcd}
\wt{Y}_{+,\tn{tor}}(A_1\to G_1;S_1) \rar{\phi}\dar{g_1} &
\wt{Y}_{+,\tn{tor}}(A_2\to G_2;S_2) \dar{g_2} \\
\wt{Y}_{+,\tn{tor}}({}^{g_1}A_1\to G_1;{}^{g_1}S_1) \rar{\phi} &
\wt{Y}_{+,\tn{tor}}({}^{g_2}A_2\to G_2;{}^{g_2}S_2).
\end{tikzcd}
\]
Naively one would like to take $g_2=\phi(g_1)$,
but this choice may not work: although $\phi(H_1)
\subseteq Z_{G_2}^\circ(\phi(A_1)) = H_2$ by assumption,
implying that $d\phi(g_1)\in Z^1(F,H_2^\circ)$,
since $\phi(N_{H_1}(S_1))\not\subseteq N_{H_2}(S_2)$ in general,
the torus ${}^{\phi(g_1)}S_2$ need not be defined over~$F$.
To remedy this, since all maximal tori of $Z_{G_2}^\circ(A_2)$
are conjugate over $\ov{F}$,
we may choose $h \in H_2(\ov{F})$
such that ${}^{\phi(g_1)h}S_2$ is defined over~$F$.
Let $g_2=\phi(g_1)h$,
so that $dg_2\in Z^1(F,N_{H_2}(S_2))$.
The resulting map $\wt{Y}_{+,\tn{tor}}(A_2\to G_2;S_2)
\to \wt{Y}_{+,\tn{tor}}({}^{g_2}A_2\to G_2;{}^{g_2}S_2)$
is independent of the choice of $h$,
and thus the diagram above commutes
and the maps there are independent of all choices.
\end{proof}

Using Proposition~\ref{Yfunctorial},
it will be convenient to slightly extend the definition of
$\wt{Y}_{+,\tn{tor}}(A\to G)$ as follows:
Given a possibly infinite subgroup $B\subseteq G$ contained in a maximal torus, let
\begin{equation}\label{infB}
\wt{Y}_{+,\tn{tor}}(B\to G) \defeq
\varinjlim_{A\subseteq B} \wt{Y}_{+,\tn{tor}}(A \to G),
\end{equation}
where the direct limit is taken over finite subgroups $A\subseteq B$
such that $Z_G^\circ(A) = Z_G^\circ(B)$.

Our definition of the target of the Kottwitz map is not quite complete
since $\wt{Y}_{+,\tn{tor}}(A\to G)$ depends on~$A$
but $H^1_\tn{reg}(\cE,G)$ does not.
To remove the dependence on~$A$,
we use the functoriality of Proposition~\ref{Yfunctorial}
to form a direct limit.

If we restrict the category $\mathcal{R}$ to pairs $[A \to G]$
where $G$ is fixed and $A$ is a finite $F$-rational subgroup contained in a torus,
then there is a morphism $[A' \to G] \to [A \to G]$
if and only if $A' \subseteq A$ and $Z_G^\circ(A) = Z_G^\circ(A')$.
These pairs $[A\to G]$ thus form a subcategory $\mathcal{R}_{(G)}$
equal to the disjoint union of categories
\[
\mathcal{R}_{(G)} = \bigsqcup_{[H]} \mathcal{R}_{(G)}^{H},
\]
where $\mathcal{R}_{(G)}^{H}$ is the subcategory of all $[A \to G]$
with $Z_{G}(A) = H$ and $[H]$ ranges over all stable conjugacy classes
of full-rank reductive $F$-subgroups of~$G$,
in other words, the possible centralizers
of finite multiplicative subgroups of~$G$
that lie in an $F$-torus.
Of course, in our application to $H^1_\tn{reg}(\cE,G)$,
only those $H$ which are rigid Newton centralizers will be relevant.

\begin{Definition}
$\displaystyle\wt{Y}_{+,\tn{tor}}(G) \defeq \bigsqcup_{[H]} \varinjlim_A
\wt{Y}_{+,\tn{tor}}(A\to G)$, where
\begin{itemize}
\item
$[H]$ ranges over the stable conjugacy classes of
full-rank reductive $F$-subgroups of~$G$,
\item
$A$ ranges over the poset of finite multiplicative $F$-subgroups $A$ of~$G$
with $H = Z_G^\circ(A)$, and
\item
the (injective) transition map for $A\subseteq A'$
is induced by the functoriality of Proposition~\ref{Yfunctorial}.
\end{itemize}
\end{Definition}

\noindent Note that each set in the disjoint union is independent of the choice of~$H$
since $\wt{Y}_{+,\tn{tor}}(A\to G) = \wt{Y}_{+,\tn{tor}}(A'\to G)$
whenever $Z_G^\circ(A)$ is stably conjugate to~$Z^\circ_G(A')$. 

To ensure that $\wt{Y}_{+,\tn{tor}}$
is functorial with respect to all maps of reductive groups,
we need to shrink it slightly.
This reduction is effected by the following avatar of the Newton map.

\begin{Lemma} \label{lambdalemma}
The homomorphisms
$\wt{Y}_{+,\tn{tor}}(A\to G;S) \longrightarrow X_*(S/A)/X_*(S)$
induce a map 
\[
\lambda\colon\wt{Y}_{+,\tn{tor}}(G)\to\cC_\tn{st}(G),
\]
which is natural with respect to morphisms in $\mathcal{R}$.
\end{Lemma}

\begin{proof}
This claim is a straightforward consequence
of the easily-verified fact that the family of given homomorphisms
is compatible with the equivalence relations
defining $\wt{Y}_{+,\tn{tor}}$ and $\cC_\tn{st}$.
We leave the details to the reader.
\end{proof}

\begin{Definition}
\begin{enumerate}
\item
For $M$ a rigid Newton centralizer,
$\ov{Y}_{+,\tn{tor}}(G)_M\defeq \lambda^{-1}(\cC_\tn{st}(G)_M)$.
\item $\displaystyle\ov{Y}_{+,\tn{tor}}(G) \defeq \bigsqcup_{[M]}\ov{Y}_{+,\tn{tor}}(G)_M$,
where $[M]$ ranges over stable conjugacy classes
of rigid Newton centralizers $M$.
\end{enumerate}
\end{Definition}

\begin{Proposition}
$G\mapsto\ov{Y}_{+,\tn{tor}}(G)$
is functorial for homomorphisms of reductive $F$-groups.
\end{Proposition}

\begin{proof}
Let $f\colon G \to G'$ be a morphism of connected reductive groups.
The goal is to define a map
\begin{equation*}
\ov{Y}_{+,\tn{tor}}(f) \colon
\bigsqcup_{[M]} \ov{Y}_{+,\tn{tor}}(G)_M
\longrightarrow \bigsqcup_{[M']} \ov{Y}_{+,\tn{tor}}(G')_{M'}
\end{equation*}
Fix a rigid Newton centralizer $M$ of $G$.
We define the map on the $M$-block above as follows:
for each $\bar x\in\ov{Y}_{+,\tn{tor}}(G)_M$,
choose a maximal torus $T$ of~$M$, a subgroup $A\subseteq Z(M)$, 
and a representative
$x\in \wt{Y}_{+,\tn{tor}}(A\to G;T)$.
Let $\lambda(x)$ be a representative of~$\lambda(\bar x)$.
Using Lemma~\ref{newtonmaptarget},
we interpret $\lambda(x)$ as a homomorphism $u\to G$ defined over~$F$,
which we choose to factor through~$A$,
and we let $M'_x\defeq Z_{G'}^\circ(f\circ\lambda(x))$.
Choosing $A$ minimally, we may assume that $\lambda(x)(u) = A$.
The stable conjugacy class of $M'_x$ depends only on~$\bar x$,
and $f(M)\subseteq M'_x$ because $M = Z_G^\circ(\lambda(x))$ by assumption.
Enlarging $f(T)$ to a maximal torus $T'$ of~$G'$ and setting $A'=f(A)$,
we may interpret $f(x)$ as an element of the set
$\wt{Y}_{+,\tn{tor}}(A'\to G';T')$,
which then maps to $\ov{Y}_{+,\tn{tor}}(G')_{M'_x}$.
The assignment $x\mapsto f(x)$
is evidently compatible with enlarging $A$ while preserving $M = Z_G^\circ(A)$.
Moreover, the equivalence class of $f(x)$ in $\ov{Y}_{+,\tn{tor}}(G')$
is independent of the choice of~$x$:
for $g\in G(\ov{F})$,
if $dg\in Z^1(F,M)$, then $f(g)\in Z^1(F,M'_x)$.
\end{proof}

\subsection{Defining the map}\label{kottdefsec}
The goal of this subsection is to extend the Kottwitz map
of functors on $\mathcal{R}_\tn{cent}$ from Theorem \ref{Tatenak1}
to a morphism $H^{1}(\cE, -)\to\ov{Y}_{+,\tn{tor}}(-)$
of functors on the category $\mathcal{R}$.

\begin{Proposition}\label{keyprop1}
Let $A$ be a finite subgroup of~$G$
contained in a torus and let $M=Z_G^\circ(A)$.
There exists a unique dashed arrow making the following diagram commute:
\[
\begin{tikzcd}
H^1(\cE,A\to M)_\Greg \rar[twoheadrightarrow]\dar{\iota_{[A\to M]}} &
H^1_\tn{reg}(\cE,A\to G)_M \dar[dashed] \\
\wt{Y}_{+,\tn{tor}}(A\to M) \rar &
\wt{Y}_{+,\tn{tor}}(A\to G).
\end{tikzcd}
\]
\end{Proposition}

\begin{proof}
Choose representatives $x,y \in Z^{1}(\cE, A \to M)$
such that $Z_G^\circ(x|_u) = Z_G^\circ(y|_u) = M$.
By assumption there is some $g \in G(\ov{F})$ such that $y = {}^gx$.
Then $g \in N_{G}(M)(\ov{F})$, since 
$ \prescript{g}{}M = \prescript{g}{}Z_G^\circ(x|_u)
= Z_G^\circ(\prescript{g}{}x|_u) = Z_G^\circ(y|_u) = M$,
and in fact $g$ projects to an element of $W(G,M)(F)$, since $dg\in Z^1(F,M)$.
In other words, in terms of the natural action of $W(G,M)(F)$
on $H^1(\mc{E},A\to M)$ recalled in \eqref{Weyltransaction},
the classes $[x]$ and $[y]$ lie in the same $W(G,M)(F)$-orbit.

Using \cite[Corollary 3.7]{Kaletha16a},
we may replace $y$ by a cohomologous element in $Z^{1}(\mc{E}, A \to M)$
in order to assume further that $y \in Z^{1}(\cE, A \to S)$,
where $S$ is a fixed elliptic maximal torus of $M$.
The existence of the dashed arrow will follow if we can show that
$\iota_{[A \to M]}([x])$ and $\iota_{[A \to M]}([y])$
have the same image under the map
\begin{equation}\label{funckeymap}
\wt{Y}_{+,\tn{tor}}(A \to M;S)
\longrightarrow \frac{\wt{Y}_{+,\tn{tor}}(A\to G;S)}{W(G,M)(F)},
\end{equation}
in which case one can define the dashed map
by choosing a lift to $H^1(\cE,A\to M)_\Greg$
and sending it through the bottom left corner of the diagram,
all choices of lift yielding the same result.

We claim that the composition 
\begin{equation}\label{Weylequivcomp}
H^{1}(\mc{E}, A \to M) \xrightarrow{\iota_{[A \to M]}}
\wt{Y}_{+,\tn{tor}}(A\to M;S) \longrightarrow
\wt{Y}_{+,\tn{tor}}(A\to G;S)
\end{equation}
sends $W(G,M)(F)$-orbits to $W(G,M)(F)$-orbits, where $W(G,M)(F)$ acts on the right-hand side of \eqref{Weylequivcomp} by lifting elements to $N_{G}(S)(\ov{F})$ and then acting as usual. This claim would give the desired result since $[x]$ is a $W(G,M)(F)$-translate of $[y]$.

Given $w \in W(G,M)(F)$, let $\dot w \in N_G(M)(\ov{F})$ be an arbitrary lift.
Then $d\dot w \in Z^{1}(F, M)$, and since $S$ is elliptic,
the class of $d\dot w$ in $H^{1}(F, M)$
is cohomologous to a class in the image of $H^{1}(F,S)$.
So we may replace $\dot w$ by an
$M(\ov{F})$-translate to assume that $d\dot w \in Z^{1}(F, S)$,
and take $G'$ to be the twisted form of $G$ determined by the cocycle $d\dot w$.
In particular, we have an $F$-rational isomorphism $\Inn(\dot w)\colon G \to G'$ which restricts to an $F$-rational isomorphism $M \isoto M' \defeq \prescript{d\dot w}{}M$. By construction, the composition $S \to M \isoto M'$ is still the embedding of an $F$-rational maximal torus, so we can and do view $S$ as a maximal torus of $M'$.%

Given $x \in Z^{1}(\cE, A \to M)$,
consider the cocycle $x' = {}^{\dot w^{-1}}x$
representing the translate of $[x]$ by~$w^{-1}$.
To analyze $x'$, we will use the commutative diagram of bijections
\begin{equation}\label{kotttwistdiag}
\begin{tikzcd}[column sep=2.5cm]
H^{1}(\mc{E}, A \to M) \arrow["\text{Ad}(\dot w)"]{r} \arrow["\iota_{[A \to M]}"]{d} &
H^{1}(\mc{E}, A \to M') \arrow["\cdot d\dot w"]{r} \arrow["\iota_{[A \to M']}"]{d} &
H^{1}(\mc{E}, A \to M) \arrow["\iota_{[A \to M]}"]{d} \\
\wt{Y}_{+,\tn{tor}}(A\to M;S) \arrow["\text{Ad}(\dot w)"]{r} &
\wt{Y}_{+,\tn{tor}}(A\to M;S) \arrow["+\iota_{[A \to M]}(d\dot w)"]{r} &
\wt{Y}_{+,\tn{tor}}(A\to M;S)
\end{tikzcd}
\end{equation}
where we use that $\wt{Y}_{+,\tn{tor}}(A\to M;S)$
is unaffected by twisting the $\Gamma$-action on $M(\ov{F})$
by $d\dot w$, since $d\dot w$ takes values in $S$.
The lefthand square commutes by functoriality of $\iota$
and the righthand square commutes by \cite[Lemma 1.4]{Kottwitz86}.

The lefthand square in \eqref{kotttwistdiag} gives
$\iota_{[A \to M]}([x']) = \Inn(\dot w^{-1})\iota_{[A \to M']}([\dot wx'\dot w^{-1}])$
and one computes easily that $\dot wx'\dot w^{-1} = x \cdot (d\dot w)^{-1}$,
so that
\[
\iota_{[A \to M]}([x']) = \Inn(\dot w^{-1})\iota_{[A \to M']}([x\cdot(d\dot w)^{-1}])
\]
The commutativity condition for the righthand square in \eqref{kotttwistdiag}
applied to $[x\cdot(d\dot w)^{-1}]$ gives
\[
\iota_{[A \to M']}([x\cdot(d\dot w)^{-1}]) + \iota_{[A\to M]}(d\dot w) = \iota_{[A\to M]}([x]).
\]
Combining these expressions, we find that
\[
\iota_{[A\to M]}([x']) = \Inn(\dot w^{-1})(\iota_{[A\to M]}([x])
- \iota_{[A\to M]}([d\dot w])).
\]
The last step uses the identity
\begin{equation}\label{newkeyeqn}
\Inn(\dot w^{-1})\iota_{[A \to M]}([d\dot w]^{-1}) = \iota_{[A \to M]}([d(\dot w^{-1})]),
\end{equation}
which we will prove momentarily.
Granted \eqref{newkeyeqn}, we see that
\[
\iota_{[A\to M]}([x']) = \Inn(\dot w^{-1})(\iota_{[A\to M]}([x])
+ \iota_{[A\to M]}([d(\dot w^{-1})])).
\]
This shows that \eqref{Weylequivcomp}
preserves $W(G,M)(F)$-orbits,
as $\iota_{[A \to M]}([d(\dot w^{-1})])$ lies in the kernel of 
$\wt{Y}_{+,\tn{tor}}(A\to M;S) \longrightarrow \wt{Y}_{+,\tn{tor}}(A\to G;S)$.

To prove \eqref{newkeyeqn},
let $\xi \in Z^{2}(F,u)$
be the representative of the canonical class corresponding to $\cE$
described in \cite[\S~4.5]{Kaletha16a} and used there to construct
the Tate--Nakayama isomorphism,
By the definition of that isomorphism,
$\xi \cup \iota_{[A \to M]}([d\dot w]) = [d\dot w]$.
One then computes that
\begin{equation*}
\xi \cup (\text{Ad}(\dot w^{-1})\iota_{[A \to M]}([d\dot w]))
=  (e \mapsto {}^e\dot w^{-1}d\dot w {}^e\dot w)
= [d(\dot w^{-1})]^{-1},
\end{equation*}
or equivalently, that
$\iota_{[A \to M]}([d(\dot w^{-1})]^{-1})
= \text{Ad}(\dot w^{-1})\iota_{[A \to M]}([d\dot w])$,
and \eqref{newkeyeqn} now follows by taking the inverse of both sides,
since $\iota_{[A \to M]}$ is a group homomorphism.
\end{proof}

\begin{Warning}
Theorem \ref{Tatenak1} gives $H^{1}(\cE, A \to M)$
the structure of an abelian group,
but the action of $W(G,M)(F)$ does not preserve this structure.
For example, the neutral element can move since
the class $[d\dot w]\in H^{1}(\mc{E}, A \to M)$ is often nontrivial.
\end{Warning}

We can use Proposition~\ref{keyprop1} to define a candidate
$\tilde\kappa\colon H^1_\tn{reg}(\cE,G)\to\wt{Y}_{+,\tn{tor}}(G)$
for the rigid Kottwitz map,
namely, the unique map such that for every rigid Newton centralizer $M$
and finite subgroup $A\subseteq Z(M)$,
the restriction of $\tilde\kappa$ to $H^1_\tn{reg}(\cE,A\to G)_M$ is the composite map
\[
H^1_\tn{reg}(\cE,A\to G)_M \to \wt{Y}_{+,\tn{tor}}(A\to G)
\into \wt{Y}_{+,\tn{tor}}(G),
\]
where the left map is the dashed arrow of Proposition~\ref{keyprop1}.
To finish the construction of the Kottwitz map,
we will prove several compatibility conditions showing that
$\tilde\kappa$ factors through $\ov{Y}_{+,\tn{tor}}(G)$.

\begin{Lemma} \label{lemtorussquare}
Let $T\subseteq G$ be a maximal torus.
The following square commutes:
\[
\begin{tikzcd}[row sep=small]
H^1(\cE,T) \rar\dar[near start]{\tilde\kappa} &
H^1_\tn{reg}(\cE,G) \dar[near start]{\tilde\kappa} \\
\wt{Y}_{+,\tn{tor}}(T) \rar &
\wt{Y}_{+,\tn{tor}}(G).
\end{tikzcd}
\]
\end{Lemma}

\begin{proof}
Let $x\in Z^1(\cE,T)$, let $A=x(u)$,
and let $M=Z_G^\circ(A)$.
Consider the following diagram
\[
\begin{tikzcd}[row sep=small]
H^1(\cE,A\to T)_M \rar \arrow[rr,bend left=10] \dar[near start]{\tilde\kappa} &
H^1_\tn{reg}(\cE,A\to G)_M \dar[near start]{\tilde\kappa} &
H^1(\cE,A\to M)_\Greg \lar[twoheadrightarrow] \dar[near start]{\tilde\kappa} \\
\wt{Y}_{+,\tn{tor}}(A\to T) \rar \arrow[rr,bend right=10] &
\wt{Y}_{+,\tn{tor}}(A\to G) &
\wt{Y}_{+,\tn{tor}}(A\to M). \lar
\end{tikzcd}
\]
Here $H^1(\cE,A\to T)_M$ denotes the classes of $x\in Z^1(\cE,A\to T)$
such that $Z_G^\circ(x|_u)=M$.
By a diagram chase,
it suffices to show commutativity of all faces in the diagram
except possibly the left square.
The top triangle commutes by functoriality of group cohomology.
The bottom triangle commutes by functoriality of $\wt{Y}_{+,\tn{tor}}$
for morphisms in $\mathcal{R}$ (Proposition~\ref{Yfunctorial}).
The right square commutes by definition of~$\tilde\kappa$.
The back rectangle commutes by naturality
of the basic Kottwitz map \cite[Theorem~4.11]{Kaletha16a}.
\end{proof}

\begin{Lemma} \label{kottnewtcompat}
Let $\lambda$ be the map of Lemma~\ref{lambdalemma}.
The following triangle commutes:
\[
\begin{tikzcd}
H^1_\tn{reg}(\cE,G) \rar{\tilde\kappa} \arrow[rr,bend left=20,"\nu" near end] &
\wt{Y}_{+,\tn{tor}}(G) \rar{\lambda} &
\cC_\tn{st}(G).
\end{tikzcd}
\]
\end{Lemma}

\begin{proof}
Given $[x]\in H^1_\tn{reg}(\cE,G)$,
choose a maximal torus $T$ of~$G$
such that $[x]$ lies in the image of $H^1(\cE,T)$.
Consider the following diagram:
\[
\begin{tikzcd}
H^1_\tn{reg}(\cE,T) \dar\rar{\tilde\kappa} \arrow[rr,bend left=20,"\nu" near end] &
\wt{Y}_{+,\tn{tor}}(T) \rar{\lambda} &
\cC_\tn{st}(T) \dar \\
H^1_\tn{reg}(\cE,G) \rar{\tilde\kappa} \arrow[rr,bend left=20,"\nu" near end] &
\wt{Y}_{+,\tn{tor}}(G) \arrow[from=u,crossing over]\rar{\lambda} &
\cC_\tn{st}(G).
\end{tikzcd}
\]
By a diagram chase, it suffices to show commutativity
of the other four faces of the diagram above,
excluding the bottom triangle.
The upper triangle commutes by Lemma~\ref{Lemtncompat}.
The back square commutes by naturality of the Newton map.
The left square commutes by Lemma~\ref{lemtorussquare}.
Finally, a short direct argument shows that the right square commutes.
\end{proof}

We can now use the Newton decomposition to define the rigid Kottwitz map.

\begin{Definition} \label{kottdef}
The \textit{rigid Kottwitz map} is the unique map
$\kappa\colon H^1_\tn{reg}(\cE,G)\to\ov{Y}_{+,\tn{tor}}(G)$
such that for every rigid Newton centralizer $M$
and finite subgroup $A\subseteq Z(M)$,
the restriction of $\kappa$ to $H^1_\tn{reg}(\cE,G)_M$ is the composite map below,
in which the left map is the dashed arrow of Proposition~\ref{keyprop1},
which factors through $\ov{Y}_{+,\tn{tor}}(A\to G)$ by Lemma~\ref{kottnewtcompat}:
\[
H^1_\tn{reg}(\cE,A\to G)_M \to \ov{Y}_{+,\tn{tor}}(A\to G)
\into \ov{Y}_{+,\tn{tor}}(G).
\]
\end{Definition}

In this language, Lemma~\ref{kottnewtcompat}
implies that the Newton map factors through the Kottwitz map.
in the sense that the following diagram commutes,
where $\lambda$ is the map of Lemma~\ref{lambdalemma}:
\begin{equation}\label{newtonkottwitz}
\begin{tikzcd}
H^1_\tn{reg}(\cE,G) \rar{\kappa} \drar{\nu} &
\ov{Y}_{+,\tn{tor}}(G) \dar{\lambda} \\
& \cC_\tn{st}(G).
\end{tikzcd}
\end{equation}

\begin{Proposition} \label{kottinj}
The Kottwitz map $\kappa\colon H^1_\tn{reg}(\cE,G)\to\ov{Y}_{+,\tn{tor}}(G)$
is injective and natural in~$G$.
\end{Proposition}

\begin{proof}
First, we will show that $\kappa$ is injective.
Let $a,b\in H^1_\tn{reg}(\cE,G)$
and suppose that $\kappa(a) = \kappa(b)$.
Then $\nu(a) = \nu(b)$ as well, by \eqref{newtonkottwitz}.
Let $x,y\in Z^1_\tn{reg}(\cE,G)$ be regular representatives of $a$ and $b$.
The assumption that $\nu(a) = \nu(b)$
implies that we may choose $x$ and~$y$ with $x|_u = y|_u$.
Let $M\defeq Z_G^\circ(x|_u) = Z_G^\circ(y|_u)$.
It will also be useful to replace $x$ with an $M$-cohomologous
representative in order to assume that $x \in Z^{1}(\cE, A \to S)$
for an $F$-rational maximal torus $S$ of $M$.

Write $G_\cE$ for $G(\ov{F})$ as an $\cE$-module
with action inflated from the usual $\Gamma$ action.
The cocycles $x$ and $y$ give twisted forms $^{x}G$ and $^{y}G$
of $G_{\mc{E}}$ and, via taking the images $\bar x$ and $\bar y$
of $x$ and $y$ in $Z^{1}(\Gamma, M_{\tn{ad}}(\ov{F}))$,
twisted forms $^{\bar{x}}M$ and $\prescript{\bar{y}}{}M$ of $M$ such that
$(\prescript{\bar{x}}{}M)_{\mc{E}} \times^{M_{\mc{E}}} G_{\mc{E}} = \prescript{x}{}G$
and similarly for $y$.
For a group scheme $N$ over $\ov{F}$ on which $\cE$
acts by algebraic automorphisms along with an algebraic group monomorphism
$A_{\ov{F}} \to Z(N)$ compatible with the $\cE$-action on both schemes,
denote by $H^{1}(\cE, A \to N)$ the subset of $H^{1}(\cE, N(\ov{F}))$
consisting of classes $[c]$ which have a representative $c$ such that
$c|_{u(\ov{F})}$ is a morphism of algebraic groups from $u(\ov{F})$ to $A(\ov{F})$,
which is necessarily defined over $F$.

Take the images of $[x], [y] \in H^{1}(\cE, A \to M)$ in
$H^{1}(\cE, A \to \prescript{\bar{x}}{}M)$
under the standard twisting map $[c] \mapsto [cx^{-1}]$
and denote them by $[x]'$ and $[y]'$,
so that $[x]'$ is, by design, the neutral class.
These elements lie in the subset
$H^{1}(F, \prescript{\bar{x}}{}M) \subseteq H^{1}(\cE, A \to \prescript{\bar{x}}{}M)$
since their restriction to $u$ is trivial.
This twisting bijection reduces the problem to showing that the image of $[y]'$
in $H^{1}(\cE, A \to \prescript{x}{}G)$, or, equivalently, in
$H^{1}(\cE, A \to \prescript{x}{}N_{G}(M))$, is the neutral class.

Define $H^{1}(\cE, A \to \prescript{x}{}N_{G}(M))^{\circ}$
to be the image of the map
\[
H^{1}(\cE, A \to \prescript{\bar{x}}{}M)
\longrightarrow H^{1}(\cE, A \to \prescript{x}{}N_{G}(M))
\]
so that, by construction, the image of 
$[y]' \in H^{1}(\cE, A \to \prescript{\bar{x}}{}M)$
in $H^{1}(\cE, A \to \prescript{x}{}N_{G}(M))$
lies in $H^{1}(\cE, A \to \prescript{x}{}N_{G}(M))^{\circ}$.
The short exact sequence of nonabelian $\cE$-modules
\begin{equation*}
1 \to \prescript{\bar{x}}{}M  \to \prescript{x}{}N_{G}(M) \to W(G,M) \to 1
\end{equation*}
yields the exact sequence
\begin{equation*}
W(G,M)(F) \to
H^{1}(\cE, A \to \prescript{\bar{x}}{}M) \to
H^{1}(\cE, A \to \prescript{x}{}N_{G}(M))^{\circ} \to 1,
\end{equation*}
Here we have deliberately omitted the twist from the rightmost term
of the short exact sequence and the leftmost term of the above sequence
because the $\cE$-action is inflated from the usual $\Gamma$-action.
Therefore, using the action of $W(G,M)(F)$
defined in \eqref{Weyltransaction} and the discussion surrounding it,
\[
H^{1}(\cE, A \to \prescript{x}{}N_{G}(M))^{\circ}
= H^{1}(\cE, A \to \prescript{\bar{x}}{}M)/W(G,M)(F).
\]
Hence $\iota_{[A \to M]}$ induces a pointed bijection
\begin{equation*}
H^{1}(\cE, A \to \prescript{x}{}N_{G}(M))^{\circ}
\isoto  \ov{Y}_{+,\tn{tor}}(A \to \prescript{\bar{x}}{}M)/W(G,M)(F).
\end{equation*} 

Finally, one uses the commutative diagram
(cf. \eqref{kotttwistdiag} from the proof of Proposition \ref{keyprop1})
\begin{equation*}
\begin{tikzcd}[column sep=2.5cm]
H^{1}(\mc{E}, A \to M) \arrow["\cdot x^{-1}"]{r} \arrow["\iota_{[A \to M]}"]{d} & H^{1}(\mc{E}, A \to \prescript{\bar{x}}{}M) \arrow["\iota_{[A' \to \prescript{\bar{x}}{}M]}"]{d}  \\
\wt{Y}_{+,\tn{tor}}(A\to M;S)
\arrow["+\iota_{[A \to M]}(x^{-1})"]{r} &
\wt{Y}_{+,\tn{tor}}(A\to M;S)
\end{tikzcd}
\end{equation*}
along with the commutative diagram
\begin{equation*}
\begin{tikzcd}
H^{1}(\cE, A \to N_{G}(M))^{\circ} \arrow{r} & H^{1}(\cE, A \to \prescript{x}{}N_{G}(M))^{\circ} \arrow["\sim"]{r} & \ov{Y}_{+,\tn{tor}}(A \to \prescript{\bar{x}}{}M)/W(G,M)(F) \\\
H^{1}(\cE, A \to M) \arrow{u} \arrow{r} & H^{1}(\cE, A \to \prescript{\bar{x}}{}M) \arrow{u} \arrow["\sim"]{r} & \ov{Y}_{+,\tn{tor}}(A \to \prescript{\bar{x}}{}M) \arrow{u},
\end{tikzcd}
\end{equation*} 
where the left maps in both rows are twisting by $[x^{-1}]$,
to deduce that the image of $[y]'$ is trivial in
$\ov{Y}_{+,\tn{tor}}(A \to \prescript{\bar{x}}{}M)/W(G,M)(F)$
and is thus the neutral class in $H^{1}(\cE, A \to \prescript{x}{}N_{G}(M))$.

Finally, to prove that $\kappa$ is natural,
let $f\colon G\to G'$ be a homomorphism of reductive groups,
and consider the cube below, where
\begin{enumerate}
\item $H^1_\tn{bas}(\cE,M)_{M'}$ is classes represented by 
$x\in Z^1_\tn{bas}(\cE,M)$ with $Z_{G'}^\circ(f\circ x|_u) = M'$,
\item $H^1_\tn{reg}(\cE,G)_{M,M'}$ is classes represented by 
$x\in Z^1_\tn{reg}(\cE,G)$ with 
$Z_G^\circ(x|_u)=M$ and $Z_{G'}^\circ(f\circ x|_u)=M'$, and
\item $\ov{Y}_{+,\tn{tor}}(M)_{M'}$ is the preimage of $\ov{Y}_{+,\tn{tor}}(M')_{M'}$:
\end{enumerate}
\[
\begin{tikzcd}[column sep=small, row sep=small]
H^1_\tn{bas}(\cE,M)_{M'} \drar\arrow{rr}\arrow{dd} &&
H^1_\tn{bas}(\cE,M')_{G'\textnormal{-reg}} \drar\arrow{dd} & \\
& H^1_\tn{reg}(\cE,G)_{M,M'} \arrow[rr,crossing over] &&
H^1_\tn{reg}(\cE,G')_{M'} \arrow{dd} \\
\ov{Y}_{+,\tn{tor}}(M)_{M'} \drar\arrow{rr} &&
\ov{Y}_{+,\tn{tor}}(M')_{M'} \drar \\
& \ov{Y}_{+,\tn{tor}}(G) \arrow{rr}\arrow[from=uu,crossing over] &&
\ov{Y}_{+,\tn{tor}}(G') \\
\end{tikzcd}
\]
By a diagram chase, it suffices to show the commutativity
of all faces except possibly the front face.
The top face commutes by functoriality of $H^1(\cE,-)$.
The bottom face commutes by functoriality of $\ov{Y}_{+,\tn{tor}}$.
The left and right faces commute by definition of~$\tilde\kappa$.
The back face commutes by naturality of the basic Kottwitz map
\cite[Theorem~4.11]{Kaletha16a}.
\end{proof}

\section{Dual interpretations of $\mc{E}$-cohomology}\label{dualEsec}

\subsection{$L$-embeddings from twisted Levi subgroups}\label{Lembsubsec}
This subsection reviews Kaletha's $L$-groups ${}^LM_\pm$
and their canonical $L$-embeddings ${}^LM_\pm\to{}^LG$,
following \cite{Kaletha19b}.
These $L$-groups have a dual interpretation
in terms of a certain double cover $M(F)_\pm$ of~$M(F)$,
which we will come to in Section~\ref{basicdoublesubsec}.

Kaletha's theory constructs, for any twisted Levi subgroup $M$ of~$G$,
a possibly non-split extension ${}^LM_\pm$ of $\Gamma$ by~$\widehat M$
as well as a $\widehat G$-conjugacy class of admissible embeddings ${}^LM_\pm\to{}^LG$.
The precise definitions of this extension and its $L$-embedding
are designed to be explicit and independent of all choices.
We refer the reader to \cite{Kaletha19b} for full details
and Lemma~\ref{lem:lgrouppullback} for a poor man's version.

We start by explaining how to pass twisted Levi subgroups to the Galois side,
an idea implicit in \cite{Kaletha19b} that we spell out for completeness.

\begin{Lemma}\label{qsplitlem1}
Every stable conjugacy class of twisted Levi subgroups
contains a quasi-split member.
\end{Lemma}

\begin{proof}
This follows immediately from \cite[Lemma 6.4]{Kaletha19b} applied to $\xi = \tn{id}$.
\end{proof}

\begin{Lemma} \label{lem:surjectivecenter}
Let $G$ be a quasi-split reductive group
and $T\subseteq G$ a minimal Levi subgroup.
Then the map $H^1(F,Z(G)) \to H^1(F,T)$ is surjective.
\end{Lemma}

\begin{proof}
It suffices to show that $H^1(F,T/Z(G)) = 1$.
This vanishing follows from the fact that the torus $T/Z(G)$ is induced,
that is, a product of Weil restrictions of split tori:
since $T/Z(G)$ is a minimal Levi subgroup of the adjoint quotient of~$G$,
the $\Gamma$-module $X^*(T/Z(G))$ has a $\Gamma$-stable basis,
namely, some $\Gamma$-stable basis of the root system since $G$ is quasi-split.
\end{proof}

We can now interpret stable conjugacy classes of twisted Levi subgroups on the Galois side.

\begin{Lemma} \label{lem:dualizetwistedlevis}
There is a canonical bijection between the sets of
\begin{enumerate}
\item
stable conjugacy classes $J$ of twisted Levi subgroups of~$G$,
\item
$W(G,T_G)$-conjugacy classes $P$ of pairs $(z,U)$ with
$z\in Z^1(F,W)$ and $U\subseteq R(G,T_G)$
such that the action of $\Gamma$ on $R(G,T_G)_z$
preserves some basis of~$U$, and
\item
$\widehat G$-conjugacy classes $\widehat J$ of pairs $(\scrM,s)$
with $\scrM\subseteq\widehat G$ a Levi subgroup and
$s\colon\Gamma\to W({}^LG,\scrM)$
a section of the projection to~$\Gamma$.
\end{enumerate}
\end{Lemma}

\noindent In the second part, the subscript $z$ in $R(G,T_G)_z$
twists the $\Gamma$-set structure on $R(G,T_G)$ using~$z$,
and in the third part, $W({}^LG,\scrM)\defeq N_{{}^LG}(\scrM)/\scrM$.

\begin{proof}
Note that if a section $s$ exists then
the projection $N_{{}^LG}(\scrM)\to\Gamma$ is surjective,
or in other words, the $\widehat G$-conjugacy class
of $\scrM$ is $\Gamma$-stable.
Recall our fixed $F$-pinning $(\scrB_G,\scrT_G,\{X_{\hat\alpha_G}\})$ of~$G$
and $\Gamma$-stable pinning $(B_G,T_G,\{X_{\alpha_G}\})$ of~$\widehat G$.
The identification $X^*(T_G) \simeq X_*(\scrT_G)$ induces
a $\Gamma$-equivariant identification of Weyl groups
$W(G,T_G) \simeq W(\widehat G,\scrT_G)$ and based root data
$(R^\vee(G,T_G),\Delta^\vee_G) \simeq (R(\widehat G,\scrT_G),\Delta_{\widehat G})$,
the bases being obtained from $B_G$ and $\scrB_G$.

For the bijection $(1)\leftrightarrow (2)$,
given $J$, there is a quasi-split element $M$ of $J$ defined over~$F$
by \cite[Lemma 6.4]{Kaletha19b}.
Choose a Levi subtorus $T_M\subseteq M$
and an element $g\in G(\ov{F})$ such that $T_M = {}^g T_G$.
Then $dg\in Z^1(F,N_G(T_G))$.
Let $z$ be the image of $dg$ in $Z^1(F,W(G,T_G))$,
let $U \defeq R({}^{g^{-1}}M,T_G)$,
and let $P$ be the $W$-conjugacy class of $(z,U)$.

Let us verify that the construction is independent of choices.
Choosing $g'$ instead of $g$ conjugates $(z,U)$
by the image in $W(G,T_G)$ of $g'^{-1}g$.
Since all Levi subtori of $M$ are $M(F)$-conjugate,
if we choose $T_M'$ instead of~$T_M$
with $T_M' = {}^m T_M$ for $m\in M(F)$,
then choosing the element $mg$ to conjugate $T_G$ to~$T_M'$
produces the pair $(z,U)$.
Lastly, if $M'$ is another quasi-split element of~$J$ defined over~$F$
with Levi subtorus $T_{M'}$, then there is $h\in G(\ov{F})$
that takes $(M,T_M)$ to $(M',T_{M'})$
and is such that the resulting map between these pairs is defined over~$F$.
If we choose $g$ and $g'$ with $T_M = {}^g T_G$
and $T_{M'} = {}^{g'} T_G$ and such that $h = g' g^{-1}$,
then the two resulting pairs $(z,U)$ and $(z',U')$ will be equal.

All in all, we have constructed a map $(1)\to(2)$.
This map is surjective by Raghunathan's theorem \cite[Corollary~1.3]{Raghunathan04}.
To see that the map is injective, suppose that
two twisted Levi subgroups $M$ and~$M'$
together with the choices $T_M = {}^gT_G\subseteq M$
and $T_{M'} = {}^{g'}T_G\subseteq M'$
as above produce $W$-conjugate pairs $(z,U)$ and $(z',U')$.
After possibly translating $g'$ on the right by an element of $N_G(T_G)(\ov{F})$,
we may assume that $(z,U) = (z',U')$.
Working backwards from the condition $z = z'$,
we find that the map $T_M\to T_{M'}$ of conjugation by $g'g^{-1}$
is defined over~$F$,
or in other words, that $d(g'g^{-1})\in Z^1(F,T_M)$.
It is now clear that $(M,T_M)$ is $G(\ov{F})$-conjugate to $(M',T_{M'})$,
but not yet that the two are stably conjugate.
By Lemma~\ref{lem:surjectivecenter},
there is $h\in G(\ov{F})$ such that $dh\in Z^1(F,Z(M))$
and the image of $dh$ in $Z^1(F,T_M)$
is cohomologous to~$d(g'g^{-1})$,
meaning that $d(g'g^{-1}t) = dh$
for some $t\in T(\ov{F})$.
Replacing $g$ by $t^{-1}g$ in our original choice
then ensures that $g'g^{-1}$
will not only stably conjugate $T_M$ to~$T_{M'}$,
but also $M$ to~$M'$.
Hence $(1)\to(2)$ is a bijection.

For the bijection $(2)\leftrightarrow(3)$,
given $\widehat J$, let $(\scrM,s)\in\widehat J$.
Choose a Borel-torus pair $(\scrB_\scrM,\scrT_\scrM)$ in~$\scrM$
and choose $g\in\widehat G$ taking this pair into $(\scrB_G,\scrT_G)$.
The choice of Borel-torus pair yields an identification
\[
N_{W({}^LG,\scrT)}(R(\scrM,\scrT),\Delta)  \xrightarrow\sim W({}^LG,\scrM)
\]
(see \cite[Corollary~3]{Howlett80}),
where $\Delta$ is the basis corresponding to~$\scrB_\scrM$.
Composing $s$ with the inverse of this identification
and the inclusion into $W({}^LG,\scrT_\scrM)$
results in a homomorphism $\Gamma\to W({}^LG,\scrT_\scrM)$,
and after conjugation by~$g$,
a homomorphism $\Gamma\to W({}^LG,\scrT_{\widehat G})$,
whose first component is the desired cocycle
$z\in Z^1(F,W(\widehat G,\scrT_{\widehat G}))$.
Let $U = R({}^g\scrM,\scrT_{\widehat G})$.
We leave it to the reader to verify that
the $W(\widehat G,\scrT_{\widehat G})$-conjugacy class of $(z,U)$
is independent of the choices of $\scrM$, $\scrT_\scrM$,
$\scrB_\scrM$, and~$g$.
The reverse construction follows the same lines,
and one may even choose $\scrM$ to contain~$\scrT_{\widehat G}$.
\end{proof}

\noindent Once we fix a twisted Levi subgroup $M\subseteq G$ and pair $(\widehat M,s)$,
there is a bijection between
twisted Levi subgroups of~$G$ containing~$M$
and Levi subgroups of~$\widehat G$ containing~$\widehat M$
and stabilized by~$s$.

\begin{Construction}
Let $\phi$ be an $L$-parameter for~$G$.
Whenever $\phi$ normalizes a Levi subgroup $\mc{M}$ of~$G$,
the conjugation action of $W_F$ on~$\mc{M}$ induced by~$\phi$
yields a map $W_F\to W({}^LG,\mc{M})$,
which factors through a finite quotient
and thus yields in turn a map $\Gamma\to W({}^LG,\mc{M})$.
Since $\phi(\SL_2)\subseteq\mc{M}$,
as $N_{\widehat G}(\mc{M})^\circ=\mc{M}$,
the same final map results whether we
obtain the action of $W_F$
by naively restricting $\phi$ to~$W_F$
or passing to its semisimplification.
Together with Lemma~\ref{lem:dualizetwistedlevis},
this construction assigns to every $\phi$-stable
Levi subgroup of~$\widehat G$
a stable conjugacy class of twisted Levi subgroups of~$G$.
\end{Construction}

The data appearing in (3) of Lemma~\ref{lem:dualizetwistedlevis}
can be used to give a coarse definition of Kaletha's $L$-group ${}^LM_\pm$,
a (typically non-split) extension of $\widehat M$ by~$\Gamma$.

\begin{Lemma} \label{lem:lgrouppullback}
Let $M$ be a twisted Levi subgroup of~$G$
with corresponding pair $(\widehat M,s)$ as in Lemma~\ref{lem:dualizetwistedlevis}.
\begin{enumerate}
\item
The group ${}^LM_\pm$ is the pullback of the extension
$N_{{}^LG}(\widehat M)$ of $\widehat M$ by $W({}^LG,\widehat M)$ along~$s$:
\begin{center}
\begin{tikzcd}
1 \rar & \widehat M \rar\dar[equals] & {}^LM_\pm \rar\dar & \Gamma \rar\dar{s} & 1 \\
1 \rar & \widehat M \rar & N_{{}^LG}(\widehat M) \rar & W({}^LG,\widehat M) \rar & 1.
\end{tikzcd}
\end{center}

\item
An $L$-parameter~$\phi$ for~$G$ factors through the embedding
${}^LM_\pm\to {}^LG$ if and only if $\phi$ normalizes $\widehat M$
and the map $\Gamma\to W({}^LG,\widehat M)$
induced by~$\phi$ is $W(\widehat G,\widehat M)$-conjugate to~$s$.
\end{enumerate}
\end{Lemma}

\begin{proof}
For the first part, the commutativity of this diagram follows from 
the definition of Kaletha's $L$-embedding
${}^LM_\pm\to{}^LG$, \cite[(6.1)]{Kaletha19b}:
his element $\sigma_{M,G}\rtimes\sigma_G$
maps in $W({}^LG,\widehat M)$ to our~$s(\sigma)$.
The fact that the righthand square is a pullback then follows,
by a diagram chase, from the exactness of the two rows.
For the second part, use the universal property of the pullback.
\end{proof}

Despite the usefulness of Lemma~\ref{lem:lgrouppullback},
it is very far from specifying an $L$-embedding ${}^LM_\pm\to{}^LG$,
even up to conjugacy.
Kaletha's work goes much further and constructs a conjugacy class
of $L$-embeddings extending the canonical conjugacy class of embeddings
$\widehat M\to\widehat G$,
using the Tits lift and the Langlands--Shelstad theory of gauges.
Because the details of the construction are not critical for our application,
we refer the reader to \cite[\S~6.1]{Kaletha19b} for more details.

\begin{Proposition}[{\cite[Lemmas 6.11 and 6.12]{Kaletha19b}}]\label{tLeviembprop}
For $T \subseteq M \subseteq G$ a twisted Levi subgroup with maximal torus $T$:
\begin{enumerate}
\item
The canonical $\widehat{G}$-conjugacy of embeddings $\widehat{M} \hookrightarrow \widehat{G}$ extends to a $\widehat{G}$-conjugacy class of $L$-embeddings $^{L}M_{\pm} \to \prescript{L}{}G$.
\item
The canonical $\widehat{M}$-conjugacy class of embeddings $\widehat{T} \to \widehat{M}$ extends to a $\widehat{M}$-conjugacy class of $L$-embeddings $^{L}T_{\pm} \to \prescript{L}{}M_{\pm}$.
\item
Every morphism ${}^LT_\pm\to{}^LG$ in this conjugacy class factors
as the composition of morphisms in the conjugacy classes of the two previous points.
\end{enumerate}
\end{Proposition}

\begin{proof}
If $t_{M,G}$ and $t_{T,M}$ denote the Tits cocycles corresponding to the maps $\sigma \mapsto n(\sigma_{(M,G)})$ and $\sigma \mapsto n(\sigma_{(T,M)})$ respectively, then the claimed maps are given by 
\begin{equation*}
\widehat{M} \boxtimes_{t_{M,G}} \Gamma \to \widehat{G} \rtimes W_{F}, \hspace{.25cm} \widehat{\jmath}_{M,G}(m) \boxtimes \sigma \mapsto m\cdot n(\sigma_{M,G}) \rtimes \sigma 
\end{equation*}
and 
\begin{equation*}
\widehat{T} \boxtimes_{t_{T,M}} \Gamma \to\widehat{M} \boxtimes_{t_{M,G}} \Gamma, \hspace{.25cm} \widehat{\jmath}_{T,M}(t) \boxtimes \sigma \mapsto t\cdot n(\sigma_{T,M}) \boxtimes \sigma.
\end{equation*}
The compatibility of these with $^{L}T_{\pm} \to \prescript{L}{}G$ follows from the identity $n(\sigma_{T,M}) \cdot n(\sigma_{M,G}) = n(\sigma_{T,G})$. We refer the reader to \cite{Kaletha19b} for the details and the proof of the first statement.
\end{proof}

To finish, we discuss Weyl group actions.
Recall that, for a quasi-split connected reductive group $M$,
there is a short exact sequence of $F$-group schemes 
$ 1 \to \underline{\tn{Inn}}_{M} \to \underline{\tn{Aut}}_{M}
\to \underline{\tn{Out}}_{M} \to 1 $
which is split via a homomorphism corresponding to a choice of $\Gamma$-stable pinning $\mc{P}_{M}$ for $M$. This splitting gives an identification
$\underline{\tn{Out}}_M(F)\isoto\tn{Aut}_\Gamma(\widehat M)/\tn{Inn}_\Gamma(\widehat M)$,
as explained in \cite[\S~2.3.4]{Kaletha23}, and also an inclusion
$s\colon\underline{\tn{Out}}_{M}(F)
\hookrightarrow \tn{Aut}_{\Gamma}(\widehat{M})$,
where every automorphism in the image of $s$ preserves the fixed pinning
$\mc{P}_{\widehat{M}}$. When $M$ is in addition a twisted Levi subgroup of~$G$,
we can compose $s$ with the map $W(G,M)(F) \to \underline{\tn{Out}}_{M}(F)$
to obtain an action of the group $W(G,M)(F)$
by $\Gamma$-stable automorphisms of $\widehat{M}$
which preserve the pinning $\mc{P}_{\widehat{M}}$.
Denote the $F$-rational automorphism of $M$
(resp.\ $\Gamma$-equivariant automorphism of $\widehat{M}$)
corresponding to $w \in W(G,M)(F)$ by $\theta_{w}$ (resp.\ $\theta_{w}^{\vee}$).
It is straightforward to check that the conjugacy classes
of admissible embeddings $M\into G$ and $\widehat M\into \widehat G$
are stable under the action of $W(G,M)(F)$.

\subsection{Universal covers} \label{sec:univ}
Given a complex torus~$S$, define $S_\tn{univ} \defeq \varprojlim_n S$,
the inverse limit over $n$th power maps.
In other words, $S_\tn{univ}$
is the complex group scheme of multiplicative type such that
$X^*(S_\tn{univ}) = X^*(S)\otimes\bbQ$.
More generally, given a complex reductive group~$H$, define
the (algebraic) \textit{universal cover} of~$H$ by
\[
H_\tn{univ} \defeq Z(H)^\circ_\tn{univ}\times H_\tn{sc},
\]
where $Z(H)^\circ_\tn{univ} \defeq (Z(H)^\circ)_\tn{univ}$.
The universal cover is functorial in the following sense.

\begin{Lemma} \label{lemunivfunctorial}
Every isomorphism $i\colon H \simeq H'$ of complex reductive groups
admits a unique lift to an isomorphism
$i_\tn{univ}\colon H_\tn{univ} \simeq H_\tn{univ}'$ of universal covers.
\end{Lemma}

\begin{proof}
Clearly there exists such a lift $i_\tn{univ}$:
when $H$ is a torus this follows from the description
of the universal cover in terms of character lattices,
when $H$ is semisimple this is a well-known property of the simply-connected cover,
and in general one takes the product of the lifts from these two cases.
For uniqueness, we need only show that any lift $i_\tn{univ}$
is the product of lifts $Z(H)^\circ_\tn{univ} \to Z(H')^\circ_\tn{univ}$
and $H_\tn{sc} \to H'_\tn{sc}$.
There are no nonzero homomorphisms $H_\tn{sc} \to Z(H')^\circ_\tn{univ}$,
and any homomorphism $Z(H)^\circ_\tn{univ}\to H'_\tn{sc}$ fitting into a lift
would necessarily factor through the preimage of $Z(H')$ in $H'_\tn{sc}$,
namely, $Z(H_\tn{sc}')$.
However, there are no nonzero maps from the universal cover of a torus
to a finite multiplicative group.
\end{proof}

\begin{Notation} \label{plusnotation}
Given a subgroup $A$ of~$H$, we write $A^+_{(H)}$
for the preimage of $A$ in~$H_\tn{univ}$.
\end{Notation}

We warn the reader that this notation is different from the one used in \cite{Kaletha16a}.
In that setting, where $A$ is a $\Gamma$-stable subgroup of~$\widehat G$,
it is used to denote the preimage of $A^\Gamma$ in $\widehat G_\tn{univ}$.
In our notation, this would be denoted by $A^{\Gamma,+}$.

Using the description of isogenies of reductive groups in terms of root data
(see, for instance, \cite[II.1.13]{Jantzen03}),
one can show that if $S$ is a maximal torus of~$H$, then
\begin{equation} \label{dualunivtorus}
X^*(S^+_{(H)}) = (\bbZ R^\vee(H,S))^*
\defeq \{v\in X^*(S)\otimes\bbQ \colon \langle v,\alpha^\vee\rangle\in\bbZ
\tn{ for all }\alpha\in R(H,S)\},
\end{equation}
where $\bbZ R^\vee(H,S)$ is the $\bbZ$-linear span of the set~$R^\vee(H,S)$
and where the dual is taken in the $\bbQ$-vector space $X^*(S)\otimes\bbQ$.
This dual will be a lattice if and only if $H$ is semisimple.

\begin{Lemma} \label{toruscoversquares}
Let $M_1,M_2\subseteq H$ be Levi subgroups
containing a maximal torus~$S$
and let $A\subseteq S$ be a subgroup.
The square at left is a pullback
and the square at right is a pushout:
\[
\begin{tikzcd}[every arrow/.append style={two heads}]
A^+_{(M_1\cap M_2)} \rar\dar &
A^+_{(M_1)} \dar \\
A^+_{(M_2)} \rar & A^+_{(H)}.
\end{tikzcd}
\qquad
\begin{tikzcd}[every arrow/.append style={hook}]
X^*(\pi_0(A^+_{(H)})) \rar\dar &
X^*(\pi_0(A^+_{(M_1)})) \dar \\
X^*(\pi_0(A^+_{(M_2)})) \rar &
X^*(\pi_0(A^+_{(M_1\cap M_2)})).
\end{tikzcd}
\]
\end{Lemma}

\begin{proof}
For the left square, it suffices to prove the result for $S=A$,
since that square is the pullback of the analogous square for~$S$
along the maps induced by the inclusion $A\into S$.
Passing to character groups, it suffices to prove that
the image of the square under the functor $X^*$ is a pushout,
and this claim amounts, by \eqref{dualunivtorus},
to showing that
\[
(\bbZ R^\vee(M_1\cap M_2,S))^* =
(\bbZ R^\vee(M_1,S))^* + (\bbZ R^\vee(M_2,S))^*.
\]
The equality above follows from the general fact
that if $A$ and $B$ are free subgroups of a rational vector space~$V$
then $A^* + B^* = (A\cap B)^*$ whenever $A/(A\cap B)$ and $B/(A\cap B)$
are free, so that we may use bases to compute the duals.
This general fact applies to our situation because
any root system basis of $R^\vee(H,S)$
contains bases of the Levi subsystems $R^\vee(M_1,S)$ and $R^\vee(M_2,S)$.

For the right square, we have already seen in the previous paragraph
that the square would be a pushout if we removed the $\pi_0$'s from the diagram.
Since $X^*(\pi_0(A))$ is the torsion subgroup of $X^*(A)$,
for any multiplicative type group~$A$,
it suffices to show that if we apply the torsion subgroup functor
to any commutative square of abelian groups having the properties that
(1) all maps are injections and (2) the square is a pushout,
then the image square again has these two properties.
Checking this compatibility is a straightforward exercise.
\end{proof}

\subsection{Tate--Nakayama duality and restriction to the band}%
In this subsection, we give interpretations in terms of $\widehat G$
of various aspects of the Newton and Kottwitz map.

We have the following dual interpretation of the diagram
\eqref{TNbanddiag} from Section~\ref{admsbgpsubsec}:

\begin{Proposition}\label{Newtonbandprop}
For each twisted Levi subgroup $M$ of~$G$,
there is a canonical injection 
\begin{equation}\label{Newtonbandmap}
X^{*}(\pi_{0}(Z(\widehat{G})^{\Gamma,\circ,+}_{(\widehat M)}))
\hookrightarrow \Hom_{\ov{F}}(\mu, Z(M))
\end{equation}
making the diagram below commute,
where the top map is restriction,
the bottom map is restriction to $u$ followed by the isomorphism of Lemma~\ref{bandhoms},
and the left map is Tate--Nakayama duality:
\begin{equation}\label{dualTNdiag}
\begin{tikzcd}
X^{*}(\pi_{0}(Z(\widehat{M})^{\Gamma,+}))
\arrow{r} \arrow["\iota^{-1}","\sim"']{d}& X^{*}(\pi_{0}(Z(\widehat{G})^{\Gamma,\circ,+}_{(\widehat M)})) \arrow[hookrightarrow]{d} \\
H^{1}(\cE, Z(M) \to M) \arrow{r} & \Hom_{\ov{F}}(\mu, Z(M)).
\end{tikzcd}
\end{equation}
\end{Proposition}

\begin{proof}
First, fix some $n\in\mathbb{N}$ and let
$Z_n$ be the preimage of $Z(M_\tn{der})$ in $Z(M)$
under the $n$th-power map,
so that $\widehat M_\tn{univ} \simeq \varprojlim \widehat{M/Z_n}$.
We will prove the analogous result with $Z(\widehat G)^{\Gamma,\circ,+}$
replaced by the preimage of $Z(\widehat G)^{\Gamma,\circ}$ in $\widehat{M/Z_n}$,
with $Z(M)$ replaced by $Z_n$,
and where the ``$+$'' superscript is used for preimages of subgroups of
$Z(\widehat{M})$ in $\widehat{M/Z_n}$,
suppressing the subscript ``$(\widehat{M/Z_n})$'' for readability.

We identify $\widehat{M/Z_n} \to \widehat{M}$ with the map
$\widehat{M}_{\Sc} \times Z(\widehat{M})^{\circ} \to \widehat{M}$
given by the usual map on the first factor and the $n$th-power map on the second,
writing $\widehat{Z_n}$ for the kernel of the map.
Under this identification, $Z(\widehat{G})^{\Gamma,\circ,+}$
is the subgroup of elements in $Z(\widehat{M})^\circ$
whose $n$th power lies in $Z(\widehat{G})^{\Gamma,\circ}$.
Then $(Z(\widehat{G})^{\Gamma,+,\circ})^\circ = Z(\widehat{G})^{\Gamma,\circ}$
and the inclusion
$\widehat{Z_n} \hookrightarrow Z(\widehat{G})^{\Gamma,\circ,+}$ 
induces an isomorphism
\begin{equation*}
\frac{\widehat{Z_n}}{\{\tn{id}\} \times (Z(\widehat{G})^{\Gamma,\circ})[n]}
\longisoto \pi_{0}(Z(\widehat{G})^{\Gamma,\circ,+}).
\end{equation*}
The desired map (at level $n$) is the composition 
\begin{equation}\label{klevelmap1}
X^{*}( \pi_{0}(Z(\widehat{G})^{\Gamma,\circ,+})) \isoto X^{*}(\widehat{Z_n}/(Z(\widehat{G})^{\Gamma,\circ})[n]) \hookrightarrow X^{*}(\widehat{Z_n}) \isoto \Hom_{\ov{F}}(\mu_n, Z_n),
\end{equation}
where the rightmost map is given by the identifications
\begin{equation*}
X^{*}(\widehat{Z_n}) \isoto \Hom_{\Z}(X^{*}(Z_n), \Q/\Z)
= \Hom_{\ov{F}}(\mu_n, Z_n).
\end{equation*}
It is clear that the map \eqref{klevelmap1}
is compatible with the projection maps as $n$ varies,
giving a well-defined injection
$X^{*}(\pi_{0}(Z(\widehat{G})^{\Gamma,\circ,+}))
\hookrightarrow \Hom(\mu_{\ov{F}}, Z(M)_{\ov{F}})$ as claimed.

To check compatibility with the Tate--Nakayama isomorphism,
we may again work at the $n$th level.
Choose an elliptic maximal torus $T$ of~$M$.
By \eqref{TNbanddiag} and the ensuing discussion,
following the diagram \eqref{dualTNdiag} down and then to the right
yields the map
\begin{equation*}
X^{*}(\pi_{0}(Z(\widehat{M})^{\Gamma,+}))
\longisoto \frac{(X_{*}(T/Z_n)/X_{*}(T_{M,\psc}))^{N}}{I\cdot (X_{*}(T)/X_{*}(T_{M,\psc}))}
\longrightarrow \frac{X_{*}(T/Z_n)}{X_{*}(T)}
\end{equation*}
and following the same diagram in the other direction corresponds to the composition
\begin{equation*}
X^{*}(\pi_{0}(Z(\widehat{M})^{\Gamma,+}))
\to X^{*}(\widehat{Z_n}/(Z(\widehat{G})^{\Gamma,\circ}[n])) \to X^{*}(\widehat{Z_n})
\isoto \frac{X_{*}(T/Z_n)}{X_{*}(T)},
\end{equation*} 
where the first map is restriction,
using that $\widehat{Z_n} \cap (Z(\widehat{M})^{\Gamma,+,\circ})
= (Z(\widehat{G})^{\Gamma,\circ})[n]$,
and the last map is the canonical identification used in \eqref{klevelmap1}.
The two preceding displayed composite maps agree, giving the result.
\end{proof}

\begin{Corollary}\label{isogenyfaccor}
Let $M \into N \into G$ be a chain of twisted Levi subgroups
and let $\widehat M\into\widehat N\into\widehat G$
be dual admissible embeddings.
The corresponding commutative diagram below is a pullback square:
\[
\begin{tikzcd}
X^*\bigl(\pi_0\bigl(Z(\widehat{G})^{\Gamma,\circ,+}_{(\widehat N)}\bigr)\bigr)
\rar[hookrightarrow] \dar[hookrightarrow] &
X^*\bigl(\pi_0\bigl(Z(\widehat{G})^{\Gamma,\circ,+}_{(\widehat M)}\bigr)\bigr)
\dar[hookrightarrow] \\
\Hom_{\ov{F}}(\mu,Z(N)) \rar[hookrightarrow] &
\Hom_{\ov{F}}(\mu,Z(M)).
\end{tikzcd}
\]
\end{Corollary}

\begin{proof}
The image of $\chi$ lies in $\Hom_{\ov{F}}(\mu, Z(M))$ if and only if, by the commutativity of \eqref{dualTNdiag}, it has a preimage $\tilde{\chi} \in X^{*}(\pi_{0}(Z(\widehat{M})^{\Gamma,+}))$ whose image $[x] \in H^{1}(\cE, Z(M) \to M)$ via Tate--Nakayama lies in the subset $H^{1}(\cE, Z(N) \to M)$. We obtain the desired result by applying functoriality of the Tate--Nakayama duality isomorphism.
\end{proof}

\begin{Definition} \label{gphiregular}
Let $\phi$ be an $L$-parameter for $G$
and let $\mc{N}$ be a Levi subgroup of~$G$ normalized by~$\phi$.
A character $\chi\in X^*(\pi_{0}(Z(\mc{N})^{\Gamma,+}_{(\mc{N})}))$
is \textit{$(G,\phi)$-regular} if it does not lie in $X^*(\pi_{0}(Z(\mc{N})^{\Gamma,+}_{(\mc{M})}))$ for any Levi subgroup $\mc{N}\subsetneq\mc{M}\subseteq\widehat G$. Write $X^*(\pi_{0}(Z(\mc{N})^{\Gamma,+}_{(\mc{N})}))_\Gphireg$ for the set of these.
\end{Definition}

\begin{Remark}\label{restrem}
By Proposition \ref{Newtonbandprop}, the $G$-regularity of $\chi\in X^*(\pi_{0}(Z(\mc{N})^{\Gamma,+}_{(\mc{N})}))$ is determined by its restriction to $\pi_{0}(Z(\widehat{G})^{\Gamma,\circ,+}_{(\widehat{N})})$.
\end{Remark}

Recall \eqref{infB}, which defines
$\wt{Y}_{+,\tn{tor}}(B\to G)$ for $B$ infinite.

\begin{Proposition} \label{TNgphireg}
Let $\phi$ be an $L$-parameter of~$G$,
let $\mc{N}$ be a Levi subgroup of~$G$ normalized by~$\phi$,
and let $N$ be a corresponding quasi-split twisted Levi subgroup of~$G$.
Then the Tate--Nakayama duality isomorphisms restrict to isomorphisms
\[
\begin{tikzcd}%
H^1_\tn{bas}(\cE,N)_\Greg \rar{\sim} %
& \ov{Y}_{+,\tn{tor}}(Z(N)\to N)_\Greg \rar{\sim}%
& X^*(\pi_0(Z(\widehat N)^{\Gamma,+}_{(\widehat N)}))_\Gphireg %
\end{tikzcd}
\]
\end{Proposition}

\begin{proof}
The first isomorphism follows from Lemma~\ref{kottnewtcompat},
and the second from Corollary~\ref{isogenyfaccor}.
\end{proof}

To finish, we generalize Tate--Nakayama duality to the non-basic setting.

\begin{Proposition}\label{dualKott1} %
Let $N$ be a quasi-split twisted Levi subgroup of~$G$.
\begin{enumerate}
\item
There is a canonical identification
$\wt{Y}_{+,\tn{tor}}(Z(N)\to G)
\longisoto X^{*}(\pi_{0}(Z(\widehat{G})^{\Gamma,+}_{(\widehat N)}))/W(G,N)(F)$.
\item
This identification is functorial for embeddings
$G\into G'$ of $G$ as a twisted Levi subgroup.
\end{enumerate}
\end{Proposition}

\noindent Taking $W(G,N)(F)$-orbits makes this map independent of the choice of embedding.

\begin{proof}
This essentially follows from the proof of \cite[Proposition 5.3]{Kaletha16a},
which we summarize.
As explained there, for any finite central subgroup $A$ in $N$ there is an isomorphism
\begin{equation*}
\wt{Y}_{+,\tn{tor}}(A\to G;T)
\longisoto X^{*}(\pi_{0}(Z(\widehat{G})_{(\widehat{T/A})}^{\Gamma,+})),
\end{equation*}
Taking $W(G,N)(F)$-orbits and then the colimit over all such
$A$ gives the identification,
and functoriality is straightforward to check.
\end{proof}

Using Proposition~\ref{dualKott1},
we can construct a unique map
\begin{equation} \label{fullTN}
\begin{tikzcd}
\tau\colon H^1_\tn{L-reg}(\cE,G) \rar[hookrightarrow] &
\displaystyle\bigsqcup_{[N]}
\frac{X^{*}(\pi_{0}(Z(\widehat{G})^{\Gamma,+}_{(\widehat N)}))}{W(G,N)(F)},
\end{tikzcd}
\end{equation}
where $[N]$ ranges over stable conjugacy classes of twisted Levi subgroups of~$G$,
whose restriction to each set $H^1_\tn{reg}(\cE,G)_N$ is the composition
\[
\begin{tikzcd}
H^1_\tn{reg}(\cE,G)_N \rar[hookrightarrow]{\kappa} &
\ov{Y}_{+,\tn{tor}}(G)_N \rar[hookrightarrow] &
\wt{Y}_{+,\tn{tor}}(Z(N)\to G) \rar{\sim} &
\dfrac{X^{*}(\pi_{0}(Z(\widehat{G})^{\Gamma,+}_{(\widehat N)}))}{W(G,N)(F)},
\end{tikzcd}
\]

\section{The rigid local Langlands conjectures for double covers}\label{llcsec}

\subsection{The basic conjectures}\label{basicdoublesubsec}
First we recall the rigid refined local Langlands conjectures for quasi-split $G$.
Let $\phi$ be an $L$-parameter for~$G$ and let $\mathfrak{w}$ be a Whittaker datum for $G$. 
Kaletha's basic rigid local Langlands conjectures \cite[(5.7)]{Kaletha16a}
can be reformulated uniformly, removing the dependence on~$Z$, as follows.

\begin{Conjecture}\label{rigidLLC1} 
There is a commutative diagram with horizontal bijections
\[
\begin{tikzcd}[row sep=small]
\Pi_\phi^{\cE,\tn{bas}} \arrow{r}{\iota_{\mathfrak{w}}}[swap]{\sim} \arrow{d} & \Irr(\pi_{0}(S_{\phi}^{+})) \arrow{d} \\
H^1_\tn{bas}(\mc{E}, G) \rar{\sim} & \pi_{0}(Z(\widehat{G})^{\Gamma,+})^{*}.
\end{tikzcd}
\]
\end{Conjecture}

Here $\Pi_\phi^{\cE,\tn{bas}}$ is a set of isomorphism classes of representations
$(G',\psi',z',\pi)$ of rigid inner twists of $G$ as in \cite[\S~5.1]{Kaletha16a}
but where no finite central subgroup $Z$ is fixed,
the $+$-notation forms preimages in $\widehat{G}_\tn{univ}$
as in Notation~\ref{plusnotation},
the bottom map is Tate--Nakayama duality isomorphism of Theorem~\ref{Tatenak1},
the left-hand column extracts the underlying cohomology class,
and the right-hand column is formation of central characters.
The bijection $\iota_\mathfrak{w}$ is expected to satisfy many additional properties,
such as the endoscopic character identities (cf. \cite[\S~5.4]{Kaletha16a}).

Our goal in this subsection is to extend Conjecture \ref{rigidLLC1}
to Kaletha's double covers $M(F)_\pm$ of twisted Levi subgroups of~$G$,
and to prove that the two variants of the conjecture are equivalent.

On the automorphic side, we will not recall the precise definition of $M(F)_\pm$,
referring the reader to \cite[\S~3.1, Definitions~6.2 and~6.10]{Kaletha19b}.
We note only that $M(F)_\pm$ is equipped with a map $M(F)_\pm\to M(F)$
whose kernel $\{\pm1\}$ is normal of order two,
and that the construction of $M(F)_\pm$ depends implicitly on the embedding of~$M$ in~$G$,
though this dependence is not reflected in the notation.
Call a character $\theta\colon M(F)_{\pm} \to \mathbb{C}^{\times}$ \textit{genuine}
if $\theta(-1) = -1$.

On the Galois side, an \textit{$L$-parameter} for a double cover $M(F)_{\pm}$
is a continuous homomorphism $W_{F}' \to {}^LM_{\pm}$
that commutes with the two maps to $\Gamma$
and has semisimple restriction to~$W_F$. 
Two such parameters are \textit{equivalent} if they are $\widehat{M}$-conjugate. 
A first indication of the utility of this notion
is the local Langlands correspondence for $S(F)_\pm$.

\begin{Theorem}[{\cite[Theorem~3.16.1]{Kaletha19b}}]
There is a canonical bijection between the set of genuine characters of $S(F)_{\pm}$ and equivalence classes of $L$-parameters $W_{F} \to \prescript{L}{}S_{\pm}$.
\end{Theorem}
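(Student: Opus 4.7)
The plan is to observe that both sides of the claimed bijection are naturally torsors under $\Hom_{\tn{cts}}(S(F),\C^\times)$ and then use a set of $\chi$-data to simultaneously pin down a base point on each side, reducing the statement to classical local Langlands for the torus $S$. Verifying that the resulting bijection does not depend on the choice of $\chi$-data will be the main point.

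On the smooth side the torsor structure is transparent: pulling back a character $\eta$ of $S(F)$ along $S(F)_\pm\to S(F)$ gives a character trivial on $\{\pm1\}$, and multiplying a genuine character by it yields another genuine character. A fixed set of $\chi$-data $\{\chi_\alpha\}$, together with the orbit-by-orbit construction recalled just above the Theorem, produces a distinguished genuine character $\tilde\chi$ of $S(F)_\pm$, pinning down a bijection
\[
\Hom_{\tn{cts}}(S(F),\C^\times)\xrightarrow{\sim}\{\text{genuine characters of }S(F)_\pm\},\qquad \eta\mapsto\tilde\chi\cdot\eta.
\]

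On the Galois side, equivalence classes of parameters $W_F\to {}^LS_\pm$ form a torsor under $H^1_{\tn{cts}}(W_F,\widehat{S})$, which by classical LLC for $S$ is canonically identified with $\Hom_{\tn{cts}}(S(F),\C^\times)$. The same $\chi$-data determine a cochain $c_{\{\chi_\alpha\}}\in C^1(\Gamma_{E/F},\widehat{S})$ satisfying $dc_{\{\chi_\alpha\}}=t_p^{-1}$ for the gauge $p$ assigning $+1$ to positive roots (the standard Langlands--Shelstad recipe: orbit-by-orbit one uses local class field theory applied to each $\chi_\alpha$). The map $s\boxtimes\sigma\mapsto s\cdot c_{\{\chi_\alpha\}}(\sigma)\rtimes\sigma$ is then an isomorphism of $W_F$-extensions ${}^LS_\pm^{(p)}\xrightarrow{\sim}{}^LS$, and I would declare the distinguished parameter matching $\tilde\chi$ to be the parameter corresponding under this isomorphism (and classical LLC for $S$) to the trivial character of $S(F)$.

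The hard part is canonicity. Replacing $\{\chi_\alpha\}$ by $\{\chi'_\alpha\}$ multiplies $\tilde\chi$ by the character $\tilde\zeta$ of $S(F)$ built from the system $\zeta_\alpha:=\chi'_\alpha/\chi_\alpha$---which satisfies $\zeta_{-\alpha}=\zeta_\alpha^{-1}$, $\zeta_{\sigma\alpha}=\zeta_\alpha\circ\sigma^{-1}$, and crucially $\zeta_\alpha|_{F_{\pm\alpha}^\times}=1$---and simultaneously multiplies $c_{\{\chi_\alpha\}}$ by a cocycle $z_{\{\zeta_\alpha\}}\in Z^1(\Gamma,\widehat{S})$. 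The required compatibility is that the class of $z_{\{\zeta_\alpha\}}$ corresponds under classical LLC for $S$ to $\tilde\zeta$. Since both $\tilde\zeta$ and $z_{\{\zeta_\alpha\}}$ are assembled orbit-by-orbit from the same input, this reduces to the analogous statement for each induced torus $J_O$ separately, which by Shapiro's lemma is classical LLC for $\Res_{F_\alpha/F}\bbG_m$ in the asymmetric case and for $\Res_{F_{\pm\alpha}/F}\Res^1_{F_\alpha/F_{\pm\alpha}}\bbG_m$ in the symmetric case---the condition $\zeta_\alpha|_{F_{\pm\alpha}^\times}=1$ being exactly what forces the symmetric contribution to factor through the norm-one subtorus and match the target $H^1$.
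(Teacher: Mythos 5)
Your proposal follows essentially the same route as the paper's proof (which is itself a summary of Kaletha's argument): fix $\chi$-data, use it to pin down a genuine character $\tilde\chi$ on the smooth side and a parameter $\varphi_\chi$ on the Galois side, then transport the classical Langlands correspondence for the torus $S$ along these base points. The one difference is that the paper simply asserts independence of the choice of $\chi$-data and defers to the cited reference, whereas you sketch the verification explicitly---reducing the comparison of $\tilde\zeta$ with $z_{\{\zeta_\alpha\}}$ orbit-by-orbit and invoking Shapiro's lemma together with classical LLC for $\Res_{F_\alpha/F}\bbG_m$ (asymmetric orbits) and $\Res_{F_{\pm\alpha}/F}\Res^1_{F_\alpha/F_{\pm\alpha}}\bbG_m$ (symmetric orbits), with the constraint $\zeta_\alpha|_{F_{\pm\alpha}^\times}=1$ ensuring the symmetric contributions land in the right place; this is the correct mechanism and fills in what the paper leaves implicit.
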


Next, we extend the notion of rigid inner twists to double covers.
Given an inner twist $\psi\colon M_{\ov{F}} \to M'_{\ov{F}}$, the induced map on abelianizations is defined over~$F$. We can use this map to transfer the data $R(M_{\tn{ab}}, G) \to X^{*}(M_{\tn{ab}})$ defining the double cover $M(F)_\pm$ (see \cite[\S~6.3]{Kaletha19b}) to similar data for $M'$, via the composition 
\begin{equation*}
R(M_{\tn{ab}}, G) \to X^{*}(M_{\tn{ab}}) \xrightarrow{\psi^{-1}} X^{*}(M'_{\tn{ab}}).
\end{equation*}
We then define $M'_{\tn{ab}}(F)_{\pm}$ using this data, and define $M'(F)_{\pm}$ as the pullback of the diagram $M'(F) \to M'_{\tn{ab}} \leftarrow M'_{\tn{ab}}(F)_{\pm}$.
Recall that an isomorphism $(M_1,\psi_1,z_1)\isoto(M_2,\psi_2,z_2)$
of rigid inner twists of~$M$
is a pair $(f,m)$  where $f\colon M_{1} \to M_{2}$ is an $F$-rational isomorphism,
$m \in M(\ov{F})$ satisfies $\psi_{1} \circ \Inn(m) = f \circ \psi_{2}$,
and ${}^m z_1 = z_2$.

\begin{Definition}\label{pmRITdef} \begin{enumerate}
\item
A \textit{rigid inner twist} of $M(F)_{\pm}$ is a triple $(M'(F)_{\pm}, \psi, z)$
with $(M', \psi, z)$ a rigid inner twist of $M$
and $M'(F)_{\pm}$ obtained as above from $M(F)_\pm$ and~$\psi$.

\item
An \textit{isomorphism} $(M_{1}(F)_{\pm}, \psi_1, z_{1})
\isoto (M_{2}(F)_{\pm}, \psi_2, z_{2})$
is an isomorphism $(f,m)\colon (M_{1}, z_{1}) \longrightarrow (M_{2}, z_{2})$
of rigid inner twists.
\end{enumerate}
\end{Definition}

\begin{Lemma}\label{fpm}
An isomorphism
$(f,m)\colon (M_{1}(F)_{\pm}, z_{1}) \isoto (M_{2}(F)_{\pm}, z_{2})$
induces an isomorphism of extensions 
$f_\pm\colon M_1(F)_\pm\isoto M_2(F)_\pm$:
\begin{equation}
\begin{tikzcd}\label{doubleRITeq1}
0 \arrow{r} &
\{\pm 1\} \arrow[equals]{d} \arrow{r} &
M_{1}(F)_{\pm} \dar{f_{\pm}}[swap]{\sim} \arrow{r} &
M_{1}(F) \dar{f}[swap]{\sim} \arrow{r} &
1 \\
0 \arrow{r} & \{\pm 1\} \arrow{r} & M_{2}(F)_{\pm} \arrow{r} & M_{2}(F) \arrow{r} & 1.
\end{tikzcd}
\end{equation}
\end{Lemma}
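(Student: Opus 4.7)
The plan is to exploit the definition of the double covers as pullbacks along the abelianization, reducing everything to a check on tori. Since each $M_i(F)_\pm$ is defined as the pullback of
\[
M_i(F) \to M_{i,\tn{ab}}(F) \leftarrow M_{i,\tn{ab}}(F)_\pm,
\]
to construct $f_\pm$ it suffices to produce a map $f_{\tn{ab},\pm}\colon M_{1,\tn{ab}}(F)_\pm \to M_{2,\tn{ab}}(F)_\pm$ compatible with $f_\tn{ab}\colon M_{1,\tn{ab}} \to M_{2,\tn{ab}}$ (which is $F$-rational because $f$ is). The universal property of the pullback then delivers $f_\pm$, and chasing the construction shows that it restricts to the identity on $\{\pm 1\}$ (since the double cover $M_{i,\tn{ab}}(F)_\pm$ is built out of $\{\pm 1\}$-extensions $J_O(F)_\pm$ whose kernels are identified with $\{\pm 1\}$ independently of any inner-form data).

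The core step is verifying that $f_\tn{ab}$ is compatible with the $\Sigma$-set data used to build the two double covers. Recall that by construction the $\Sigma$-sets $R(M_{i,\tn{ab}}, G)$ share the same underlying admissible $\Sigma$-set $R(M_\tn{ab}, G)$; they differ only in the maps to characters, which are
\[
\psi_i^*\colon R(M_\tn{ab}, G) \longrightarrow X^*(M_\tn{ab}) \xrightarrow{\;\psi_i^{-1,*}\;} X^*(M_{i,\tn{ab}}).
\]
The defining relation $\psi_2 \circ \Ad(m) = f \circ \psi_1$ becomes, after passing to the abelianization (where $\Ad(m)$ is trivial), the equality $\psi_{2,\tn{ab}} = f_\tn{ab} \circ \psi_{1,\tn{ab}}$. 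Dualizing gives $f_\tn{ab}^* \circ \psi_{2,\tn{ab}}^{-1,*} = \psi_{1,\tn{ab}}^{-1,*}$, so the two maps $R(M_\tn{ab}, G) \to X^*(M_{i,\tn{ab}})$ are intertwined by $f_\tn{ab}^*$.

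Given this compatibility, the functoriality of the double-cover construction from \S\ref{Lembsubsec} (applied orbit-by-orbit to each $\Sigma$-orbit $O$) produces a canonical lift $f_{\tn{ab},\pm}$ of $f_\tn{ab}$: for asymmetric orbits this is trivial since $J_O(F)_\pm = J_O(F) \times \{\pm 1\}$, and for symmetric orbits the lift is induced by the norm-residue double cover on each $F_\alpha$, which depends only on the orbit and not on how it is mapped into the character lattice. Pushing out along $\prod_O\{\pm1\}\to\{\pm1\}$ preserves this, so $f_{\tn{ab},\pm}$ descends to the desired map of $\{\pm 1\}$-extensions. Forming the pullback with $f$ gives $f_\pm$, and the diagram \eqref{doubleRITeq1} commutes by the universal property.

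The main obstacle, though minor, is verifying that the two abelianization maps really are intertwined by $f_\tn{ab}^*$ on the nose (rather than only up to inner twist), which is exactly where Lemma \ref{Mablem} and the vanishing of inner automorphisms on the abelianization are used. Once this is in hand, the remainder is formal functoriality of the double-cover construction.
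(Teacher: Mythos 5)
Your argument is correct and takes essentially the same route as the paper: you pass to abelianizations, where $\Ad(m)$ vanishes and the identity $f_\tn{ab}^* \circ (\psi_2^{-1})^* = (\psi_1^{-1})^*$ shows the two maps $R(M_\tn{ab},G)\to X^*(M_{i,\tn{ab}})$ are intertwined, and then invoke the functoriality of Kaletha's double-cover construction (\cite[\S 5]{Kaletha19b}). The paper's proof compresses the orbit-by-orbit verification and the pullback/pushout bookkeeping into a single citation of that functoriality, which you have merely unpacked.
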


\begin{proof}
By construction, the map $f^{-1}\colon X^{*}(M_{1,\tn{ab}}) \isoto X^{*}(M_{2,\tn{ab}})$ induced by $f^{-1}$ is compatible with the maps of both groups to $X^{*}(M_{\tn{ab}})$.
Hence, by \cite[\S~5]{Kaletha19b}, the isomorphism $M_{1}(F) \isoto M_{2}(F)$ lifts canonically to an isomorphism $f_\pm\colon M_{1}(F)_{\pm} \isoto M_{2}(F)_{\pm}$ making \eqref{doubleRITeq1} commute.
\end{proof}

\begin{Definition}
Recall that a representation $\pi$ of $M(F)_\pm$ is \textit{genuine} if $\pi(-1) = -\tn{id}$.
\begin{enumerate}
\item
A \textit{genuine representation} of the rigid inner twist
$(M'(F)_{\pm},\psi,z)$ is a $4$-tuple $(M'(F)_{\pm},\psi,z,\pi)$
where $\pi$ is a genuine representation of $M'(F)_{\pm}$.
\item
An \textit{isomorphism} $(f,m)\colon (M_{1}(F)_{\pm}, \psi_1, z_{1},\pi_{1})
\isoto (M_{2}(F)_{\pm}, \psi_2, z_{2}, \pi_{2})$
between two such representations is an isomorphism of rigid inner twists
such that the induced isomorphism $f_{\pm}\colon M_1(F)_\pm \isoto M_2(F)_\pm$
identifies the genuine representations $\pi_{1}$ and $\pi_{2}$. 
\end{enumerate}
 \end{Definition}

We can now reformulate Conjecture \ref{rigidLLC1} for double covers.
Let $M$ be a twisted Levi subgroup of~$G$
which we take to be quasi-split by Lemma~\ref{qsplitlem1}
and fix a Whittaker datum $\mathfrak{w}_{M}$ for $M$.
Given an $L$-parameter $\phi_\pm \colon W_{F}' \to {}^LM_{\pm}$,
let $S_{\phi_{\pm}} \defeq Z_{\widehat{M}}(\phi_{\pm})$
and $S_{\phi_\pm}^+ \defeq (S_{\phi_\pm})^+_{(\widehat M)}$.

\begin{Theorem}\label{rigidLLC1pm}
Let $\phi_{M,\pm} \colon W_{F}' \to \prescript{L}{}M_{\pm}$
be an $L$-parameter.
If Conjecture~\ref{rigidLLC1} holds for $M$
and is compatible with twists by cocentral characters,
then there is a finite subset $\Pi_{\phi_{M,\pm}}$
of genuine representations of rigid inner twists of $M(F)_{\pm}$
and a commutative diagram with horizontal bijections
\begin{equation*}
\begin{tikzcd}
\Pi_{\phi_{M,\pm}}^{\cE,\tn{bas}} \arrow["\iota_{\mathfrak{w}_M,\pm}"]{r} \arrow{d} & \Irr(\pi_{0}(S_{\phi_{M,\pm}}^{+})) \arrow{d} \\
H^1_\tn{bas}(\mc{E}, M) \arrow{r} & X^{*}(\pi_{0}(Z(\widehat M)^{\Gamma,+})),
\end{tikzcd}
\end{equation*}
where the bottom map is the one from Theorem \ref{Tatenak1}, the left-hand column extracts the underlying torsor, the right-hand column is induced by taking central characters, and the $+$ superscript forms the preimage in $\widehat M_\tn{univ}$.
\end{Theorem}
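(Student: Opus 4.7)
The plan is to reduce the double-cover correspondence to the ordinary correspondence (Conjecture \ref{rigidLLC1} for $M$) by choosing a set of $\chi$-data for the admissible $\Sigma$-set $R(M_{\tn{ab}}, G) \to X^{*}(M_{\tn{ab}})$. Fix such a set $\{\chi_{\alpha}\}$ once and for all; by the recipe reviewed in \S \ref{Dualprelimsec} it produces a genuine character $\chi$ of $M_{\tn{ab}}(F)_{\pm}$, hence by pullback a genuine character of $M(F)_{\pm}$, and a Langlands parameter $\varphi_{\chi}\colon W_{F} \to {}^{L}M_{\tn{ab},\pm}$. Pushing $\varphi_{\chi}$ out along $Z(\widehat{M})^{\circ} \to \widehat{M}$ realizes an identification of extensions of $W_{F}$ by $\widehat{M}$ between ${}^{L}M_{\pm}$ and ${}^{L}M$; under this identification, an $L$-parameter $\phi_{M,\pm}$ for $M(F)_{\pm}$ corresponds canonically to an $L$-parameter $\phi_{M} := \phi_{M,\pm} \cdot \varphi_{\chi}^{-1}$ for $M$, and because $\varphi_{\chi}$ is valued in (the image of) $Z(\widehat{M})^{\circ}$, the inclusion $S_{\phi_{M,\pm}}^{+} = S_{\phi_{M}}^{+}$ of subgroups of $\widehat{M/Z}$ is an equality.

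On the automorphic side, the genuine character $\chi$ exists compatibly on every rigid inner twist $(M'(F)_{\pm},z)$, because $M'(F)_{\pm}$ was defined using the same $\Sigma$-set $R(M_{\tn{ab}},G)$ (cf.\ Lemma \ref{Mablem} and the discussion following). Hence tensoring with $\chi^{\pm 1}$ induces, for each rigid inner twist, a bijection between genuine representations of $M'(F)_{\pm}$ and honest representations of $M'(F)$; this bijection is equivariant under isomorphisms of rigid inner twists in the sense of Definition \ref{pmRITdef}, using Lemma \ref{fpm}. I would then declare
\begin{equation*}
\Pi_{\phi_{M,\pm}}^{Z} := \{(M'(F)_{\pm}, z, \chi \otimes \pi)\ :\ (M', z, \pi) \in \Pi_{\phi_{M}}^{Z}\},
\end{equation*}
and define $\iota_{\mathfrak{w},\pm}$ by sending $(M'(F)_{\pm}, z, \chi \otimes \pi)$ to $\iota_{\mathfrak{w}}(M', z, \pi) \in \tn{Irr}(\pi_{0}(S_{\phi_{M}}^{+})) = \tn{Irr}(\pi_{0}(S_{\phi_{M,\pm}}^{+}))$, where the last identification uses the equality of centralizers above. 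The commutativity of the square then reduces, via the identifications just set up, to the corresponding commutativity in Conjecture \ref{rigidLLC1}: the left column ``extract the torsor'' is insensitive to tensoring with $\chi$, and the central character map on the right is insensitive to the shift $\varphi_{\chi}$ since $\varphi_{\chi}$ factors through the center of $\widehat{M}$.

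The main obstacle, and the only subtle verification, is that this construction is independent of the choice of $\chi$-data. If $\{\chi_{\alpha}'\}$ is another set, then by \cite[Lemma 3.19]{Kaletha19b} (or direct computation with the formulas recalled above) the quotient $\chi/\chi'$ descends to a character $\eta$ of $M_{\tn{ab}}(F)$, while the quotient of parameters $\varphi_{\chi}/\varphi_{\chi'}$ is the Langlands parameter $\varphi_{\eta}$ of $\eta$ under the Langlands correspondence for the torus $M_{\tn{ab}}$. Thus the transferred parameter becomes $\phi_{M}' = \phi_{M} \cdot \varphi_{\eta}^{-1}$ and the transferred representation becomes $\pi' = \eta^{-1} \otimes \pi$. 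The assumed compatibility of Conjecture \ref{rigidLLC1} with twists by co-central characters says exactly that $\iota_{\mathfrak{w}}(M', z, \eta^{-1} \otimes \pi) = \iota_{\mathfrak{w}}(M', z, \pi)$ once we reinterpret the target using the shifted parameter, so $\iota_{\mathfrak{w},\pm}$ is well-defined. The same argument, together with the equivariance of the sections \eqref{Hsection} and the functoriality of the rigid Tate–Nakayama isomorphism (Theorem \ref{Tatenak1}), shows commutativity with the bottom row, completing the proof.
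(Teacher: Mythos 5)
Your proposal follows essentially the same route as the paper's proof: fix $\chi$-data for $R(M_{\tn{ab}},G)$, use the resulting genuine character and isomorphism ${}^L M \simeq {}^L M_\pm$ to transfer $\phi_{M,\pm}$ to a parameter for $M$, twist representations of each rigid inner twist by the transferred genuine character, and invoke compatibility with co-central twists to show independence of the $\chi$-data. The only (harmless) stray remark is the appeal to the sections \eqref{Hsection}, which play no role in this theorem and belong to the later non-basic enhancement construction.
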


\begin{proof}
This proof will mostly follow the arguments of \cite[Remark 6.14]{Kaletha19b}.
We thus fix a finite central subgroup $Z$
and work with $H^1(\cE,Z\to M)$ and preimages in $\widehat{M/Z}$.
Choose $\chi$-data $\{\chi_{\alpha}\}$ for $R(M_{\tn{ab}},G)$. On the one hand, these data determine a genuine character $\chi_{M}$ of $M_{\tn{ab}}(F)_{\pm}$, which then pulls back to a genuine character of $M(F)_{\pm}$. On the other hand, by \cite[Fact 6.13]{Kaletha19b}, these data determine an isomorphism of Weil forms of $L$-groups $\iota_{\chi_M} \colon {}^{L}M \to \prescript{L}{}M_{\pm}$ . Together, $\phi_{M,\pm}$ and $\iota_{\chi_{M}}$ determine an $L$-parameter $\phi\colon W_{F}' \to \prescript{L}{}M$ and an isomorphism $s_\chi\colon S_{\phi_{M}}^{+} \to S_{\phi_{M,\pm}}^{+}$.

Given a representation $(M', \psi, z, \pi) \in \Pi^{Z}_{\phi_{M}}$, the isomorphism $\psi$ transfers the $\chi$-data $\{\chi_{\alpha}\}$ to $\chi$-data for the $\Sigma$-set $R(M'_{\tn{ab}}, G) \to X^{*}(M'_{\tn{ab}})$, which determine a genuine character $\chi_{M'}$ of $M'_{\tn{ab}}(F)_{\pm}$. Twisting $\pi$ by $\chi_{M'}$ gives a genuine representation of $M'(F)_{\pm}$, and we thus obtain an element $(M', \psi, z, \pi \cdot \chi_{M'}) \in \Pi_{\phi_{M,\pm}}^{Z}$. One checks easily that an isomorphic representation of $(M', \psi, z, \pi)$ would yield an isomorphic genuine representation of a rigid inner twist.

Twisting by each $\chi_{M'}$ induces a bijection $\chi_{-}^{-1}\colon \Pi_{\phi_{M}}^{Z} \isoto \Pi_{\phi_{M,\pm}}^{Z}$ and thus we can define $\iota_{\mathfrak{w}_{M},\pm}$ as the composition $s_{\chi} \circ \iota_{\mathfrak{w}} \circ (\chi_{-}^{-1})$. The assumption of compatibility with twisting implies that this composition does not depend on the choice of $\chi$-data. Commutativity of the diagram follows from the analogous commutativity in Conjecture \ref{rigidLLC1}.
\end{proof}

The action of $W(G,N)(F)$ on $N(F)$ and its $L$-parameters
described in Section~\ref{basicdoublesubsec}
extends to an action on $N(F)_\pm$ and its $L$-parameters,
and the expected compatibility with the local Langlands correspondence,
\cite[\S~2.3.4]{Kaletha23}, extends to the setting of double covers.
Given $w\in W(G,N)(F)$ and
a representation $\dot{\pi}_{\pm} = (N', \psi, z,\pi_{\pm})$
of a rigid inner twist of $N(F)_{\pm}$ as in Definition~\ref{pmRITdef}, let
\[
\theta_{w} \cdot (N', \psi, z,\pi_{\pm}) = (N', \psi\circ\theta_w^{-1},
\theta_{w}(z), \pi_{\pm}). 
\]
The group $N'$ is unchanged by Lemma~\ref{miracleLem},
although its rigidification $z$ need not be.

\begin{Conjecture}\label{ICMconj}(\cite[Conjecture 2.12]{Kaletha23})
We have
\[
\Pi^{\cE,\tn{bas}}_{\theta_{w} \circ \phi_{\pm}}
\defeq \theta_{w} \cdot \Pi_{\phi_{\pm}}
= \{\theta_{w} \cdot \dot{\pi} \mid \dot{\pi} \in \Pi_{\phi_{\pm}}\}, 
\qquad
\iota_{\mathfrak{w}_{N},\pm}(\theta_{w} \cdot \dot{\pi})
= (\theta_{w}^{\vee} \circ \phi, \rho \circ (\theta_{w}^{\vee})^{-1}),
\]
where $(\phi, \rho) = \iota_{\mathfrak{w}_{N},\pm}(\dot{\pi})$
is the enhanced rigid parameter corresponding to $\dot{\pi}$.
\end{Conjecture}

\subsection{Factorization of discrete $L$-parameters}
Let $\phi$ be an $L$-parameter for~$G$.
In this subsection we study the set $\tn{Lev}(\widehat G)^\phi$
of Levi subgroups of~$\widehat G$ normalized by~$\phi$,
or equivalently, by Lemma~\ref{lem:lgrouppullback},
the set of $L$-embeddings ${}^LM_\pm\to{}^LG$ through which $G$ factors.

\begin{Lemma} \label{lemfinlevis}
Let $\phi$ be an $L$-parameter for~$G$.
If $\phi$ is discrete, then $\tn{Lev}(\widehat G)^\phi$ is a finite set.
\end{Lemma}

\begin{proof}
Since $\tn{Lev}(\widehat G)^\phi$ is a closed subvariety
of the space of Levi subgroups of~$\widehat G$,
it suffices to show that $\dim(\tn{Lev}(\widehat G)^\phi) = 0$.
Suppose to the contrary that $\tn{Lev}(\widehat G)^\phi$
has a component of positive dimension.
Choose a Levi~$\widehat M$ in this component,
so that the dimension of the tangent space of $\tn{Lev}(\widehat G)^\phi$
at~$\widehat M$ is nonzero.
Since the Lie algebra of $\tn{Lev}(\widehat G)$ at~$\widehat M$
is $\hat{\frak g}/\hat{\frak m}$
and the formation of the Lie algebra commutes with fixed points,
$(\hat{\frak g}/\hat{\frak m})^\phi\neq0$.
Now consider the short exact sequence
\[
\begin{tikzcd}
0 \rar & \hat{\frak m}^\phi \rar & \hat{\frak g}^\phi \rar
& (\hat{\frak g}/\hat{\frak m})^\phi \rar & 0,
\end{tikzcd}
\]
which is exact on the right because $\phi$ is Frobenius semisimple.
Then $\frak z(\hat{\frak g})^\phi\subseteq\frak m^\phi \neq \hat{\frak g}^\phi$,
meaning that $\phi$ is discrete.
\end{proof}

Recall that a twisted Levi subgroup $M$ of~$G$ is \textit{elliptic} (in~$G$)
if $Z^\circ(M)/Z^\circ(G)$ is anisotropic,
or equivalently, if $M$ contains a maximal torus that is elliptic in~$G$.
Since every semisimple reductive $F$-group contains an anisotropic maximal torus
\cite[Theorem~10.5.1]{KalethaPrasad},
when two twisted Levi subgroups of $G$ are stably conjugate,
one of them is elliptic in $G$ if and only if the other one is.
The elliptic twisted Levi subgroups
are the only ones that arise in $\tn{Lev}(\widehat G)^\phi$:

\begin{Corollary}\label{ellipticcor}
Let $\phi$ be an $L$-parameter and
let $M$ be a twisted Levi subgroup such that
$\phi$ factors through ${}^LM_\pm\to{}^LG$.
If $\phi$ is discrete, then $M$ is elliptic in~$G$.
\end{Corollary}

\begin{proof}
This follows from the standard fact that for a reductive group $H$,
the split rank of $Z(H)$ equals the rank of $Z(\widehat{H})^{\Gamma,\circ}$,
applied to $H = G$ and $H =M$ separately.
\end{proof}

Hence every discrete $L$-parameter $\phi$
yields a finite set $\tn{Lev}(\widehat G)^\phi$
of Levi subgroups, which forms a poset under inclusion.
The maximum element is $\widehat G$,
but there may be many minimal elements.

\subsection{The non-basic conjectures for discrete parameters}
Let $\phi$ be a discrete $L$-parameter for~$G$.
Suppose $\phi$ normalizes some Levi subgroup~$\widehat N$,
yielding the quasi-split twisted Levi subgroup $N$ of~$G$.
By Lemma~\ref{lem:lgrouppullback},
the $L$-parameter~$\phi$ factors through~${}^LN_\pm$,
yielding an $L$-parameter $\phi_{N,\pm}$ for $N(F)_\pm$.
We are interested in the compound $L$-packet 
\[
\Pi_\phi^\cE(N_\pm)
\defeq \{\dot{\pi}_{\pm} = (N', z, \pi_{\pm}) \in \Pi_{\phi_{N,\pm}}
\mid [z] \in H^{1}_{\tn{bas}}(\cE, N)_\Greg\}
\subseteq \Pi_{\phi_{N,\pm}}^\cE,
\]
as well as the $W(G,N)(F)$-stable subset $\Pi^\cE_{\phi}(N'_\pm,z)$
of representations of $(N',\psi, z)$.

\begin{Definition}
A \textit{rigid enhancement} of a discrete $L$-parameter~$\phi$
is a pair $(\mc{N},\rho)$ where
\begin{enumerate*}
\item $\mc{N}$ is a $\phi$-stable Levi subgroup of $\widehat G$
and \item $\rho$ is an irreducible representation of
$\pi_0(S^+_{\phi_{N,\pm},(\mc{N})})$
\end{enumerate*}
satisfying the following condition:
\begin{equation} \label{defrigenh}
\parbox{0.6\linewidth}{
The character $\chi$ of $\pi_0(Z(\widehat N)_{(\widehat N)}^{\Gamma,+})$
for which the restriction of $\rho$ is $\chi$-isotypic
is $(G,\phi)$-regular (Definition~\ref{gphiregular}).
}
\end{equation}
Write $\Phi^\cE_\phi$ for the set of rigid enhancements of~$\phi$.
The group $S_\phi$ acts on $\Phi^\cE_\phi$ by conjugation,
using Lemma~\ref{lemunivfunctorial},
and two enhancements are \textit{equivalent}
if they lie in the same orbit. As in Remark \ref{restrem}, the condition \eqref{defrigenh} can be detected by restricting $\chi$ to $\pi_{0}(Z(\widehat{G})^{\Gamma,\circ,+}_{(\widehat{N})})$.
\end{Definition}

Recall from Section~\ref{Lembsubsec} that
a choice of pinning of~$N$ yields a family of $F$-rational automorphisms of $N$
(resp. automorphisms of $^{L}N_{\pm}$) given by acting by elements of $W(G,N)(F)$,
which we denote by $\theta_{w}$ (resp. $\theta_{w}^{\vee}$).
Let $\mathfrak{w}_N$ be a Whittaker datum,
and assume that $\mathfrak{w}_N$ arises from the same pinning
by the usual construction.
Then $\theta_{w}(\mathfrak{w}_{N}) = \mathfrak{w}_{N}$.
Writing $\Phi^\cE_\phi(G,\mc{N})\subseteq\Phi^\cE_\phi$
for those rigid enhancements having first component~$\mc{N}$,
we find a bijection
\begin{equation} \label{transisom}
\Phi^\cE_\phi(G,\mc{N}) \longisoto 
\frac{\Phi^{\cE,\tn{bas}}_{\phi_{N,\pm}}(N_\pm)}{W(G,N)(F)}.
\end{equation}
Strictly speaking, it should be checked
that this bijection is independent of
the choice of representative of~$\phi$
within its equivalence class
and the identification of $\widehat{N}$ with~$\mc{N}$,
and we leave these checks to the reader.

Given a discrete $L$-parameter~$\phi$,
let $H^1_\tn{L-reg}(\cE,G)_\phi$ (resp.~$\ov{Y}_{+,\tn{tor}}(G)_\phi$)
be the union of the sets $H^1_\tn{reg}(\cE,G)_N$
(resp.~$\ov{Y}_{+,\tn{tor}}(G)_N$)
where $N$ ranges over stable conjugacy classes of
elliptic twisted Levi subgroups of~$G$
corresponding to $\phi$-stable Levi subgroups of~$\widehat G$.
We can now state the main result of this paper:

\begin{Theorem}\label{maintheorem2}
Let $G$ be a quasi-split connected reductive group and
$\phi$ a discrete $L$-parameter for~$G$.
Assume that the basic rigid local Langlands conjecture
(Conjecture \ref{rigidLLC1}) holds for $G$
and each of its quasi-split elliptic twisted Levi subgroups.
Then there is a bijection
\begin{equation}\label{rigidLLC2}
\iota^\tn{rig}_\mathfrak{w}\colon
\bigsqcup_{[N]} \frac{\Pi^\cE_\phi(N_\pm)}{W(G,N)(F)}
\longrightarrow \Phi^\cE_\phi(G) 
\end{equation}
where $[N]$ ranges over stable conjugacy classes of
elliptic twisted Levi subgroups of~$G$
corresponding to a $\phi$-stable Levi subgroup of~$\widehat G$.
The bijection fits into a commutative diagram
\begin{equation*}
\begin{tikzcd}%
\displaystyle\bigsqcup_{[N]} \frac{\Pi^\cE_\phi(N_\pm)}{W(G,N)(F)}
\rar[twoheadrightarrow]\dar{\iota^\tn{rig}_\mathfrak{w}}
& H^1_\tn{L-reg}(\cE,G)_\phi \dar{\ref{newtondecomp},\, \ref{TNgphireg}}[swap]{\sim}
\rar[hookrightarrow]{\kappa}
& \displaystyle\ov{Y}_{+,\tn{tor}}(G)_\phi
\dar[hookrightarrow]{\ref{dualKott1}} \\
\Phi^\cE_\phi(G) \rar[twoheadrightarrow]
& \displaystyle\bigsqcup_{[N]}
\frac{X^{*}(\pi_{0}(Z(\widehat{N})^{\Gamma,+}_{(\widehat N)}))_\Gphireg}{W(G,N)(F)}
\rar[hookrightarrow]
& \displaystyle\bigsqcup_{[N]} \frac{X^{*}(\pi_{0}(Z(\widehat{G})^{\Gamma,+}_{(\widehat N)}))}{W(G,N)(F)} 
\end{tikzcd}
\end{equation*}
where the top left horizontal map forgets the representation,
the bottom left horizontal map forms the central character as in \eqref{defrigenh},
and the vertical maps are obtained via maps from the numbered results
cross-referenced in their labels.
\end{Theorem}

\begin{proof}
The commutativity of the left square follows from \eqref{transisom}
and Theorem~\ref{rigidLLC1pm}, which depends on Conjecture \ref{rigidLLC1}.
To check the commutativity of the right square,
we restrict it to the diagram
\[
\begin{tikzcd}%
H^1_\tn{L-reg}(\cE,G)_N \dar[swap]{\sim}
\rar[hookrightarrow]
& \ov{Y}_{+,\tn{tor}}(G)_N \rar[hookrightarrow]
& \wt{Y}_{+,\tn{tor}}(Z(N)\to G) \dar{\sim} \\
\dfrac{X^{*}(\pi_{0}(Z(\widehat{N})^{\Gamma,+}_{(\widehat N)}))_\Gphireg}{W(G,N)(F)}
\arrow[rr,hookrightarrow]
&& \dfrac{X^{*}(\pi_{0}(Z(\widehat{G})^{\Gamma,+}_{(\widehat N)}))}{W(G,N)(F)} 
\end{tikzcd}
\]
This diagram commutes by the functoriality of Proposition~\ref{dualKott1}.
\end{proof}

\begin{Remark}%
Using the map
$\bigsqcup_{[N]} \Pi_\phi^\cE(N_\pm)/W(G,N)(F)
\onto H^{1}_{\tn{L-reg}}(\mc{E}, G)_\phi$,
we can further group elements of $\Phi^\cE_\phi$
by their images in $H^{1}(\mc{E}, G)$
and rewrite Theorem \ref{maintheorem2} as a bijection
\[
\bigsqcup_{[x] \in H^{1}_{\tn{L-reg}}(\mc{E},G)_{\phi}}
\frac{\Pi_{\phi_{N,\pm}}^\cE(G_{[x]}, [x])}{W(G,N)(F)}
\longisoto \Phi_\phi^\cE(G)
\]
where for $[x']$ an arbitrary preimage of $[x]$ in $H^{1}_{\tn{bas}}(\mc{E}, N)$,
the set $\Pi_{\phi_{N,\pm}}(G_{[x]}, [x'])$
of isomorphism classes in $\Pi_\phi^\cE(N_\pm)$
with a representative whose rigidification is cohomologous to $[x']$.
Moreover, every twisted Levi subgroup $N$ of $G$ such that $\phi$
factors through $^{L}N_{\pm} \to \prescript{L}{}G$
arises from a class in $H^{1}_\tn{reg}(\cE, G)_\phi$.
In other words, the map \eqref{rigidLLC2} does not miss
any twisted Levi subgroup of $G$ or inner forms thereof.
\end{Remark}

\section{Examples}\label{examplessec}
\subsection{An irregular cohomology class}

\begin{Lemma} \label{PGL2subgroup}
Let $G=\PGL_2$ over a field~$F$ not of characteristic two.
\begin{enumerate}
\item
There is a subgroup $A$ of~$G$ that is isomorphic to~$\mu_2^2$
and is not contained in any torus of~$G$.

\item
Any two such subgroups~$A$ are $G(\overline F)$-conjugate.

\item
The subgroup $A$ normalizes exactly three maximal tori of~$G$,
those of the form $Z_G^\circ(s)$ for $s\in A$ of order two.

\item
$Z_G(A) = A$.

\item
If $\sqrt{-1}\in F$ then $N_G(A)/A \simeq S_3 \simeq \GL_2(\bbF_2)$
and $N_G(A)$ is generated by the groups $T[4]$
where $T$ is a torus normalized by~$A$.
\end{enumerate}
\end{Lemma}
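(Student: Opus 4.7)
I would begin by realizing $A$ concretely as the image in $\PGL_2$ of the subgroup of $\GL_2$ generated by $\tn{diag}(1,-1)$ and $\smat0110$. Writing $s_1, s_2, s_3$ for its three nontrivial involutions (with $s_3 = s_1 s_2$), the subgroup $A$ is visibly defined over $F$ and abstractly isomorphic to $\mu_2^2$. For part~(1), no torus of $\PGL_2$ contains $A$: every maximal torus is one-dimensional, so its group of order-$2$ elements has only two members, too few to accommodate $A$. For part~(2), given another such $A'$, I would conjugate one of its involutions over $\overline F$ so that it becomes $s_1$, then note that a second involution of $A'$ commutes with $s_1$ in $\PGL_2$ without equalling it, so it lifts to an antidiagonal matrix in $\GL_2$; conjugation by an appropriate element of the diagonal torus (which centralizes $s_1$) then moves it to $s_2$, and the third is forced to be $s_3$.

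For part~(3), any maximal torus $T$ normalized by $A$ produces a homomorphism $A \to W(G,T) \cong \Z/2$. Its kernel cannot be all of $A$, since then $A \subseteq Z_G(T) = T$ would contradict~(1); so the kernel has order~$2$, and its nontrivial element lies in $T$ and so equals some $s_i$. This forces $T = Z_G^\circ(s_i)$. Conversely each of the three involutions produces such a normalized torus $T_i := Z_G^\circ(s_i)$, and the three $T_i$ are distinct because $A \cap T_i = \{1, s_i\}$ has a single nontrivial element.

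For part~(4), $Z_G(A)^\circ$ is reductive (since $A$ consists of semisimple elements) and can contain no maximal torus, because such a torus would have to contain $A$ in violation of~(1). Hence $Z_G(A)^\circ$ is trivial and $Z_G(A)$ is finite. To get the precise equality $Z_G(A) = A$, I would directly intersect $Z_G(s_1)$ with $Z_G(s_2)$ at the matrix level: each factor is the normalizer of a torus (diagonal matrices together with antidiagonal matrices in the case of $s_1$), so one works through the four combinations of cosets and reads off that the intersection is exactly $\{1, s_1, s_2, s_3\}$.

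Finally, for part~(5) the hypothesis $\sqrt{-1} \in F$ ensures that every $T_i$ is $F$-split (only $T_3 = Z_G^\circ(s_3)$ really needs this, since $s_3$ has eigenvalues $\pm i$), so each $T_i[4]$ is cyclic of order~$4$. Any order-$4$ element of $T_i[4]$ squares to $s_i$ and thus lies in $N_G(A)$; a direct computation in $T_1[4]$ shows its conjugation action on $A$ fixes $s_1$ and swaps $s_2$ with $s_3$, and symmetric statements hold for $i=2,3$. The three resulting transpositions generate $\Aut(A) \cong S_3$, which both proves that the natural injection $N_G(A)/A \hookrightarrow \Aut(A)$ is an isomorphism and shows that $\bigcup_i T_i[4]$ generates $N_G(A)$: it surjects onto $N_G(A)/A$ and contains $A$ itself, since each $s_i$ lies in $T_i[2] \subseteq T_i[4]$. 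The only step requiring genuine calculation rather than formal reasoning is the matrix intersection in~(4); everything else is essentially bookkeeping once the combinatorics of the three involutions is in place.
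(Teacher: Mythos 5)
Your proof is correct, and the overall strategy — pin down a concrete representative of $A$ by explicit matrices, then compute — agrees with the paper's. The main divergences are in parts (2) and (3). For (2), the paper observes that $s\mapsto Z_G^\circ(s)$ gives a bijection between order-two elements of $\PGL_2$ and maximal tori (with inverse extracting the unique $2$-torsion point of the torus), so that any involution sits inside exactly one copy of $\mu_2^2$, and then invokes conjugacy of all maximal tori; you instead perform the conjugation by hand, moving one involution to $s_1$ and then using the centralizer of $s_1$ (which acts on antidiagonal involutions by squares, hence transitively over $\overline F$) to move the second to $s_2$. Both are fine; the paper's version has the advantage of making the uniqueness statement in part (3) nearly automatic. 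For (3), the paper passes to the geometric model where maximal tori correspond to unordered pairs in $\bbP^1$ and $A$ acts by the Möbius transformations $z\mapsto 1/z$ and $z\mapsto -z$, then just reads off the three fixed pairs; your argument via the homomorphism $A\to W(G,T)\cong\Z/2$ and its kernel $A\cap T$ is more structural, avoids the explicit model, and would generalize more readily (it is essentially the argument the paper itself uses later for $G_2$ in Lemma~\ref{G2subgroup}). Parts (1), (4), and (5) are essentially identical in substance; your remark about where $\sqrt{-1}\in F$ enters (needed to split $T_3 = Z_G^\circ(s_1 s_2)$ and supply order-$4$ points) makes a dependency explicit that the paper leaves tacit.
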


If $\sqrt{-1}\notin F$ then $N_G(A)/A$
will instead be some form of~$S_3$.
One can further show that $N_G(A)$ is a form of~$S_4$,
generalizing the classical geometry of the tetrahedron
inscribed in the (Riemann) sphere.

\begin{proof}
For the first part, to start with,
note that every maximal torus~$T$ of~$\PGL_2$
is of the form $\Res_{E/F}(\bbG_m)/\bbG_m$,
where $E/F$ is a separable quadratic extension,
and the normalizer of this torus is $T\rtimes\Gal(E/F)$.
The subgroup of $N_G(T)$ generated by the two-torsion element of~$T$
and $\Gal(E/F)$ is the desired copy of $\mu_2^2$ in~$\PGL_2$.
Since every torus of $\PGL_2$ has rank one,
and thus contains a unique point of order two,
which is an $F$-point,
this subgroup is not contained in a torus.

For the second part, a simple matrix calculation shows that the centralizer in $\PGL_2$
of the order-two element $\smat1{}{}{-1}$ is the normalizer of the diagonal maximal torus.
It follows that the assignment $s\mapsto Z_G^\circ(s)$
is a bijection between order-two elements of~$\PGL_2$ and maximal tori of~$\PGL_2$;
the inverse sends a torus to its unique element of order two.
Consequently, there is a unique way to enlarge any order-two element of $\PGL_2(F)$
to a copy of $\mu_2^2$.
The fact that all tori of $\PGL_2$ are $G(\overline F)$-conjugate
implies that all copies of $\mu_2^2$ are $G(\overline F)$-conjugate.

For the third part, it is clear that $A$ normalizes the given tori,
so the problem is to show that no more are normalized.
For this, we may assume that $A$ is generated by
$s=\smat{}11{}$ and $t=\smat1{}{}{-1}$.
The space of maximal tori is isomorphic to the variety $X$
of two-element subsets of~$\bbP^1$
and the conjugation action on tori
becomes the action of $\PGL_2$ on $\bbP^1$
by Möbius transformations.
In this model, $s$ corresponds to the transformation $1/z$
and $t$ to the transformation $-z$.
A short calculation shows that the only points of~$X$
fixed by this group of Möbius transformations
are $\{0,\infty\}$, $\{\pm1\}$ and $\{\pm\sqrt{-1}\}$.

For the fourth part, working again in the setting of the previous paragraph,
$Z_G(A)$ is the centralizer of $s$ in $Z_G(t)$,
which one easily computes to be~$A$.
Take $a\in Z_G^\circ(t)$ of order~$4$, so that $a^2 = t$.
Then $a$ commutes with $t$ and $asa^{-1} = a^2s = ts$,
so that $a$ realizes the permutation $(s,ts)(t)$
of the $3$-element set $\{s, t, ts\}$.
Replacing $s$ by $t$ and $ts$ in turn,
one finds the other two $2$-cycles in~$S_3$.
\end{proof}

\noindent
Now we return to the setting of the paper,
where $F$ is nonarchimedean local of characteristic zero.

\begin{Lemma} \label{nontoralcocycle}
Let $G=\PGL_2$ over~$F$.
Assume $\sqrt{-1}\in F$.
\begin{enumerate}
\item
Any form of $\mu_2^2$ that splits over an unramified extension of~$F$
embeds as a subgroup of~$G$.

\item
There is a cocycle $z\in Z^1_\tn{reg}(\mc{E},G)$
and a form~$A$ of $\mu_2^2$ in~$G$
such that the image of $z|_u$ is~$A$.

\end{enumerate}
\end{Lemma}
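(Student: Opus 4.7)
For part~(1), the unramified forms of $\mu_2^2$ over~$F$ are classified by conjugacy classes in $S_3 = \tn{Aut}(\mu_2^2)$ via the image of Frobenius. Under the assumption $\sqrt{-1}\in F$, Lemma~\ref{PGL2subgroup}(5) gives $N_G(A_0)(F) = S_4$ for $A_0 \subset G$ a fixed $F$-split copy of $\mu_2^2$, and the extension $1 \to A_0 \to S_4 \to S_3 \to 1$ splits via the stabilizer of a vertex of the tetrahedron. For any unramified $\phi\colon \Gamma \to S_3$, composing with the section yields a homomorphism $\tilde\phi\colon \Gamma \to S_4 \subset \PGL_2(F)$, which is automatically a $1$-cocycle because $\Gamma$ acts trivially on $\PGL_2(F)$. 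I would show that $[\tilde\phi]\in H^1(F,G) = \tn{Br}(F)[2]$ is trivial by the following lift: pick any preimage $\psi(\tn{Frob})\in \GL_2(F)$ of $\tilde\phi(\tn{Frob})$, which exists because $\PGL_2(F) = \GL_2(F)/F^\times$ (Hilbert~90 applied to the center of $\GL_2$), and extend multiplicatively to a homomorphism $\psi\colon \Gamma \to \GL_2(F)$. The cocycle $\psi$ has class in $H^1(F, \GL_2) = 0$ by Hilbert~90, so $\tilde\phi$ is a coboundary $\tilde\phi(\sigma) = g\cdot\sigma(g)^{-1}$ for some $g\in G(\ov F)$. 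The subgroup $A' := g^{-1} A_0 g$ is then $F$-rational, and transporting the ambient $\Gamma$-action back to $A_0$ via $\tn{Ad}(g)$ yields conjugation by $\tilde\phi(\sigma)$, realizing the prescribed form.

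For part~(2), I would take for~$A$ the form of $\mu_2^2$ on which $\Gamma$ acts through an unramified Frobenius as a $3$-cycle in $S_3 \subset \tn{GL}_2(\bbF_2)$; by part~(1) this form embeds in~$G$. The plan is to (a)~produce an $F$-rational surjection $f\colon u \twoheadrightarrow A$ and (b)~extend it to a $1$-cocycle $z \in Z^1(\cE, A)\subset Z^1(\cE,G)$. For~(a), let $K/F$ be the unramified cubic extension and choose $k$ with $E_k = K$ and $n_k$ even; then $X^*(u_k)\otimes_{\Z} \bbF_2 \simeq \bbF_2[\Gamma_{K/F}]_0$, the augmentation ideal of $\bbF_2[\Z/3]$. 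Since $\bbF_2[\Z/3]$ is semisimple and isomorphic as a ring to $\bbF_2 \oplus \bbF_4$, this augmentation ideal is the unique $2$-dimensional faithful irreducible representation of $\Z/3$ over $\bbF_2$; but $X^*(A)$ is the same representation. Any $\Z/3$-equivariant isomorphism between them dualizes via Cartier duality to the desired surjection $u_k \twoheadrightarrow A$, inducing $f$.

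For~(b), the obstruction to extending $f$ to a cocycle of $\cE$ is the transgression $f_*(\xi)\in H^2(F, A)$ of the gerbe class $\xi \in H^2(F, u)$, arising from the Hochschild--Serre sequence for $1 \to u \to \cE \to \Gamma \to 1$. Since $A = \bbF_2^2$ with $\Gamma$ acting via a $3$-cycle has no nonzero $\Gamma$-fixed vectors, $A^\Gamma = 0$; moreover, the Cartier dual $A^\vee$ is isomorphic to~$A$ as a $\Gamma$-module, because the two $3$-cycles in $\tn{GL}_2(\bbF_2)$ are conjugate, so the contragredient of the $2$-dimensional faithful irreducible representation of $\Z/3$ over $\bbF_2$ is itself. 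Local Tate duality then gives
\[
H^2(F, A) \simeq H^0(F, A^\vee)^\vee = (A^\Gamma)^\vee = 0,
\]
so $f$ lifts to $z\in Z^1(\cE, A)$; composing with $A\hookrightarrow G$ produces the desired cocycle, whose restriction $f_z$ has image exactly~$A$. The crux of the argument---and the main obstacle that forces the particular choice of form---is this vanishing of $H^2(F,A)$: it fails for both the split and transposition forms (each has nonzero $\Gamma$-fixed vectors in $\bbF_2^2$), so the $3$-cycle form is the natural choice for both the construction of $f$ and the unobstructed extension to~$\cE$.
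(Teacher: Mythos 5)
Your proof of part (2) is correct and takes a genuinely different route from the paper. Where the paper identifies the image of $H^1(\cE,A)$ in $\Hom_F(u,A)$ with the kernel of the coinvariant map by citing \cite{Dillery24}, you work directly with the Hochschild--Serre transgression: the obstruction to extending $f\colon u\to A$ to a cocycle of $\cE$ is $f_*(\xi)\in H^2(F,A)$, and this group vanishes because local Tate duality identifies it with $(A^{\vee,\Gamma})^\vee$, the Frobenius-as-$3$-cycle module $A$ is self-Cartier-dual, and $A^\Gamma=0$. That is a clean, self-contained argument that also explains, as you note, why the $3$-cycle form is forced. Your construction of the $F$-rational surjection $u\to A$ via character modules is also fine, though the paper's observation that $A$ is literally one of the $u_k$'s (so the projection $u\to u_{k_0}=A$ does the job) is more economical.

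Part (1) has a real gap. The step \emph{``pick any preimage $\psi(\tn{Frob})\in\GL_2(F)$ \dots\ and extend multiplicatively to a homomorphism $\psi\colon\Gamma\to\GL_2(F)$''} is not automatic: $\Gamma$ (or rather $\Gamma_{K/F}\simeq\Z/m$) is generated by $\tn{Frob}$ subject to the relation $\tn{Frob}^m=1$, so a homomorphism is determined by $\psi(\tn{Frob})$ \emph{together with} the constraint $\psi(\tn{Frob})^m=1$. A lift of a finite-order element of $\PGL_2(F)$ to $\GL_2(F)$ generically has infinite order: if $M$ lifts $\tilde\phi(\tn{Frob})$ then $M^m=cI$ for some $c\in F^\times$, and a finite-order rescaling $\lambda M$ exists iff $c$ can be corrected to a root of unity by an $m$-th power, which fails in general. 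Indeed, the existence of a homomorphic lift $\psi$ is essentially equivalent to the vanishing of the Brauer obstruction $[\tilde\phi]\in H^1(F,\PGL_2)\hookrightarrow\tn{Br}(F)$, i.e.\ to exactly what you are trying to prove; so as written the argument is circular. The gap can be repaired: because $\sqrt{-1}\in F$ forces $N_G(A)$ into $\PGL_2(\cO_F)$, one can choose $M\in\GL_2(\cO_F)$, so that $c\in\cO_F^\times$ is a unit, hence a norm from the unramified extension $K$, hence the cyclic algebra $(K/F,\tn{Frob},c)$ is split. But this is precisely the content of Lang's theorem packaged differently; the paper's route---observing $n$ lies in the smooth integral model $\cG$ and invoking $H^1(F^{\tn{unr}}/F,\cG)=1$---is the cleaner way to say it and avoids any reference to $\GL_2$.
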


\begin{proof}
For the first part,
it suffices to show that for any element $n\in N_G(A)$
there is $g\in \PGL_2(F^\tn{unr})$ such that
$g^{-1}\cdot\tn{Fr}(g) = n$;
then the conjugate $g^{-1} A g$ of~$A$
is defined over~$F$ and has Galois action sending~$\tn{Fr}$
to the image of $n$ in $\Aut(A) = S_3$.
For this, let $\cal G$ be the standard maximal parahoric integral model of~$\PGL_2$,
so that $\cal G(\cal O_F)$ is the elements of $\PGL_2(\cal O)$
whose determinant has even valuation.
Then $\smat1{}{}{-1}$ extends to a copy~$A$ of $\mu_2^2$ contained in~$\cal G$,
and since $\sqrt{-1}\in F$,
the three tori normalized by~$A$ are split.
Hence $N_G(A)$ embeds in $\cal G$.
The existence of $g$ now follows from the facts that $H^1(F^\tn{unr}/F,\cal G)=1$
and that $n$ is a finite-order element of $\cal G(\cal O_F)$.

For the second part, let $E/F$ be an unramified cubic extension of~$F$
and take $A = \Res_{E/F}(\mu_2)/\mu_2$.
Let $\widehat A = \Hom(X^*(A),\bbQ/\bbZ)$,
isomorphic to $\widehat A\simeq \bbF_2^2$ with Frobenius action $\smat1101$.
As \cite[\S~5]{Dillery24} explains,
we may identify $\Hom_F(u,Z)$ with $\widehat A$
and the image of $H^1(\cal E,A)$ in $\Hom_F(u,Z)$
with the kernel of the map from $\widehat A$ to its Galois coinvariants.
Since $\widehat A$ has trivial Galois coinvariants,
every element of $\Hom_F(u,Z)$, in particular,
the natural projection $u\to A$, extends to a cocycle~$z$.
By construction, the image of $z|_u$ is~$A$.
\end{proof}

\subsection{Rigid Newton centralizers that are not elliptic twisted Levi subgroups}
\begin{Example} \label{exnotell}
Let $G = \SL_{3}$, which contains $\GL_{2}$ as a maximal proper Levi subgroup in the usual way. For $K/F$ a quadratic extension we have the anisotropic norm-$1$ torus $S\defeq \tn{Res}_{K/F}(\mathbb{G}_{m})^{(1)} \subseteq \GL_{2} \subseteq \SL_{3}$ and we claim that $Z_{\SL_{3}}(S) = \tn{Res}_{K/F}(\mathbb{G}_{m}) \subseteq \GL_{2} \subseteq \SL_{3}$. Evidently $Z_{\SL_{3}}(S)$ is a twisted Levi subgroup of $G$ and contains $\tn{Res}_{K/F}(\mathbb{G}_{m})$; it is easy to see that, over $\ov{F}$, there are only three such Levi subgroups: $\tn{Res}_{K/F}(\mathbb{G}_{m})$ itself, $\GL_{2}$, or $\SL_{3}$. Since $S$ is not central in $\GL_{2}$, the claim follows. 

We may then pick a cocharacter $\lambda \in X_{*}(\tn{Res}_{K/F}(\mathbb{G}_{m})^{(1)})$ with $Z_{\SL_{3}}(\lambda) = Z_{\SL_{3}}(S) = \tn{Res}_{K/F}(\mathbb{G}_{m})$; since $\lambda$ factors through an anisotropic torus, it is evidently killed by the $\Gamma$-norm, and it follows from the proof of Theorem \ref{twistedLevithm} that $Z_{\SL_{3}}(\lambda) = \tn{Res}_{K/F}(\mathbb{G}_{m})$ is a rigid Newton centralizer. However, it evidently does not contain an elliptic maximal torus of $G$, since it is a non-elliptic maximal torus. 
\end{Example}

In fact, the situation is even more general than the above example suggests: There are rigid Newton centralizers in $G$ which are not twisted Levi subgroups. Indeed, if $T$ is an anisotropic maximal torus of $G$ and $s \in T[n](F)$ is a torsion element such that $Z_{G}(s)$ is not a twisted Levi subgroup, then $Z_{G}(s)$ is such a subgroup, since one can extend the homomorphism $\mu_n \to T$ sending a choice of $n$th root of unity to $s$ to a cocharacter $\lambda\colon \mathbb{G}_{m,\ov{F}} \to T_{\ov{F}}$ (via the canonical inclusion $\mu_n \hookrightarrow T$). The fact that $T$ is anisotropic guarantees that $\lambda$ is killed by $N_{E/F}$, and hence by Proposition \ref{Newtonimageprop} the group $Z_{G}(\lambda|_{\mu_n}) = Z_{G}(s)$ is defined over $F$ and is thus
regular. The following example gives such an element $s$.

\begin{Example}\label{nonadmex}
Let $G = G_{2}$ with a choice of $T$ an anisotropic maximal torus split over a quadratic extension $E/F$ on which $\Gamma_{E/F}$ acts by inversion---in particular, we have $T[2](\ov{F}) = T[2](F)$. Let $a,b$ be the short and long (respectively) elements of a root basis corresponding to a choice of Borel subgroup containing $T_{\ov{F}}$, with corresponding fundamental coweights $u,v$ (respectively); the claim is that if we set $s = v(-1) \in T[2](F)$ then $Z_{G_{2}}(s)$ is not a twisted Levi subgroup of $G_{2}$.

For any root $\alpha \in R(G_{2,\ov{F}}, T_{\ov{F}})$, one checks that $\alpha(s) = (-1)^{n}$, where $n$ is the coefficient modulo $2$ of $a$ in $\alpha$. The only such $\alpha$ which are positive and have trivial $a$-coefficients modulo $2$ are $b$ or $2a + b$, which span a root system of type $A_{1} \times A_{1}$, and so we deduce that $Z_{G_{2}}(s)_{\ov{F}}$ is isomorphic to $\tn{SO}_{4}$, giving the claim.
\end{Example}

\subsection{A strongly regular depth-zero $L$-parameter normalizing two tori}

\begin{Lemma} \label{G2subgroup}
Let $G=G_2$ over an algebraically closed field~$F$ not of characteristic two.
\begin{enumerate}
\item
There is a unique $G(F)$-conjugacy class of subgroups~$A$ isomorphic to~$\mu_2^3$.

\item
$Z_G(A) = A$ and $N_G(A)/A\simeq\Aut(A)\simeq\GL_3(\bbF_2)$.

\item
There is a unique $N_G(A)$-conjugacy class of subgroups~$B$ of~$N_G(A)$
that contain~$A$ and have order~$168$.

\item
The group~$B$ does not normalize a maximal torus of~$G$.
\end{enumerate}
\end{Lemma}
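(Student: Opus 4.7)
The plan is to construct $A$ by hand inside the normalizer of a maximal torus, to compute the centralizer and normalizer by reducing to Weyl-group arithmetic, and finally to rule out $B$ normalizing a maximal torus by exploiting the structure of $\GL_3(\bbF_2)$. For part~(1), fix a maximal torus $T\subset G$ and let $w_0\in W$ be the longest Weyl element, which acts as~$-1$ on $X^*(T)$. Since $G_2$ is simply connected, the half-sum of positive coroots $\rho^\vee$ lies in $X_*(T)$, so in characteristic not~$2$ the standard Tits-section computation $n_{w_0}^2 = \prod_{\alpha>0}\alpha^\vee(-1) = (2\rho^\vee)(-1) = \rho^\vee(1) = 1$ produces an involutive lift $n_0\in N_G(T)$ of~$w_0$. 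Because $w_0$ acts as~$-1$ and hence trivially on $T[2]$, the element $n_0$ centralizes $T[2]$, and since $n_0\notin T$ we obtain $A := T[2]\langle n_0\rangle\cong\mu_2^3$. Uniqueness up to $G(F)$-conjugation follows from the uniqueness of the $G$-conjugacy class of involutions in $G_2$, whose connected centralizer is $\tn{SO}_4$, together with the uniqueness of $\mu_2^2$ up to conjugation inside $\tn{SO}_4$.

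For part~(2), every root of $G_2$ is primitive in $X^*(T)$, so no root vanishes on all of $T[2]$ and therefore $Z_G(T[2])^\circ = T$. The component group equals the kernel of the natural map $W\to\GL(T[2]) = \GL_2(\bbF_2)$; the two simple reflections of $G_2$, reduced modulo~$2$, generate $\GL_2(\bbF_2) = S_3$, so the kernel has order~$2$ and is $\{1,w_0\}$. Thus $Z_G(T[2]) = T\langle n_0\rangle$, and intersecting further with $Z_G(n_0)$ reduces the $T$-factor to the $n_0$-fixed points $T^{n_0} = T[2]$, yielding $Z_G(A) = A$. This gives an embedding $N_G(A)/A\hookrightarrow\Aut(A) = \GL_3(\bbF_2)$; surjectivity is a classical structural fact, which can be deduced by exhibiting an order-$7$ Singer cycle coming from the maximal $\tn{PSL}_2(\bbF_7)$-subgroup of $G_2$ together with the $S_3\cong\GL_2(\bbF_2)$ coming from the Weyl action on $T[2]$, and noting that the two maximal subgroup classes of the simple group $\GL_3(\bbF_2)$ have orders $21$ and $24$, so the only subgroup containing both an order-$7$ element and a copy of $S_3$ is $\GL_3(\bbF_2)$ itself.

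For part~(3), a subgroup $B\subseteq N_G(A)$ of order~$168$ containing~$A$ corresponds to a subgroup $B/A\subseteq\GL_3(\bbF_2)$ of order~$21$. In $\GL_3(\bbF_2)\cong\tn{PSL}_2(\bbF_7)$ the subgroups of order~$21$ are exactly the normalizers of the Sylow $7$-subgroups, all isomorphic to the Frobenius group $\bbZ/7\rtimes\bbZ/3$, and they form a single conjugacy class; pulling back then gives the uniqueness of $B$ up to $N_G(A)$-conjugacy.

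The main obstacle is part~(4). Suppose for contradiction that $B$ normalizes a maximal torus $T'$ of $G$. Since the Weyl group $W(G,T')$ has order~$12$, the image of~$B$ there has order at most~$12$, forcing $|B\cap T'|\geq 168/12 = 14$. Because $B\cap T'$ is abelian, its image in $B/A\cong\bbZ/7\rtimes\bbZ/3$ is an abelian subgroup of this Frobenius group, hence of order at most~$7$; since the kernel of this map is $A\cap T'$, we obtain $|A\cap T'|\geq 14/7 = 2$. Now pick an order-$7$ element $b\in B$ via Cauchy's theorem; since $W(G,T')$ has no element of order~$7$, necessarily $b\in T'$. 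The conjugation action of~$b$ on~$A$ is an element of $\Aut(A) = \GL_3(\bbF_2)$ of order dividing~$7$. If this action is trivial, then $b\in Z_G(A) = A$, contradicting that $A$ has exponent~$2$ while~$b$ has order~$7$. If the action has order~$7$, then~$b$ acts on~$A$ as a Singer cycle, fixing only the identity element, but this contradicts the fact that~$b$ commutes with the nontrivial subgroup $A\cap T'\subseteq T'$. Either way we obtain a contradiction. The delicacy of this step lies in having to exploit simultaneously the arithmetic of $|B| = 168$, the Frobenius structure of $B/A$, and the Singer-cycle action of a $7$-element on~$A$.
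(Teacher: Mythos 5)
Your proof is correct and follows the same overall route as the paper's (construct $A$ in $N_G(T)$, compute $Z_G(A)$ by Weyl-group arithmetic, reduce $N_G(A)/A=\GL_3(\bbF_2)$ to a group-theoretic fact, and derive a contradiction in part (4) from the free action of a $7$-element on $A\setminus\{1\}$), with a few local variations worth noting. In part (1) the paper's uniqueness argument invokes Steinberg to place any $\mu_2^2\subseteq A$ inside a torus and then argues about the third generator; you instead conjugate a single involution and pass to its centralizer $\tn{SO}_4$. Your route is legitimate, but the phrase ``uniqueness of $\mu_2^2$ up to conjugation inside $\tn{SO}_4$'' is not quite the fact you need: what is required is that all $\mu_2^3$-subgroups of $\tn{SO}_4$ containing the central involution form a single conjugacy class (and in particular that the preimage in $\tn{SO}_4$ of a given $\mu_2^2\subset\tn{PSO}_4$ is elementary abelian rather than $D_4$ or $Q_8$); that step deserves to be said. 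In part (2) your derivation of $Z_G(A)=A$ is an explicit and clean spelling-out of what the paper only asserts follows ``from the description of $A$'': the observation that the roots of $G_2$ are primitive (so $Z_G(T[2])^\circ=T$), that the mod-$2$ reduction $W\to\GL_2(\bbF_2)$ is surjective with kernel $\{1,w_0\}$, and that $T^{n_0}=T[2]$, is a nice self-contained argument. For the surjection $N_G(A)/A\twoheadrightarrow\GL_3(\bbF_2)$ both proofs lean on an external fact: the paper cites the octonion multiplication-table description of $N_G(A)$ (Coxeter), whereas your maximal-subgroup-order argument is clean once you have both a copy of $S_3$ (from the Tits lifts, which do normalize $A$ because the Tits cocycle is $T[2]$-valued and $w_0$ is central in $W$) and an order-$7$ element; but the existence of the order-$7$ element in $N_G(A)/A$ is precisely the content being smuggled in from ``the maximal $\tn{PSL}_2(\bbF_7)$-subgroup of $G_2$'' and would need a citation on par with the paper's. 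Parts (3) and (4) are substantively the same; your count in (4) via $|B\cap T'|\ge 14$ and the abelian image in the Frobenius group is correct but more roundabout than the paper's direct observation that $8\nmid 12$ already forces $A\cap T'\neq 1$, and your minimal-polynomial argument for simple transitivity of the order-$7$ element on $A\setminus\{1\}$ is a good elementary replacement for the paper's $\bbF_8^\times$-identification.
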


\begin{proof}
For the first part, any subgroup isomorphic to $\mu_2^2$
is contained in a maximal torus~$T$ \cite[Theorem~2.27]{Steinberg75}.
Since $N_G(T) \simeq T \rtimes W(G,T)$ \cite[Proposition 3.17]{Adrian22},
a lift $s$ of $-1\in W(G,T)$ of order two
together with the group $T[2]\simeq\mu_2^2$
generates a subgroup isomorphic to~$\mu_2^3$.

For the second part,
the description of $A$ given above shows that $Z_G(A) = A$.
It follows that $N_G(A)/A\subseteq\Aut(A)$.
To show that this inclusion is an equality,
one can use the interpretation of~$G_2$
as the automorphism group of the octonions and describe $N_G(A)$
as the automorphisms of the multiplication table of the octonions \cite{Coxeter46}.

For the third part, since $\GL_3(\bbF_2)$ has order~$168 = 8\cdot 3\cdot 7$,
it contains a unique conjugacy class of subgroups of order~$7$,
the Sylow $7$-subgroups, which can be identified with the maximal tori $\bbF_8^\times$.
Using the general description of normalizers of elliptic tori in~$\GL_n$
one sees that the normalizer of an $\bbF_8^\times$
is a semidirect product $\bbF_8^\times\rtimes\Gal(\bbF_8/\bbF_2)$, of order~$3\cdot 7$.

For the fourth part, suppose $B$ normalizes a maximal torus~$T$ of~$G$
and let $C$ be a Sylow $7$-subgroup of~$B$.
Since $W(G,T)$ is the dihedral group of order~$12$,
the image of~$C$ in $W(G,T)$ is trivial,
so that $C\subseteq T$.
The same argument shows that there is a subgroup $A'\subseteq A$,
of order at least two, such that $A'\subseteq T$, as $8\nmid12$.
Hence some nontrivial subgroup of~$A$ commutes with~$C$.
At the same time, recalling from the third part
that the conjugation action of~$C$ on~$A$
can be identified with the multiplication action of $\bbF_8^\times$ on~$\bbF_8$,
we see that $C$ acts simply transitively on $A\setminus\{1\}$, a contradiction.
\end{proof}

Let $G=G_2$ and let $\widehat T\subseteq\widehat G=G_2(\bbC)$ be a maximal torus.
Suppose that $q\equiv -1\bmod 6$.
The Weyl group of $G_2$ is the dihedral group of order~$12$,
generated by a reflection and a rotation of order~$6$.
It turns out \cite{Adrian22,MO476119}
that the Weyl group also lifts to~$G_2$,
meaning that there is a homomorphic section
$W(\widehat G,\widehat T)\to N_{\widehat G}(\widehat T)$
of the natural projection.
Let $W\subseteq G_2(\bbC)$ denote the image of some such section.
Define the $L$-parameter $\varphi\colon W_F\to G_2$
to be trivial on wild inertia,
send a generator $s$ of tame inertia to an order-$6$ rotation in~$W$,
and send a Frobenius element $f$ to a reflection in~$W$.
Identify $s$ and $f$ with their images in~$G_2$.
We claim that
\begin{enumerate}
\item
$\varphi$ is strongly regular,
meaning in this case that $\varphi$ is discrete
and $s$ is strongly regular, and

\item
$\varphi$ normalizes two tori, $\widehat T$ and $Z_{\widehat G}(s)$.

\end{enumerate}
Since $G_2$ is simply connected strongly regular is the same as regular
\cite[2.15]{Steinberg75}
and it suffices to show that $s$ is regular.
For this, we use \cite[Lemma~5.2]{Reeder10},
which applies because $s$ is elliptic for~$\widehat T$
and has two orbits on the root system,
the long roots and the short roots.
Hence $s$ is regular.
Since $fsf = s^{-1}$ and $Z_{\widehat G}(s) = Z_{\widehat G}(s^{-1})$,
the torus $\widehat S = Z_{\widehat G}(s)$ is also normalized by~$\varphi$,
proving the second claim.
For the first claim, it remains to show that $\varphi$ is discrete.
Here it is easier to work with $\widehat S$.
It suffices to show that the image of $f$ in $W(\widehat G,\widehat S)$
is the elliptic element $-1$.
Indeed, the element $-1$ takes $s$ to~$s^{-1}$
and since $s$ is strongly regular there is a unique
such element of the Weyl group.

\subsection{Functoriality and the basic Kottwitz map}\label{kottfuncex}
Recall from the work of Kaletha that there is a bijection
$\kappa\colon H^{1}_\tn{bas}(\cE, G) \isoto \ov{Y}_{+,\text{tor}}(Z(G)\to G)$.
In analogy with the Kottwitz map $B(G)\to\pi_1(G)_\Gamma$ for the isocrystal gerbe,
which is bijective on the basic subset, one can ask if there is a map
\begin{equation*}
H^{1}_\tn{reg}(\cE, G) \to \ov{Y}_{+,\text{tor}}(Z(G)\to G)
\end{equation*}
which
\begin{enumerate*}
\item restricts to $\kappa$ on the basic subset and
\item is functorial in $G$.
\end{enumerate*}
Since $\ov{Y}_{+,\text{tor}}(-)$ is not obviously functorial,
it is difficult to imagine how this could be possible.
Here we answer the question in the negative by giving
an explicit example where these two conditions cannot both hold.

 Let $G = \mathrm{SL}_{2}$ with elliptic maximal torus $T$ the norm-one torus $\mathrm{Res}_{E/F}(\mathbb{G}_{m})^{(1)}$ for a quadratic extension $E/F$. One calculates that $H^{1}_\tn{bas}(\cE, \mathrm{SL}_{2}) \isoto \mathbb{Z}/2\mathbb{Z} \isoto \ov{Y}_{+,\text{tor}}(\mu_2\to\mathrm{SL}_{2})$, and we also know from \cite[Corollary 3.7]{Kaletha16a} that every class in $H^{1}_\tn{bas}(\cE, \mathrm{SL}_{2})$ is the image of some class in $H^{1}(\cE, T)$.

Note that $H^{1}(\cE, T) \isoto \ov{Y}_{+,\text{tor}}(T) \isoto X^*(\varprojlim \mu_{2n}(\mathbb{C})) = \bbQ/\bbZ$. If $\ov{Y}_{+,\text{tor}}(Z(G)\to G)$ were functorial in $G$,
then the map $\ov{Y}_{+,\text{tor}}(T) \to \ov{Y}_{+,\text{tor}}(\mu_2\to\mathrm{SL}_{2})$ would be a group homomorphism $\mathbb{Q}/\mathbb{Z} \to \mathbb{Z}/2\mathbb{Z}$, which can only be trivial since the source is divisible and the target is torsion. 
This observation means that the diagram 
\[
\begin{tikzcd}
H^{1}(\cE, T) \arrow{r} \dar{\sim} & H^{1}_\tn{reg}(\cE, \mathrm{SL}_{2})
\dar[twoheadrightarrow] \\
\ov{Y}_{+,\text{tor}}(T) \rar{0} & \ov{Y}_{+,\text{tor}}(\mu_2\to\mathrm{SL}_{2})
\end{tikzcd}
\]
cannot commute and be such that the induced map $H^{1}_\tn{bas}(\cE,\mathrm{SL}_{2}) \to \ov{Y}_{+,\text{tor}}(\mu_2 \to \mathrm{SL}_{2})$ recovers $\kappa$. Indeed, if this were the case then the right-down composition would be nontrivial but the down-right composition is trivial, which is a contradiction.

\printbibliography

\end{document}